

\documentclass[11pt, a4paper, reqno]{amsart}

\usepackage[margin=2.5cm]{geometry}


\usepackage[utf8]{inputenc}
\usepackage[T1]{fontenc}
\usepackage{color}
\usepackage[foot]{amsaddr}
\usepackage{float}

\usepackage[colorlinks, linkcolor=blue]{hyperref}
\usepackage{xurl} 
\hypersetup{breaklinks=true} 


\usepackage[url=false, style=numeric, backend=biber,firstinits=true, maxbibnames=99, maxalphanames=4]{biblatex}
\addbibresource{main.bib}

\DeclareFieldFormat{issn}{}
\setcounter{biburlnumpenalty}{9000}
\setcounter{biburllcpenalty}{9000}
\setcounter{biburlucpenalty}{9000}


\usepackage[capitalize]{cleveref}
\crefname{equation}{}{}

\Crefname{assumption}{Assumption}{Assumptions}

\setcounter{tocdepth}{2}
\newcommand{\nocontentsline}[3]{}
\newcommand{\tocless}[2]{\bgroup\let\addcontentsline=\nocontentsline#1{#2}\egroup}

\makeatletter
\def\@tocline#1#2#3#4#5#6#7{\relax
  \ifnum #1>\c@tocdepth 
  \else
    \par \addpenalty\@secpenalty\addvspace{#2}%
    \begingroup \hyphenpenalty\@M
    \@ifempty{#4}{%
      \@tempdima\csname r@tocindent\number#1\endcsname\relax
    }{%
      \@tempdima#4\relax
    }%
    \parindent\z@ \leftskip#3\relax \advance\leftskip\@tempdima\relax
    \rightskip\@pnumwidth plus4em \parfillskip-\@pnumwidth
    #5\leavevmode\hskip-\@tempdima
      \ifcase #1
       \or\or \hskip 1em \or \hskip 2em \else \hskip 3em \fi%
      #6\nobreak\relax
    \hfill\hbox to\@pnumwidth{\@tocpagenum{#7}}\par
    \nobreak
    \endgroup
  \fi}
\makeatother


\newtheorem{theorem}{Theorem}

\newenvironment{delayedproof}[1]
 {\begin{proof}[\raisedtarget{#1}Proof of \Cref{#1}]}
 {\end{proof}}
\newcommand{\raisedtarget}[1]{%
  \raisebox{\fontcharht\font`P}[0pt][0pt]{\hypertarget{#1}{}}%
}

\newtheorem{proposition}{Proposition}[section]
\newtheorem{lemma}[proposition]{Lemma}
\newtheorem{corollary}[proposition]{Corollary}
\theoremstyle{definition}
\newtheorem{definition}[proposition]{Definition}
\newtheorem{remark}[proposition]{Remark}

\newtheorem{assumption}{Assumption}
\numberwithin{equation}{section}


\usepackage{amssymb, mathrsfs}
\usepackage{bm}
\def \R {\mathbb{R}}
\def \rmH {\mathrm{H}}
\def \rmd {\mathrm{d}}
\def \L {\mathscr{L}}
\def \K {\mathscr{K}}
\def \rmL {\mathrm{L}}
\def \calA {\mathcal{A}}
\def \calB {\mathcal{B}}
\def \calD {\mathcal{D}}

\def \calA {\mathcal{A}}

\def \rmV {\mathrm{V}}
\def \rmW {\mathrm{W}}
\def \capa {\mathrm{cap}}

\usepackage{mathtools}
\DeclarePairedDelimiter\abs{\lvert}{\rvert}
\DeclarePairedDelimiter\norm{\lVert}{\rVert}

\DeclarePairedDelimiter{\pair}{\langle}{\rangle}

\makeatletter

\newcommand{\lefteqno}{\let\veqno\@@leqno}
\renewcommand{\eqref}[1]{\textup{(\ignorespaces\ref{#1}\unskip\@@italiccorr)}}

\newcommand*{\transpose}{%
  {\mathpalette\@transpose{}}%
}
\newcommand*{\@transpose}[2]{%
  \raisebox{\depth}{$\m@th#1\intercal$}%
}

\makeatother


\begin{document}


\title[Variational Capacity for Non-Self-adjoint Fokker-Planck Operators]{Variational Formulation and Capacity Estimates for Non-Self-Adjoint Fokker-Planck Operators in Divergence Form}

\author{Mingyi Hou}
\address{Mingyi Hou\\Department of Mathematics, Uppsala University\\
751 05 Uppsala, Sweden}
\email{mingyi.hou@math.uu.se}

\date{\today}

\begin{abstract}
    We introduce a variational formulation for a general class of possibly degenerate, non-self-adjoint Fokker-Planck operators in divergence form, motivated by the work of Albritton et al.~(2024), and prove that it is suitable for defining the variational capacity. 
    Using this framework, we establish rough estimates for the equilibrium potential in the elliptic case, providing a novel approach compared to previous methods. 
    Finally, we derive the Eyring-Kramers formula for non-self-adjoint elliptic Fokker-Planck operators in divergence form, extending the results of Landim et al.~(2019) and Lee \& Seo (2022).
\end{abstract}

\subjclass[2020]{31C25, 37A60, 82C26; 49J40, 82C40}
\keywords{Potential theory, Eyring-Kramers formula, metastability, Fokker-Planck-Kolmogorov operator, variational formulation, non-self-adjoint operator}
\maketitle

\tableofcontents

\section{Introduction and Main Results}

In this paper, we introduce a variational capacity formulation for a general class of Fokker-Planck-Kolmogorov equations in divergence form. Additionally, we prove the Eyring-Kramers formula in elliptic, non-reversible cases. 

The operator we study is given by, for $z \in \mathbb{R}^d$,
\begin{equation}\label{eq:main}
\L_\varepsilon f(z) = \varepsilon e^{W/\varepsilon}\nabla\cdot\left[e^{-W/\varepsilon}\mathbf{A}\nabla f\right] + \left(\bm{b} + \mathbf{B}z\right)\cdot\nabla f,
\end{equation}
where $W(z):\mathbb{R}^d\to \mathbb{R}$ is a confining potential that is differentiable, and $\varepsilon$ is a parameter corresponding to thermal noise. Moreover, $\mathbf{A}(z)$ is a matrix, $\bm{b}(z): \mathbb{R}^d\to\mathbb{R}^d$ is a vector field, and $\mathbf{B}$ is a constant matrix of the following form:
\begin{equation}\label{assump:coeff}
    \lefteqno
    \tag{H1}
    \mathbf{A}(z) = \begin{bmatrix}
        \mathbf{0} & \mathbf{0} 
        \\
        \mathbf{0} & \widehat{\mathbf{A}}(z)
    \end{bmatrix},
    \quad
    \bm{b}(z) = \begin{bmatrix}
        \mathbf{0} \\
        \widehat{\bm{b}}(z)
    \end{bmatrix},
    \quad 
    \mathbf{B} = \begin{bmatrix}
        \mathbf{0} & \mathbf{B}_\kappa &  &  & 
        \\
        & \mathbf{0} & \mathbf{B}_2 &  &  
        \\
        &  & \ddots & \ddots & 
        \\
        & & &\ddots & \mathbf{B}_{1}
        \\
        &  &  &  & \mathbf{0}
    \end{bmatrix}
\end{equation}
where 
\begin{itemize}
    \item $\widehat{\mathbf{A}}(z)$ is a bounded $n_0\times n_0$ matrix for fixed $n_0\leq d$, and is H\"older continuous of order $\alpha$ for some $0<\alpha<1$, and uniformly elliptic, i.e.~$\xi^\transpose\widehat{\mathbf{A}}\xi \geq \lambda \abs{\xi}^2$ for some $\lambda>0$;
    \item $\widehat{\bm{b}}(z)$ is a continuous vector field of length $n_0$ and is locally bounded;
    \item Each $\mathbf{B}_j$ is an $n_{j}\times n_{j-1}$ constant matrix of rank $n_{j}$ such that $n_0 + n_1 + \cdots + n_\kappa = d$.
\end{itemize}

Additionally, we denote $\widehat{\nabla} := (\partial_{d - n_0 + 1},\dots,\partial_{d} )$, the gradient of the last $n_0$ coordinates.

The following weighted divergence-free assumption is imposed on $\bm{b}(z)$ and $\mathbf{B}$:
\begin{equation}\label{assump:divfree}
    \lefteqno
    \tag{H2}
    \nabla\cdot \left( \left(\bm{b}(z) + \mathbf{B}z\right)W(z)\right) = 0, \quad \text{for all } z\in\mathbb{R}^d.
\end{equation}
As a consequence, there exists a stationary measure $\rho_\varepsilon$ for the operator $\L_\varepsilon$ given by
\begin{equation*}
    \rho_\varepsilon(z) = \frac{1}{Z_\varepsilon} e^{-W(z)/\varepsilon}\rmd z, \quad \text{where } Z_\varepsilon = \int_{\mathbb{R}^d} e^{-W(z)/\varepsilon} \rmd z,
\end{equation*}
which satisfies $\int_{\mathbb{R}^d} \L_\varepsilon f\, \rho_\varepsilon(\rmd z) = 0$ for all $f$ such that the integral is well-defined. 
We denote by $\L_\varepsilon^\dag$ the adjoint operator of $\L_\varepsilon$ with respect to the measure $\rho_\varepsilon$, given by
\begin{equation}\label{eq:adjoint}
    \L_\varepsilon^\dag f(z) = \varepsilon e^{W/\varepsilon}\nabla\cdot\left[e^{-W/\varepsilon}\mathbf{A}^\transpose\nabla f\right] - \left(\bm{b} + \mathbf{B}z\right)\cdot\nabla f.
\end{equation}

\tocless{\subsection*{Motivation and Brief History}}
Roughly speaking, if $\mathbf{A}$ is divergence-free, denote by $\mathbf{S} = (\mathbf{A} + \mathbf{A}^\transpose)/2$ the symmetric part. Then, the operator \cref{eq:main} is the generator of the following Markov process:
\begin{equation}\label{eq:process}
    \rmd Z_t = \left( -\mathbf{A}^\transpose\nabla W(Z_t) + \bm{b}(Z_t) + \mathbf{B} Z_t \right) \rmd t + \sqrt{2\varepsilon\mathbf{S}} \rmd B_t,
\end{equation}
where $B_t$ is the standard Brownian motion in $\mathbb{R}^d$. 
This process is also known as the overdamped or underdamped Langevin process, depending on whether $\mathbf{A}$ has full rank.
On one hand, if $\mathbf{A} = \mathbf{I}_d$ is the identity matrix, $\mathbf{B} = 0$, and $\bm{b} = 0$, then \cref{eq:process} becomes the reversible overdamped Langevin process:
\begin{equation*}
    \rmd Z_t = -\nabla W(Z_t) \rmd t + \sqrt{\varepsilon} \rmd B_t,
\end{equation*} 
associated with the Fokker-Planck operator
\begin{equation*}
    \L_\varepsilon f= \varepsilon \Delta f - \nabla W\cdot\nabla f.
\end{equation*}
On the other hand, if $d = 2n$ for some positive integer $n$, $z = (x, v) \in \mathbb{R}^n\times\mathbb{R}^n$, assume now that $W(z) = U(x) + \abs{v}^2/2$, where $U(x):\mathbb{R}^n\to \mathbb{R}$ is a confining potential. Let
\begin{equation*}
    \mathbf{A} = \begin{pmatrix}
        0 & 0 \\
        0 & \mathbf{I}_n
    \end{pmatrix},
    \quad 
    \bm{b} = \begin{pmatrix}
        0 \\
        - \nabla_x U
    \end{pmatrix},
    \quad 
    \mathbf{B}
    =\begin{pmatrix}
        0 & \mathbf{I}_n \\
        0 & 0
    \end{pmatrix},
\end{equation*}
then \cref{eq:process} becomes underdamped Langevin process:
\begin{equation*}
    \begin{cases}
        \rmd X_t = V_t \rmd t,\\
        \rmd V_t = - \left(\nabla_x U(X_t) + V_t\right)\rmd t + \sqrt{\varepsilon} \rmd B_t.
    \end{cases}
\end{equation*}
associated with the Kramers-Fokker-Planck (KFP) operator
\begin{equation*}
    \K_\varepsilon f = \varepsilon \Delta_v f - v\cdot\nabla_v f - \nabla_x U\cdot\nabla_v f + v\cdot\nabla_x f.
\end{equation*}

The processes described above are good continuous models for stochastic gradient descent, with or without momentum. Metastability is crucial for the training process; see Shi et al.~\cite{SSJ23} and references therein.

The Eyring-Kramers formula refines the Arrhenius law and provides better asymptotics for the mean transition time between two wells. The sharp asymptotics for the reversible case were proved in \cite{BEGK04}, based on a potential-theoretic approach and variational capacity. In the subsequent work \cite{BGK05}, Bovier et al.~showed the relation between the principal eigenvalue and the capacity, as well as sharp asymptotics for low-lying eigenvalues and eigenfunctions. Since then, this has been a rich research area with strong interest in the literature.
Notably, for the case where $\mathbf{A}$ is elliptic non-symmetric and $\bm{b} = 0$, $\mathbf{B} = 0$, the Eyring-Kramers formula was proved in \cite{LMS19} using a variational formula and potential theory approach. For the case where $\mathbf{A} = \mathbf{I}_d$, $\bm{b} \neq 0$, the Eyring-Kramers formula was proved in \cite{LS22} using a non-variational approach, though the key observation is similar to \cite{LMS19}. Additionally, for the reversible case where $W$ has degenerate or singular critical points, the Eyring-Kramers formula was studied in \cite{BG10,AJV23,AJ24} and also in Kramers' original work \cite{Kra40}.

On the other hand, the Eyring-Kramers formula for the low-lying spectrum of \cref{eq:main} was recently proved in \cite{BLPM22} using semiclassical analysis. Semiclassical analysis and other methods for proving the Eyring-Kramers formula can be found in Berglund's survey \cite{Ber11} and references therein. 
A potential theory approach to the mean transition time for \cref{eq:main} remains open, while we note there are researches going on in the field, e.g.~\cite{LAS23}. The potential theory framework and variational formulation for \cref{eq:main} are far from complete.

Meanwhile, if $\mathbf{A}$ is only assumed to be H\"older continuous, the operator \cref{eq:main} can no longer be viewed as the generator of an It\^o process, making the mean transition time less meaningful. However, we can still argue that the Eyring-Kramers formula remains valid. Indeed, the mean exit time $w(z) = \mathbb{E}^z[\tau_{\calB}]$, where $\tau_{\calB}$ is the stopping time for the process \cref{eq:process} to hit $\calB\subset\mathbb{R}^d$, satisfies the following problem:
\begin{equation}\label{eq:landscape}
    \begin{cases}
        -\L_\varepsilon w = 1, &\textrm{ in } \overline{\calB}^\complement,\\
        \hfill w = 0, & \textrm{ on } \calB.
    \end{cases}
\end{equation}
Interestingly, $w$ is also known as the landscape function and is a popular topic when $W$ is a random potential; see, e.g.~\cite{FM12,ADFJ19}.
The relation between the Eyring-Kramers formula and the landscape function is that $w(\bm{m}_1) = \mathbb{E}^{\bm{m}_1}[\tau_{B_\varepsilon(\bm{m}_0)}]$, where $\bm{m}_0,\bm{m}_1$ are two local minima of $W$.
By the Donsker-Varadhan inequality, e.g.~\cite{LS17}, the principal eigenvalue $\lambda_2$ (if it exists) is bounded below by:
\begin{equation*}
    \lambda_2 \geq \frac{1}{\sup_{z\in\overline{\calB}^\complement}w(z)}.
\end{equation*}
In the reversible case, Bovier et al.~\cite{BGK05} improved this relation and obtained $\lambda_2 \approx  \capa / \|h\|_2^2$, where $\capa$ is the capacity between two wells and $h$ is the equilibrium potential. A similar relation for the non-reversible case remains an open problem. Further studies on the relation between mean exit time and the principal eigenvalue in the non-reversible case can be found in, for example, \cite{LPMN24}.

\subsection{Contributions and Novelty}

In this paper, we motivate the definition of capacity for \cref{eq:main} and propose a novel variational capacity formula in the form of an obstacle problem. The formula generalizes the equivalent one proposed in \cite{LMS19} for the non-degenerate case, while being inspired by the recent study of variational methods for the Dirichlet problem of \cref{eq:main} in \cite{AAMN21}.

We use the variational capacity formula to establish rough a priori estimates on the equilibrium potential for the non-degenerate case. Although these rough estimates are known, e.g.~\cite{LMS19,LS22}, the application of variational capacity is novel and may be extendable to degenerate cases.

Lastly, by combining a perturbation technique, we apply the rough estimates and follow the constructive method from \cite{LS22} to derive sharp capacity estimates for elliptic non-reversible divergence-form equations where $\mathbf{A}$ is only H\"older continuous.

\subsection{Preliminaries}
The confining condition for $W(z)$ is given by 
\begin{equation}\label{assump:confining}
    \lefteqno
    \tag{HC}
    \lim_{\abs{z}\to\infty} W(z) = \infty, \textrm{ and } W(z)\geq c_1 \abs{x}^{q} -c_2,
\end{equation}
for some $q>1$ and some positive constants $c_1,\,c_2$.
By hypocoercivity theory \cite{Vil09,BDZ24}, this condition ensures that the generated semigroup (or Markov process) converges to equilibrium exponentially (or that the process is recurrent).

{
\subsubsection*{Function Space Basics}
Let $\calD\subset\mathbb{R}^d$ be an open set. 
We denote by $\rmH^1(\calD) = \rmW^{1,2}(\calD)$ the Sobolev space of functions with respect to the measure $\rho_\varepsilon$, and $\rmH^1_0(\calD)$ be the closure of $\mathrm{C}_c^\infty(\calD)$ in $\rmH^1(\calD)$.
We also introduce the function space $\widehat{\rmH}^1(\calD):= \{f(z) \in \rmL^2(\calD,\rho_\varepsilon): \widehat{\nabla} f\in\rmL^2(\calD,\rho_\varepsilon) \}$, equipped with the weighted norm, and $\widehat{\rmH}^1_0$ as the closure of $\mathrm{C}_c^\infty(\calD)$. 
In addition, we define $\widehat{\rmH}^1_{\mathrm{div}}(\calD) := \{ \bm{l} \in \left(\rmL^2(\calD, \rho_\varepsilon)\right)^{n_0}: \widehat{\nabla}\cdot \bm{l} \in \rmL^2(\calD, \rho_\varepsilon) \}$.

Next, we define the following inner product associated with the operator \cref{eq:main}, for $\bm{u},\bm{v}\in\left(\rmL^2(\mathbb{R}^d, \rho_\varepsilon)\right)^{n_0}$ vector-valued functions,  
\begin{equation*}
    \pair*{\bm{u}, \bm{v}}_{\calD} := \int_{\calD} \bm{u}^\transpose \widehat{\mathbf{S}}^{-1} \bm{v} \,\rho_\varepsilon(\rmd z),
\end{equation*}
where $\widehat{\mathbf{S}} = \left(\widehat{\mathbf{A}} + \widehat{\mathbf{A}}^\transpose\right)/2$ is the symmetric part of $\widehat{\mathbf{A}}$. 

Furthermore, we say that a function $u(z)\in \widehat{\rmH}^1(\calD)$ is a weak solution to the equation 
\begin{equation*}
    \L_\varepsilon u= f,
\end{equation*}
if for all $\varphi\in \mathrm{C}^1_c(\calD)$, it holds that
\begin{equation*}
    \int_{\calD} \left(\varepsilon \widehat{\mathbf{A}} \widehat{\nabla} u \cdot \widehat{\nabla} \varphi + u (\bm{b} + \mathbf{B}z) \cdot\nabla\varphi \right)\,\rho_\varepsilon(\rmd z) = \int_{\calD} f\varphi \,\rho_\varepsilon(\rmd z).
\end{equation*}

\begin{assumption}\label{assump:ab}
    Throughout this paper, we assume $\mathcal{A}, \mathcal{B}\subset\mathbb{R}^d$ are two bounded open set with $\mathrm{C}^{2,\alpha}$ boundaries for some $0<\alpha<1$, and $\abs{\partial\mathcal{A}}, \abs{\partial\mathcal{B}}<\infty$. 
    Moreover, $\mathrm{dist}(\mathcal{A}, \mathcal{B})>0$. 
    We also define $\Omega = \left(\overline{\mathcal{A}\cup\mathcal{B}}\right)^\complement$.
\end{assumption}

\subsubsection*{Variational Formulation}
The author is motivated by \cite{AAMN21} and proposes the following form of the variational formulation for \cref{eq:main}: First, we define a functional for \( f \in \rmH^1_{0}(\overline{\calB}^\complement) \) as
\begin{equation}\label{eq:functional}
    J_{\varepsilon, \overline{\calB}^\complement}[f] := \inf_{\bm{g} \in \rmV_{\varepsilon, f}(\overline{\calB}^\complement)} \varepsilon \pair*{\widehat{\mathbf{A}} \widehat{\nabla} f - \bm{g}, \widehat{\mathbf{A}} \widehat{\nabla} f - \bm{g}}_{\overline{\mathcal{B}}^\complement},
\end{equation}
where
\begin{equation*}
    {\rmV_{\varepsilon, f}}(\overline{\calB}^\complement) := \Big\{ \bm{g} \in \widehat{\rmH}^1_{\mathrm{div}} ( \overline{\calB}^\complement ): 
    \varepsilon e^{W/\varepsilon} \widehat{\nabla} \cdot \left( e^{-W/\varepsilon} \bm{g} \right) = - (\bm{b} + \mathbf{B}z) \cdot \nabla f  \Big\}.
\end{equation*}
Similarly, for the adjoint equation, the functional is given by
\begin{equation}\label{eq:functionaladjoint}
    J^\dag_{\varepsilon, \overline{\calB}^\complement}[f] := \inf_{\bm{g} \in \rmV^\dag_{\varepsilon, f}(\overline{\calB}^\complement)} \varepsilon \pair*{\widehat{\mathbf{A}}^\transpose \widehat{\nabla} f - \bm{g}, \widehat{\mathbf{A}}^\transpose \widehat{\nabla} f - \bm{g}}_{\mathcal{B}^\complement},
\end{equation}
where \( \rmV^\dag_{\varepsilon,f} \) is the space of admissible vector fields \( \bm{g} \) that satisfy the adjoint equation
\begin{equation*}
    \varepsilon e^{W/\varepsilon} \widehat{\nabla} \cdot \left( e^{-W/\varepsilon} \bm{g} \right) = \left( \bm{b} + \mathbf{B}z \right) \cdot \nabla f.
\end{equation*}

\subsubsection*{Definition of Capacity} 
To define the capacity, we need to assume certain well-posedness and regularity assumptions on solutions to the Dirichlet problem for \cref{eq:main}.
The following lemma is a direct consequence of the barrier method, see e.g.~\cite{AH24,AP20,Man97,GT01}.
\begin{lemma}\label{thm:bdyreg}
    Let \cref{assump:ab} hold, and let \( \bm{n}_{\Omega} \) be the unit outward normal and \( \widehat{\bm{n}}_{\Omega} \) be the part corresponding to the last \( n_0 \) coordinates.
    Let $\Gamma(\Omega)$ and $\Gamma^\dag(\Omega)$ be the regular parts (where all continuous solutions attain the prescribed boundary values) of the equilibrium and adjoint equilibrium problems, respectively.
    Then the following holds:
    \begin{itemize}
        \item For any \( z \in \partial \Omega \), if \( \widehat{\bm{n}}_\Omega(z) \neq 0 \), then \( z \in \Gamma(\Omega) \) and \( z \in \Gamma^\dag(\Omega) \).
        \item For any \( z \in \partial \Omega \setminus \Gamma(\Omega) \), we have \( \widehat{\bm{n}}_\Omega(z) = 0 \), and \( \mathbf{B}z \cdot \bm{n}_{\Omega}(z) \leq 0 \). Similarly, for \( z \in \partial \Omega \setminus \Gamma^\dag(\Omega) \), we have \( \widehat{\bm{n}}_\Omega(z) = 0 \) and \( \mathbf{B}z \cdot \bm{n}_{\Omega}(z) \geq 0 \).
        \item \( \Gamma(\Omega) \cup \Gamma^\dag(\Omega) \) differs from \( \partial \Omega \) only on a set of surface measure zero.
        \item If \( \mathbf{A} \) is uniformly elliptic, then \( \Gamma(\Omega) = \Gamma^\dag(\Omega) = \partial \Omega \).
    \end{itemize}
\end{lemma}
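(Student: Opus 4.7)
The plan is to rely on the standard Perron--Wiener--Brelot barrier framework, combined with Fichera's classification of boundary points for degenerate elliptic operators in divergence form, proceeding bullet by bullet.

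For the first bullet, suppose $\widehat{\bm{n}}_\Omega(z_0) \neq 0$. Since $\partial\Omega$ is $\mathrm{C}^{2,\alpha}$, an exterior tangent ball exists at $z_0$; its projection onto the last $n_0$ coordinates yields a strictly convex exterior touching configuration in the elliptic directions. I would use the classical barrier $w(z) = 1 - e^{-\alpha\,\mathrm{dist}(z,\partial\Omega)}$ (or a smoothed signed-distance function) and compute $\L_\varepsilon w$. The uniform ellipticity of $\widehat{\mathbf{A}}$ together with the non-vanishing of $\widehat{\bm{n}}_\Omega(z_0)$ makes the leading second-order contribution negative and quadratic in $\alpha$, so for $\alpha$ large enough it dominates the locally bounded drift $\bm{b}+\mathbf{B}z$, giving $\L_\varepsilon w \leq -1$ on a neighborhood. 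The same $w$ serves both for $\L_\varepsilon$ and for $\L_\varepsilon^\dag$ because the first-order term is dominated in either sign. Standard Perron theory then yields $z_0\in\Gamma(\Omega)\cap\Gamma^\dag(\Omega)$.

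For the second bullet, I would argue by contradiction. If $z_0\in\partial\Omega\setminus\Gamma(\Omega)$, the first bullet forces $\widehat{\bm{n}}_\Omega(z_0)=0$. Assuming further $\mathbf{B}z_0\cdot\bm{n}_\Omega(z_0)>0$, take the first-order barrier $w(z)=\varphi\bigl((z-z_0)\cdot\bm{n}_\Omega(z_0)\bigr)$ for a smooth nonnegative $\varphi$ with $\varphi(0)=0$ and $\varphi'>0$. Since $\widehat{\bm{n}}_\Omega(z_0)=0$, the components of $\widehat{\nabla} w$ vanish at $z_0$, so the second-order term in $\L_\varepsilon w$ is small there, while the first-order term evaluates to $(\bm{b}(z)+\mathbf{B}z)\cdot\bm{n}_\Omega(z_0)\,\varphi'$, which is strictly positive in a one-sided neighborhood of $z_0$ by continuity. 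After an appropriate cutoff this yields a local barrier contradicting the irregularity of $z_0$. The analogous construction for $\L_\varepsilon^\dag$, where the drift has the opposite sign, delivers $\mathbf{B}z\cdot\bm{n}_\Omega(z)\geq 0$ on $\partial\Omega\setminus\Gamma^\dag(\Omega)$.

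For the third bullet, the preceding two imply
\begin{equation*}
\partial\Omega\setminus\bigl(\Gamma(\Omega)\cup\Gamma^\dag(\Omega)\bigr)\subset E:=\bigl\{z\in\partial\Omega : \widehat{\bm{n}}_\Omega(z)=0\ \text{and}\ \mathbf{B}z\cdot\bm{n}_\Omega(z)=0\bigr\}.
\end{equation*}
On the relatively closed set $S=\{\widehat{\bm{n}}_\Omega=0\}\subset\partial\Omega$, the function $z\mapsto\mathbf{B}z\cdot\bm{n}_\Omega(z)$ is smooth. Using the block-triangular structure of $\mathbf{B}$ together with the full-rank hypothesis on each $\mathbf{B}_j$, one can show this function cannot vanish on any $(d-1)$-dimensional piece of $S$ without forcing a degeneracy of $\partial\Omega$ incompatible with its $\mathrm{C}^{2,\alpha}$ regularity. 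A standard Sard/transversality argument then gives $\rmH^{d-1}(E)=0$. Finally, for the fourth bullet, uniform ellipticity of $\mathbf{A}$ gives $\widehat{\bm{n}}_\Omega=\bm{n}_\Omega\neq 0$ everywhere, and the first bullet alone covers $\partial\Omega$.

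The main obstacle is the third bullet: turning the hypoelliptic/Kalman-type rank condition implicit in the $\mathbf{B}_j$'s into a quantitative non-degeneracy statement strong enough to conclude surface measure zero. I would expect to need either a careful stratification of the characteristic set in the spirit of Oleinik--Radkevich, or a Sard-type argument on the composition of the Gauss map with the linear functional $z\mapsto\mathbf{B}z\cdot\bm{n}$, exploiting the specific superdiagonal structure of $\mathbf{B}$ together with the $\mathrm{C}^{2,\alpha}$ smoothness of $\partial\Omega$.
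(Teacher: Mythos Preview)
The paper does not actually prove this lemma: it is stated as ``a direct consequence of the barrier method'' with citations to \cite{AH24,AP20,Man97,GT01} and no further argument. Your proposal is precisely a barrier-method proof, so the approaches coincide, and in fact you supply considerably more detail than the paper does.

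A couple of remarks on the details. For the second bullet your barrier $w(z)=\varphi\bigl((z-z_0)\cdot\bm{n}_\Omega(z_0)\bigr)$ with $\varphi'>0$ has the wrong sign inside $\Omega$ (since $\bm{n}_\Omega$ is outward); replacing $\bm{n}_\Omega$ by $-\bm{n}_\Omega$ fixes this, and then the computation you describe does give $\L_\varepsilon w(z_0)=-\mathbf{B}z_0\cdot\bm{n}_\Omega(z_0)\,\varphi'(0)<0$ when $\mathbf{B}z_0\cdot\bm{n}_\Omega(z_0)>0$, exactly as needed. Note also that $\bm{b}(z_0)\cdot\bm{n}_\Omega(z_0)=\widehat{\bm{b}}(z_0)\cdot\widehat{\bm{n}}_\Omega(z_0)=0$ automatically once $\widehat{\bm{n}}_\Omega(z_0)=0$, so only the $\mathbf{B}z$ part of the drift survives, which is why the statement singles it out.

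For the third bullet you are right that this is where the work lies, and the paper offers nothing beyond the citations. Your Sard/stratification sketch is reasonable; the cleanest route in the cited literature is the Fichera classification (Ole\u{\i}nik--Radkevich, and specifically \cite{Man97} for this operator class), where the Kalman rank condition encoded in the $\mathbf{B}_j$'s forces the characteristic set $\{\widehat{\bm{n}}_\Omega=0,\ \mathbf{B}z\cdot\bm{n}_\Omega=0\}$ to be lower-dimensional.
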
  
\begin{definition}
    Let $\calD\subset\R^d$ be an open set with $\mathrm{C}^{2,\alpha}$ boundary, and let $f\in \mathrm{C}(\overline{\calD})\cap \mathrm{C}^1(\calD)$. 
    We define the $\widehat{\mathbf{A}}$-normal derivative of $f$ at $z_0\in\partial\calD$, denoted by
    \begin{equation*}
        \widehat{\mathbf{A}}(z_0)\widehat{\nabla}f(z_0)\cdot\widehat{\bm{n}}_{\calD}(z_0) := \lim_{\delta\to 0} \widehat{\mathbf{A}}(z_0 - \delta\widehat{\bm{n}}_{\calD})\widehat{\nabla}f(z_0 - \delta\widehat{\bm{n}}_{\calD}) \cdot \widehat{\bm{n}}_{\calD} (z_0)
    \end{equation*}
    where $\bm{n}_{\calD}$ is the unit outward normal to $\calD$, and $\widehat{\bm{n}}_{\calD}$ is the part corresponding to the last $n_0$ coordinates.
    Similarly, we can define the $\widehat{\mathbf{A}}^\transpose$-normal derivative of $f$.
\end{definition}
\begin{assumption}\label{assump:equilibrium}
    Throughout the paper, we assume that there exists a unique weak solution \( h_\varepsilon = h_{\varepsilon;\calA,\calB} \in \mathrm{C}(\overline{\Omega})\cap \mathrm{C}^1(\Omega) \) to the following Dirichlet problem, known as the equilibrium problem:
    \begin{equation}\label{eq:equilibriumproblem}
        \begin{cases}
            -\L_\varepsilon h_{\varepsilon} = 0, & \text{in } \Omega, \\
            \hfill h_{\varepsilon} = 1, & \text{on } \Gamma(\Omega)\cap\partial\calA, \\
            \hfill h_{\varepsilon} = 0, & \text{on } \Gamma(\Omega)\cap\partial\calB,
        \end{cases}
    \end{equation}
    where $\Gamma(\Omega)$ denote the regular part of the boundary.
    Moreover, we assume there exists uniformly bounded $\widehat{\mathbf{A}}$-normal derivative of \( h_\varepsilon \) on $\partial\Omega$.

    Similarly, we denote by \( h_\varepsilon^\dag \in \mathrm{C}(\overline{\Omega})\cap \mathrm{C}^1(\Omega) \) the weak solution to the equilibrium problem for the adjoint operator \cref{eq:adjoint}, attains the boundary value on $\Gamma^\dag(\Omega)$, and also satisfies the $\widehat{\mathbf{A}}$-normal derivative condition.
\end{assumption}

\begin{remark}\label{rmk:reg}
    When $\mathbf{A}$ is uniformly elliptic and H\"older continuous, the well-posedness and regularity are guaranteed by classical elliptic theory. In particular \( h_\varepsilon \in \mathrm{C}^{1,\alpha}(\overline{\Omega}) \) by \cite[Thm.~8.34]{GT01}, and the boundary value is attained on the entire boundary since \( \partial\Omega \) is regular under the cone condition. Hence, the $\widehat{\mathbf{A}}$-normal derivative coincides with the classical directional derivative.

    However, in the degenerate case, particularly in unbounded non-product domains, to the best of our knowledge, there exists a Perron's solution for \cref{eq:equilibriumproblem} that is continuous up to the boundary and locally a weak solution, as shown in \cite{AH24}. 
    It is important to note that the solution will most likely not be \( \mathrm{C}^{1,\alpha}(\overline{\Omega}) \) no matter how regular the coefficients are, since there are transport directions which lacks boundary regularity (e.g.~\cite[Appendix]{Zhu22}). 
    Further regularity of the equilibrium potential will be addressed in future work.
\end{remark}
\begin{remark}
    After extending $h_\varepsilon = 1$ in $\calA$, it is indeed a lower semi-continuous function, and also a weak supersolution, namely for $\varphi\in C_c^\infty(\overline{\calB}^\complement)$, we have 
    \begin{equation*}
        \int_{\overline{\calB}^\complement} \left(\varepsilon \widehat{\mathbf{A}}\widehat{\nabla} h \cdot\widehat{\nabla}\varphi + h_\varepsilon \left(\bm{b} + \mathbf{B}z\right) \cdot\nabla \varphi\right) \,\rho_\varepsilon(\rmd z) \geq 0,
    \end{equation*}
    as computed in \cref{sec:capdef}.
\end{remark}
The following weak maximum principle is known for continuous weak solutions, see \cite{AP20,AH24}.
\begin{lemma}\label{thm:max}
    Let \cref{assump:ab} hold, if $f\in \mathrm{C}(\overline{\Omega})\cap \mathrm{C}^1(\Omega)$ is a weak solution to $\L_\varepsilon f = 0$ in $\Omega$, then it satisfies the maximum principle, namely
    \begin{equation*}
        \min_{y\in\Gamma(\Omega)} f(y) \leq f(z) \leq \max_{y\in\Gamma(\Omega)} f(y),\quad \forall z\in\Omega.
    \end{equation*}
\end{lemma}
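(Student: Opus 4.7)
The plan is a standard De~Giorgi energy argument, adapted to the degenerate geometry and to the non-regular portion of the boundary. Since $\L_\varepsilon$ annihilates constants, subtracting $M = \max_{y\in\Gamma(\Omega)} f(y)$ from $f$ reduces the upper bound to the implication: if $f \leq 0$ on $\Gamma(\Omega)$, then $f^+\equiv 0$ in $\Omega$. The lower bound follows by applying the same argument to $-f$.

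First I would test the weak formulation against $\eta_k f^+$, where $\eta_k\in \mathrm{C}^\infty_c(\Omega)$ is a smooth cut-off increasing to $\mathbf{1}_\Omega$, and send $k\to\infty$. The cut-off derivatives produce a flux along $\partial\Omega$. On $\Gamma(\Omega)$ the trace $f^+$ vanishes, so those contributions drop out; on $\partial\Omega\setminus\Gamma(\Omega)$, \cref{thm:bdyreg} gives $\widehat{\bm{n}}_\Omega=0$, which kills the diffusive boundary flux $\widehat{\mathbf{A}}\widehat{\nabla}f\cdot\widehat{\bm{n}}_\Omega$, and also makes $\bm{b}\cdot\bm{n}_\Omega=0$ since $\bm{b}$ only has the last $n_0$ components, leaving only the transport term involving $\mathbf{B}z\cdot\bm{n}_\Omega\leq 0$.

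Next I would handle the interior transport integral by writing $f(\bm{b}+\mathbf{B}z)\cdot\nabla f^+=\tfrac12(\bm{b}+\mathbf{B}z)\cdot\nabla(f^+)^2$ on $\{f>0\}$ and integrating by parts against $\rho_\varepsilon$. The divergence-free assumption \eqref{assump:divfree} makes the bulk contribution vanish, leaving only a boundary integral $\tfrac12\int_{\partial\Omega\setminus\Gamma(\Omega)}(f^+)^2\,\mathbf{B}z\cdot\bm{n}_\Omega\,\rho_\varepsilon\leq 0$. Combined with the uniform ellipticity of $\widehat{\mathbf{A}}$, this yields the energy inequality
\[
\lambda\varepsilon\int_{\Omega}\abs{\widehat{\nabla}f^+}^2\,\rho_\varepsilon(\rmd z) + \int_{\partial\Omega\setminus\Gamma(\Omega)}(f^+)^2\,(-\mathbf{B}z\cdot\bm{n}_\Omega)\,\rho_\varepsilon(\rmd S)\leq 0,
\]
which forces both terms to vanish; in particular $\widehat{\nabla}f^+\equiv 0$ a.e.\ in $\Omega$.

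The main obstacle is to upgrade $\widehat{\nabla}f^+=0$ to $f^+\equiv 0$. On $\{f^+>0\}$ the equation collapses to the transport equation $(\bm{b}+\mathbf{B}z)\cdot\nabla f=0$, because the divergence-form diffusion and the $\bm{b}$-component of the drift both act only in the $\widehat{\nabla}$-directions. The rank-$n_j$ hypothesis on each $\mathbf{B}_j$ in \eqref{assump:coeff} is precisely H\"ormander's bracket-generating condition for $\{\partial_{z_i}:i\in\text{last }n_0\text{ coords}\}\cup\{\mathbf{B}z\cdot\nabla\}$: the iterated commutators $[\mathbf{B}z\cdot\nabla,\partial_{z_{(0),i}}]=-\mathbf{B}_1 e_i\cdot\nabla_{z_{(1)}}$, and so on, exhaust the full tangent space after $\kappa$ steps. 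The boundary condition $f^+=0$ on $\Gamma(\Omega)$ then propagates along these characteristics to force $f^+\equiv 0$. In the fully elliptic regime ($\kappa=0$, $n_0=d$, so $\Gamma(\Omega)=\partial\Omega$) this last step is immediate; in the genuinely degenerate case the propagation-of-vanishing is the technical core, and I would invoke the hypoelliptic weak maximum principles in \cite{AP20,AH24}.
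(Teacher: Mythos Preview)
The paper does not actually prove this lemma: it is stated with the preamble ``The following weak maximum principle is known for continuous weak solutions, see \cite{AP20,AH24}'' and no argument is given. So there is no in-paper proof to compare against; your proposal is an attempt to supply what those references contain.

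Your energy argument is reasonable in spirit, and in the fully elliptic case ($n_0=d$, $\Gamma(\Omega)=\partial\Omega$) it is essentially the standard proof. Two genuine gaps, however, deserve attention. First, $\Omega=(\overline{\mathcal{A}\cup\mathcal{B}})^\complement$ is \emph{unbounded}, so your cutoff $\eta_k$ must also truncate at infinity for $\eta_k f^+$ to be admissible; you give no argument for why those far-field contributions vanish. The weight $e^{-W/\varepsilon}$ in $\rho_\varepsilon$ decays by \eqref{assump:confining}, but the hypothesis $f\in C(\overline{\Omega})$ alone does not bound $f$ at infinity, so the limit is not automatic. Second --- and this is the real issue --- once you obtain $\widehat{\nabla}f^+=0$, all the difficulty remains. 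Your ``propagation along characteristics via H\"ormander brackets'' is a heuristic, not a proof: bracket generation tells you the operator is hypoelliptic, not that zeros of a merely $C^1$ function propagate along commutator directions. You acknowledge this by writing ``I would invoke the hypoelliptic weak maximum principles in \cite{AP20,AH24}'' --- but that is precisely the paper's own citation. So your energy step is extra scaffolding that does not shorten the route; in the degenerate case you end up deferring to exactly the same external results.
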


\begin{definition}[Capacity]\label{def:cap}
    Let \cref{assump:ab,assump:equilibrium} hold. The capacity and the adjoint capacity between \( \calA \) and \( \calB \) are defined as
    \begin{equation*}
        \begin{split}
            \capa_{\varepsilon} (\calA, \calB) &= \frac{1}{Z_\varepsilon} \int_{\partial \calA} \left( \varepsilon (\widehat{\mathbf{A}} \widehat{\nabla} h_{\varepsilon}) \cdot \widehat{\bm{n}}_{\Omega} + (h_\varepsilon - 1) (\bm{b} + \mathbf{B}z) \cdot \bm{n}_{\Omega} \right) e^{-W/\varepsilon} \sigma(\rmd z), \\
            \capa_{\varepsilon}^\dag (\calA, \calB) &= \frac{1}{Z_\varepsilon} \int_{\partial \calA} \left( \varepsilon (\widehat{\mathbf{A}}^\transpose \widehat{\nabla} h_{\varepsilon}^\dag) \cdot \widehat{\bm{n}}_{\Omega} + (1 - h_\varepsilon^\dag) (\bm{b} + \mathbf{B}z) \cdot \bm{n}_{\Omega} \right) e^{-W/\varepsilon} \sigma(\rmd z),
        \end{split}
    \end{equation*}
    where \( \bm{n}_{\Omega} \) is the unit outward normal vector to \( \Omega \), \( \widehat{\bm{n}}_{\Omega} \) corresponds to the part related to the last \( n_0 \) coordinates, and \( \sigma(\rmd z) \) is the surface measure on \( \partial \calA \).
\end{definition}
The capacity is well-defined and is positive due to \cref{thm:max}.
Note that the second part of the integral represents the potential jump discontinuity of $h_\varepsilon,h_\varepsilon^\dag$ on the irregular part of the boundary.
We motivate this definition of capacity in \cref{sec:capdef}.

We remark that different definitions of capacity can be found in the literature provided the existence of a Green function, for example \cite{KLT18} for evolutionary equations $\partial_t u = \L_\varepsilon u$.
\begin{remark}
    If $\mathbf{A}$ is non-degenerate and H\"older continuous, from \cref{rmk:reg,thm:bdyreg}, we indeed have
    \begin{equation*}
        \capa_\varepsilon(\calA, \calB) = \varepsilon \int_{\partial \calA} (\mathbf{A} \nabla h_{\varepsilon}) \cdot \bm{n}_{\Omega} e^{-W/\varepsilon} \sigma(\rmd x),
    \end{equation*}
    which is the classical definition of capacity.
\end{remark}

\subsection{Main Results}
An important observation is that the capacity is symmetric.
\begin{lemma}\label{thm:capsymm}
    Let \cref{assump:ab,assump:equilibrium} hold. 
    Then, the following equality holds:
    \begin{equation*}
        \capa_{\varepsilon}(\mathcal{A}, \calB) = \capa_{\varepsilon}^\dag (\mathcal{A}, \calB).
    \end{equation*}
\end{lemma}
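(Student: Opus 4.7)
The strategy is a Green-type identity: I would rewrite $Z_\varepsilon\capa_\varepsilon(\calA,\calB)$ and $Z_\varepsilon\capa_\varepsilon^\dag(\calA,\calB)$ as two interior integrals over $\Omega$ whose symmetric parts coincide and whose transport parts differ only by a sign, so that the difference collapses, after one more integration by parts, to a boundary integral that vanishes by \eqref{assump:divfree}.

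Concretely, extend $h_\varepsilon^\dag$ by $1$ on $\calA$ and $0$ on $\calB$. On the regular part of $\partial\Omega$ we may then rewrite the boundary integrand of \Cref{def:cap} as $\frac{1}{Z_\varepsilon}h_\varepsilon^\dag\,\varepsilon\widehat{\mathbf{A}}\widehat{\nabla}h_\varepsilon\cdot\widehat{\bm{n}}_\Omega e^{-W/\varepsilon}$. Applying the divergence theorem to $h_\varepsilon^\dag e^{-W/\varepsilon}\varepsilon\mathbf{A}\nabla h_\varepsilon$ on $\Omega$ and using $\L_\varepsilon h_\varepsilon=0$ in $\Omega$---equivalently $\nabla\cdot(e^{-W/\varepsilon}\varepsilon\mathbf{A}\nabla h_\varepsilon)=-e^{-W/\varepsilon}(\bm{b}+\mathbf{B}z)\cdot\nabla h_\varepsilon$---yields
\begin{equation*}
Z_\varepsilon\capa_\varepsilon(\calA,\calB) = \int_\Omega\bigl[\varepsilon\widehat{\mathbf{A}}\widehat{\nabla}h_\varepsilon\cdot\widehat{\nabla}h_\varepsilon^\dag - h_\varepsilon^\dag(\bm{b}+\mathbf{B}z)\cdot\nabla h_\varepsilon\bigr]e^{-W/\varepsilon}\,\rmd z.
\end{equation*}
The mirror calculation with $\L_\varepsilon^\dag h_\varepsilon^\dag=0$ and the transpose matrix gives
\begin{equation*}
Z_\varepsilon\capa_\varepsilon^\dag(\calA,\calB) = \int_\Omega\bigl[\varepsilon\widehat{\mathbf{A}}\widehat{\nabla}h_\varepsilon\cdot\widehat{\nabla}h_\varepsilon^\dag + h_\varepsilon(\bm{b}+\mathbf{B}z)\cdot\nabla h_\varepsilon^\dag\bigr]e^{-W/\varepsilon}\,\rmd z,
\end{equation*}
since $\widehat{\mathbf{A}}^\transpose\widehat{\nabla}h_\varepsilon^\dag\cdot\widehat{\nabla}h_\varepsilon = \widehat{\mathbf{A}}\widehat{\nabla}h_\varepsilon\cdot\widehat{\nabla}h_\varepsilon^\dag$ and the transport sign flips in $\L_\varepsilon^\dag$.

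Subtracting, the quadratic parts cancel exactly and one is left with $-\int_\Omega(\bm{b}+\mathbf{B}z)\cdot\nabla(h_\varepsilon h_\varepsilon^\dag)e^{-W/\varepsilon}\,\rmd z$. One more integration by parts, combined with the identity $\nabla\cdot\bigl((\bm{b}+\mathbf{B}z)e^{-W/\varepsilon}\bigr)=0$ (the invariance of $\rho_\varepsilon$, equivalent to \eqref{assump:divfree}), turns this into the single boundary integral $-\frac{1}{Z_\varepsilon}\int_{\partial\Omega}h_\varepsilon h_\varepsilon^\dag(\bm{b}+\mathbf{B}z)\cdot\bm{n}_\Omega e^{-W/\varepsilon}\sigma(\rmd z)$. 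Because $h_\varepsilon h_\varepsilon^\dag\equiv 1$ on $\Gamma(\Omega)\cap\partial\calA$ and $\equiv 0$ on $\Gamma(\Omega)\cap\partial\calB$, it reduces further to $-\int_{\partial\calA}(\bm{b}+\mathbf{B}z)\cdot\bm{n}_\Omega e^{-W/\varepsilon}\sigma(\rmd z)$, which vanishes by a final application of the divergence theorem on $\calA$ and \eqref{assump:divfree}. Hence $\capa_\varepsilon=\capa_\varepsilon^\dag$.

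The main obstacle I anticipate is the bookkeeping on the irregular part $\partial\Omega\setminus\Gamma(\Omega)$, where $h_\varepsilon$ may fail to attain its Dirichlet value and \Cref{def:cap} inserts the $(h_\varepsilon-1)(\bm{b}+\mathbf{B}z)\cdot\bm{n}_\Omega$ jump correction. By \Cref{thm:bdyreg} these points satisfy $\widehat{\bm{n}}_\Omega=0$, so the diffusive flux drops out automatically there, and the signs of $\mathbf{B}z\cdot\bm{n}_\Omega$ on $\Gamma(\Omega)^\complement$ and $\Gamma^\dag(\Omega)^\complement$ are opposite. Interpreting the extended $h_\varepsilon,\,h_\varepsilon^\dag$ as BV functions on $\mathbb{R}^d$ and tracking the distributional jumps of the transport flux through the divergence theorem should reproduce precisely the $(h_\varepsilon-1)$ and $(1-h_\varepsilon^\dag)$ corrections of \Cref{def:cap} for each capacity, after which the cancellation step above goes through verbatim.
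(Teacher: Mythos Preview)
Your approach is correct in spirit and is essentially a two-step unpacking of the paper's one-step argument. The paper proceeds more directly: it inserts $h_\varepsilon^\dag$ as a multiplier into the surface integral defining $\capa_\varepsilon(\calA,\calB)$ and then applies the integration-by-parts \Cref{thm:intbyparts} once, landing immediately on the boundary expression for $\capa_\varepsilon^\dag(\calA,\calB)$. You instead pull each capacity back to an interior integral over $\Omega$, subtract, and then integrate by parts once more to kill the residual transport term; this is the same Green identity, just run in a longer loop.

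Your main worry---the irregular part of $\partial\Omega$---is more easily dispatched than the BV bookkeeping you propose. By \Cref{thm:bdyreg}, $\Gamma(\Omega)\cup\Gamma^\dag(\Omega)$ covers $\partial\Omega$ up to surface-measure zero, so on $\partial\calA\setminus\Gamma^\dag(\Omega)$ we have a.e.\ that the point lies in $\Gamma(\Omega)$; there $h_\varepsilon=1$, so the jump term $(h_\varepsilon-1)(\bm b+\mathbf{B}z)\cdot\bm n_\Omega$ vanishes, and also $\widehat{\bm n}_\Omega=0$, so the diffusive flux vanishes. Thus the entire integrand in \Cref{def:cap} is zero where $h_\varepsilon^\dag\neq 1$, and multiplying by $h_\varepsilon^\dag$ is harmless---no distributional jump analysis is needed. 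This is exactly why the paper's single insertion is legitimate, and the same observation streamlines your irregular-boundary step without BV machinery.
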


\subsubsection{Variational Capacity}
Our first result relates the variational formulation to the capacity. 

\begin{theorem}[Variational Capacity]\label{thm:variationalcap}
    Let \cref{assump:ab,assump:equilibrium} hold. Then, the following relation holds:
    \begin{equation}\label{eq:variationalcap}
        \varepsilon \pair*{ \widehat{\mathbf{S}}\widehat{\nabla} h_\varepsilon,\widehat{\mathbf{S}} \widehat{\nabla} h_\varepsilon }_{\Omega}  \leq \capa_{\varepsilon} (\calA, \calB) \leq \inf_{f\in \rmH^1_{ 0}(\overline{\calB}^\complement),\,f\geq \mathcal{X}_{\calA}} J_{\overline{\calB}^\complement} [f].    
    \end{equation}
    In particular, if $\mathbf{A}$ is non-degenerate and H\"older continuous, then the equality holds, 
    and the minimizer for the variational formulation is attained at \( \bar{f} = (h_\varepsilon + h_\varepsilon^\dag)/2 \), and \( \bar{g} = (\mathbf{A}\nabla h_\varepsilon - \mathbf{A}^\transpose \nabla h_\varepsilon^\dag)/2 \).

    Similar results hold for the adjoint capacity \( \capa_\varepsilon^\dag(\calA,\calB) \) as well.
\end{theorem}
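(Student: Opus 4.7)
The proof splits into the lower bound, the upper bound via Cauchy--Schwarz, and the identification of the minimizer in the non-degenerate case; the adjoint statement follows by exchanging the roles of $h_\varepsilon$ and $h_\varepsilon^\dag$ throughout.

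For the lower bound, I test $\L_\varepsilon h_\varepsilon = 0$ against $h_\varepsilon$ itself and integrate by parts over $\Omega$. Because $\mathbf{A}$ has the block form \eqref{assump:coeff}, the diffusion piece becomes $\varepsilon\int_\Omega \widehat{\nabla}h_\varepsilon\cdot\widehat{\mathbf{A}}\widehat{\nabla}h_\varepsilon\,\rho_\varepsilon = \varepsilon\pair{\widehat{\mathbf{S}}\widehat{\nabla}h_\varepsilon,\widehat{\mathbf{S}}\widehat{\nabla}h_\varepsilon}_\Omega$ (the antisymmetric part drops out), while the transport piece $\tfrac12 \int_\Omega (\bm{b}+\mathbf{B}z)\cdot\nabla h_\varepsilon^2\,\rho_\varepsilon$ collapses to a boundary integral by stationarity \eqref{assump:divfree}. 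Using the boundary values of $h_\varepsilon$ on $\Gamma(\Omega)$ and the null-flux identity $\int_{\partial\calA}(\bm{b}+\mathbf{B}z)\cdot\bm{n}_\Omega\,\rho_\varepsilon\,\sigma = 0$ (again a consequence of \eqref{assump:divfree} and the divergence theorem on $\calA$), the computation produces
\begin{equation*}
    \varepsilon\pair{\widehat{\mathbf{S}}\widehat{\nabla}h_\varepsilon,\widehat{\mathbf{S}}\widehat{\nabla}h_\varepsilon}_\Omega - \capa_\varepsilon(\calA,\calB) = \tfrac12\int_{\partial\calA\setminus\Gamma(\Omega)}(h_\varepsilon-1)^2\,\mathbf{B}z\cdot\bm{n}_\Omega\,\rho_\varepsilon\,\sigma + \tfrac12\int_{\partial\calB\setminus\Gamma(\Omega)}h_\varepsilon^2\,\mathbf{B}z\cdot\bm{n}_\Omega\,\rho_\varepsilon\,\sigma,
\end{equation*}
both integrals being non-positive since $\mathbf{B}z\cdot\bm{n}_\Omega\le 0$ on $\partial\Omega\setminus\Gamma(\Omega)$ by \Cref{thm:bdyreg}. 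The analogous identity for the adjoint combined with \Cref{thm:capsymm} yields $\varepsilon\pair{\widehat{\mathbf{S}}\widehat{\nabla}h_\varepsilon^\dag,\widehat{\mathbf{S}}\widehat{\nabla}h_\varepsilon^\dag}_\Omega \le \capa_\varepsilon(\calA,\calB)$, which will be needed below.

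For the upper bound, fix an admissible $(f,\bm{g})$ and apply Cauchy--Schwarz in the weighted inner product to $(\widehat{\mathbf{A}}\widehat{\nabla}f-\bm{g},\,\widehat{\mathbf{S}}\widehat{\nabla}h_\varepsilon^\dag)$. Integration by parts, using the constraint on $\bm{g}$, the equation $\L_\varepsilon^\dag h_\varepsilon^\dag = 0$, and the cancellation of the bulk term $(\bm{b}+\mathbf{B}z)\cdot\nabla(f h_\varepsilon^\dag)$ via \eqref{assump:divfree}, reduces the inner product to a single boundary integral on $\partial\calA$ (the contributions on $\partial\calB$ vanish because $f=0$ in trace on the regular part and $\widehat{\bm{n}}_\Omega=0$ on the irregular part):
\begin{equation*}
    \varepsilon\pair{\widehat{\mathbf{A}}\widehat{\nabla}f-\bm{g},\widehat{\mathbf{S}}\widehat{\nabla}h_\varepsilon^\dag}_{\overline{\calB}^\complement} = \int_{\partial\calA} f\,\bigl[\varepsilon\widehat{\mathbf{A}}^\transpose\widehat{\nabla}h_\varepsilon^\dag\cdot\widehat{\bm{n}}_\Omega + (1-h_\varepsilon^\dag)(\bm{b}+\mathbf{B}z)\cdot\bm{n}_\Omega\bigr]\rho_\varepsilon\,\sigma.
\end{equation*}
The bracketed integrand is exactly that of $\capa_\varepsilon^\dag(\calA,\calB)$ and is pointwise non-negative: on the regular part $\widehat{\mathbf{A}}^\transpose\widehat{\nabla}h_\varepsilon^\dag\cdot\widehat{\bm{n}}_\Omega\ge 0$ by the maximum principle (\Cref{thm:max}), and on the irregular part $(1-h_\varepsilon^\dag)\mathbf{B}z\cdot\bm{n}_\Omega\ge 0$ by \Cref{thm:bdyreg} applied to the adjoint problem. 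Since the obstacle $f\ge\mathcal{X}_\calA$ forces $f\ge 1$ on $\partial\calA$ in the trace sense, the integral is at least $\capa_\varepsilon^\dag = \capa_\varepsilon$ (\Cref{thm:capsymm}). Combining with the adjoint lower bound from the previous paragraph, Cauchy--Schwarz yields $J_{\varepsilon,\overline{\calB}^\complement}[f] \ge \capa_\varepsilon^2/\bigl(\varepsilon\pair{\widehat{\mathbf{S}}\widehat{\nabla}h_\varepsilon^\dag,\widehat{\mathbf{S}}\widehat{\nabla}h_\varepsilon^\dag}_\Omega\bigr) \ge \capa_\varepsilon$, and taking the infimum over $f$ closes the upper bound.

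In the non-degenerate H\"older case, \Cref{thm:bdyreg} forces $\Gamma(\Omega)=\Gamma^\dag(\Omega)=\partial\Omega$, every boundary correction above vanishes, and both inequalities in \eqref{eq:variationalcap} become equalities. To exhibit a minimizer, take $\bar f = (h_\varepsilon+h_\varepsilon^\dag)/2$ (extended by $1$ on $\calA$) and $\bar{\bm{g}} = (\mathbf{A}\nabla h_\varepsilon - \mathbf{A}^\transpose\nabla h_\varepsilon^\dag)/2$. Membership of $\bar f$ in $\rmH^1_0(\overline{\calB}^\complement)$ with $\bar f\ge\mathcal{X}_\calA$ follows from the boundary values of $h_\varepsilon,h_\varepsilon^\dag$ and \Cref{thm:max}, while summing the equations $\L_\varepsilon h_\varepsilon = 0$ and $\L_\varepsilon^\dag h_\varepsilon^\dag = 0$ shows $\bar{\bm{g}}\in\rmV_{\varepsilon,\bar f}(\overline{\calB}^\complement)$. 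The algebraic identity $\mathbf{A}\nabla\bar f - \bar{\bm{g}} = \mathbf{S}\nabla h_\varepsilon^\dag$ then yields $\varepsilon\pair{\mathbf{A}\nabla\bar f - \bar{\bm{g}},\mathbf{A}\nabla\bar f - \bar{\bm{g}}}_{\overline{\calB}^\complement} = \varepsilon\pair{\mathbf{S}\nabla h_\varepsilon^\dag,\mathbf{S}\nabla h_\varepsilon^\dag}_\Omega = \capa_\varepsilon$ by the now-equality adjoint lower bound, so $(\bar f,\bar{\bm{g}})$ attains the infimum. The main difficulty throughout is the careful bookkeeping of the boundary contributions on the irregular parts of $\partial\Omega$; \Cref{thm:bdyreg} is calibrated precisely so that the obstacle $f\ge\mathcal{X}_\calA$, the signs of $\mathbf{B}z\cdot\bm{n}_\Omega$ on $\partial\Omega\setminus\Gamma^{(\dag)}(\Omega)$, and the signs of the normal fluxes of $h_\varepsilon,h_\varepsilon^\dag$ all align correctly to make the chain of inequalities close.
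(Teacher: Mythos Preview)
Your argument is correct and follows essentially the same route as the paper: the lower bound by testing the equation against $h_\varepsilon$ and tracking the sign of $\mathbf{B}z\cdot\bm{n}_\Omega$ on the irregular boundary (your completed-square form $(h_\varepsilon-1)^2$ is just a cleaner rewriting of the paper's $h_\varepsilon-1-h_\varepsilon^2/2$), the upper bound via the pairing identity against $\widehat{\mathbf{S}}\widehat{\nabla}h_\varepsilon^\dag$ plus Cauchy--Schwarz and the adjoint lower bound, and the same explicit minimizer $(\bar f,\bar{\bm{g}})$ in the non-degenerate case. Two small slips to fix: the verification of $\bar{\bm{g}}\in\rmV_{\varepsilon,\bar f}$ comes from \emph{subtracting} (not summing) the two equations, and the pointwise non-negativity of $\widehat{\mathbf{A}}^\transpose\widehat{\nabla}h_\varepsilon^\dag\cdot\widehat{\bm{n}}_\Omega$ on $\partial\calA\cap\Gamma^\dag(\Omega)$ is a Hopf-type statement rather than \Cref{thm:max} itself---the paper sidesteps this by citing the supersolution property of the extended $h_\varepsilon^\dag$, which packages the positivity of the harmonic measure without a pointwise derivative argument.
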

The proof is by combining \cref{thm:caplower,thm:capupper}.
\begin{remark}
    In \cite{AAMN21}, the infimum is taken over a ``hypoelliptic space'' which essentially contains distributions and is not handy for practical computations.
    The choice of the space \( \rmH^1_{0}(\overline{\calB}^\complement) \) will likely preserve the infimum but exclude the minimizer.
\end{remark}
\begin{remark}
    The right-hand side of \cref{eq:variationalcap} corresponds to an obstacle problem. However, as shown in the theorem, the minimizer does not solve the equilibrium problem \cref{eq:equilibriumproblem}; instead, it is the average of the two adjoint equilibrium solutions.
\end{remark}
\begin{remark}
    An important question remains open here is that for the degenerate case, when does the equality hold in \cref{eq:variationalcap}? 
    The author believes that $h_\varepsilon = 1$ on $\partial\calA$ p.p.~(differs only on a set of capacity zero) is a necessary condition for the equality to hold as hinted from \cref{thm:caplower} and will be addressed in future work.
\end{remark}
}

\subsubsection{Quantitative Capacity Estimates for the Elliptic Case}
Throughout this section, we assume the following.

\begin{assumption}\label{assump:elliptic}
    Let \( \mathbf{A} \) be uniformly elliptic on \( \mathbb{R}^d \) and Hölder continuous of order \( \alpha \). 
    We also set \( \mathbf{B}=0 \), and \( \mathbf{b}(z) = -\bm{l}(z) \) for some smooth vector field \( \bm{l}: \mathbb{R}^d \to \mathbb{R}^d \), such that \cref{assump:divfree} holds.
\end{assumption}
The operator now becomes:
\begin{equation*}
    \L_\varepsilon f = \varepsilon e^{W/\varepsilon} \nabla\cdot\left[ e^{-W/\varepsilon}\left(\mathbf{A}\nabla f - {f\bm{l}}/{\varepsilon}\right) \right].
\end{equation*}

The first application of the variational capacity is the following rough upper and lower bounds on capacities. 
This result is known and can be proven using a non-variational approach presented in \cite{LS22}. 
However, we propose a new proof that utilizes the non-self-adjoint variational capacity.
\begin{theorem}\label{thm:roughbounds}
    Let \( \mathcal{A} \) be an open set, and let \( x \in \overline{\mathcal{A}}^\complement \). Denote by \( z^\star = z^\star(x, \overline{\mathcal{A}}) \) a point such that 
    \begin{equation*}
        W(z^\star) = \inf_{\gamma(t)} \sup_{t\in[0,1]} W(\gamma(t)),
    \end{equation*}
    where the infimum is taken over all continuous paths from \( x \) to \( \overline{\mathcal{A}} \).
    Then, there exist constants \( c_1 \) and \( c_2 \) independent of \( \varepsilon \) such that 
    \begin{equation*}
        \capa_\varepsilon(B_\varepsilon(z), \overline{\mathcal{A}}) \geq c_1 \varepsilon^d e^{-W(z^\star)/\varepsilon}, \quad \text{and} \quad
        \capa_\varepsilon (B_\varepsilon(z), \overline{\mathcal{A}}) \leq c_2 \varepsilon^{d-4} e^{-W(z^\star)/\varepsilon}.
    \end{equation*}
\end{theorem}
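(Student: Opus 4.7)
The plan is to exploit \cref{thm:variationalcap}, which in the elliptic H\"older-continuous setting is an equality, to derive both bounds: the upper bound from testing $J_{\overline{\calA}^\complement}$ on an explicit admissible pair $(f_\ast,g_\ast)$, and the lower bound from $\capa_\varepsilon(B_\varepsilon(z),\overline{\calA})\geq \varepsilon\int_\Omega \nabla h_\varepsilon^\transpose \widehat{\mathbf{S}}\,\nabla h_\varepsilon\,\rho_\varepsilon(\rmd z)$, to which a tube-of-paths argument is applied.

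For the upper bound, I would pick $f_\ast\in \rmH^1_0(\overline{\calA}^\complement)$ as a smooth cut-off equal to $1$ on a small enlargement of $B_\varepsilon(z)$ inside the connected component of $\{W<W(z^\star)\}$ containing $z$, and vanishing outside a thin enlargement of this set; the transition layer $T$ straddles the critical level through $z^\star$ and is tuned so that $e^{-W/\varepsilon}\lesssim e^{-W(z^\star)/\varepsilon}$ pointwise on $T$. The delicate step is the companion vector field $g_\ast\in \rmV_{\varepsilon,f_\ast}$: the naive $g_\ast=f_\ast\bm{l}/\varepsilon$ satisfies the divergence constraint by \cref{assump:divfree}, but its weighted $L^2$ mass is dominated by the bulk $\{f_\ast=1\}$ and produces an exponentially too-large bound. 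Instead I would localize $g_\ast$ to $T$ by solving $\varepsilon\nabla\cdot(e^{-W/\varepsilon}g_\ast)=e^{-W/\varepsilon}\bm{l}\cdot\nabla f_\ast$ on $T$ with $g_\ast\cdot\nu=0$ on $\partial T$. The compatibility condition for solvability reduces, via integration by parts and \cref{assump:divfree}, to $\int_{\partial\{f_\ast=1\}}\bm{l}\cdot\nu\,e^{-W/\varepsilon}\,\rmd\sigma=0$, which holds automatically by the divergence theorem applied to $\{f_\ast=1\}$. A Bogovskii-type bound then controls $\|g_\ast\|_{L^2(T,\rho_\varepsilon)}$ in terms of $\|\bm{l}\cdot\nabla f_\ast\|_{L^2(T,\rho_\varepsilon)}$. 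Plugging $(f_\ast,g_\ast)$ into $J$ and using ellipticity, boundedness of $\mathbf{A}$ and $\bm{l}$, and the pointwise Boltzmann bound on $T$ yields the claimed $c_2\varepsilon^{d-4}e^{-W(z^\star)/\varepsilon}$.

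For the lower bound, uniform ellipticity of $\widehat{\mathbf{S}}$ gives $\capa_\varepsilon\geq \varepsilon\lambda\int_\Omega|\nabla h_\varepsilon|^2\rho_\varepsilon(\rmd z)$. To bound this below, for each $y\in B_{\varepsilon/2}(z)$ I would prescribe a smooth family of curves $\gamma_y:[0,1]\to\Omega$ joining $y$ to $\overline{\calA}$ and passing through a fixed neighborhood of $z^\star$ on which $W\leq W(z^\star)+O(\varepsilon)$, with uniformly bounded length and Jacobian. Cauchy--Schwarz along $\gamma_y$, combined with $h_\varepsilon(y)=1$ and $h_\varepsilon|_{\overline{\calA}}=0$, yields
\[1\leq L(\gamma_y)\int_0^1 |\nabla h_\varepsilon(\gamma_y(t))|^2\,|\dot\gamma_y(t)|\,\rmd t.\]
Integrating over $y\in B_{\varepsilon/2}(z)$, swapping the order of integration by Fubini, and performing at each fixed $t$ the change of variables $y\mapsto \gamma_y(t)$ bounds the right-hand side by $Ce^{W(z^\star)/\varepsilon}\int_\Omega|\nabla h_\varepsilon|^2\rho_\varepsilon(\rmd z)$, while the left-hand side equals $|B_{\varepsilon/2}(z)|\sim \varepsilon^d$. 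Rearranging yields the desired $\capa_\varepsilon\geq c_1\varepsilon^d e^{-W(z^\star)/\varepsilon}$.

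The hard part will be the upper-bound construction of $g_\ast$: forcing the support of $g_\ast$ into the thin layer $T$ while retaining a usable weighted $L^2$ estimate demands an auxiliary Bogovskii-type solvability result with weighted norms, and tracking the precise dependence on the thickness of $T$ is the source of the extra $\varepsilon^{-4}$ factor by which the rough upper bound exceeds the rough lower bound.
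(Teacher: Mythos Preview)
Your lower bound is essentially the paper's: both pass through $\capa_\varepsilon \geq \varepsilon\lambda\int_\Omega|\nabla h_\varepsilon|^2\rho_\varepsilon$ and then invoke the symmetric (reversible) lower bound, which the paper simply cites from \cite{BEGK04} rather than re-deriving via the tube-of-paths computation you sketch.

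Your upper bound takes a genuinely different route. The paper also chooses $f$ as a cutoff on the sublevel component $\mathcal{V}$ with transition layer $(\mathcal{V})_\varepsilon\setminus\mathcal{V}$, but instead of solving a localized Bogovskii problem on the thin shell $T$ it constructs $\bm{g}=\nabla u$ where $u$ solves the \emph{global} weighted Dirichlet problem
\[
\varepsilon e^{W/\varepsilon}\nabla\cdot\bigl(e^{-W/\varepsilon}\nabla u\bigr)=\bm{l}\cdot\nabla f\quad\text{in }\overline{\calA}^\complement,\qquad u=0\text{ on }\partial\calA.
\]
The energy identity $\varepsilon\int|\nabla u|^2\rho_\varepsilon=\int u\,\bm{l}\cdot\nabla f\,\rho_\varepsilon$ is then split by Cauchy's inequality, and the term $\varepsilon\int_{\mathrm{supp}(\nabla f)}u^2\rho_\varepsilon$ is absorbed using an ordinary Poincar\'e inequality on the bounded set $\{W<\overline{H}\}\setminus\calA$ after observing that $\overline{H}-\underline{H}=O(\varepsilon)$ makes the exponential weight essentially constant there. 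This sidesteps entirely the weighted Bogovskii estimate on a degenerating thin layer that your plan requires; your approach can be made to work, but you must control the Bogovskii constant on a shell of thickness $\varepsilon$ around $\partial\mathcal{V}$ (not star-shaped, requiring a covering or John-domain argument) and then convert back from unweighted to weighted norms. The paper's global-Poisson-plus-Poincar\'e trick is shorter and makes the $\varepsilon^{-4}$ loss transparent: one $\varepsilon^{-1}$ from the Cauchy splitting, one from $|\nabla f|\lesssim\varepsilon^{-1}$ squared, against the layer volume $\varepsilon^{d}\cdot\varepsilon^{-1}$.
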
 

\begin{assumption}\label{assump:twowell}
    Let \( W \) be a smooth Morse function with two local minima \( \bm{m}_0 \) and \( \bm{m}_1 \), and without loss of generality, let \( 0 \) be the unique saddle point. We also set \( \mathcal{A} = B_\varepsilon(\bm{m}_1) \) and \( \mathcal{B} = B_\varepsilon(\bm{m}_0) \).
\end{assumption}
We write \( H := W(0),\, h_0 := W(\bm{m}_0),\, \text{and } h_1 := W(\bm{m}_1) \).
Next, denote by \( \mathbf{H}_z = \nabla^2 W(z) \) the Hessian of \( W \) and \( \mathbf{L}_z = \nabla \bm{l}(z) \) the Jacobian of \( \bm{l} \) at the point \( z \in \mathbb{R}^d \).
It will be shown in \cref{thm:hml} that the matrix \( \mathbf{H}_{0}\mathbf{A}_{0} + \mathbf{L}^\transpose_{0} \) has a unique negative eigenvalue, denoted by \( -\mu \).

\begin{theorem}\label{thm:capestimate}
    Let \cref{assump:ab,assump:elliptic,assump:twowell} hold. 
    Then, the following capacity estimate holds:
    \begin{equation}
        \capa_\varepsilon(B_\varepsilon(\bm{m}_1), B_\varepsilon(\bm{m}_0)) = (1 + o_\varepsilon(1)) \frac{1}{Z_\varepsilon} \frac{(2\pi\varepsilon)^{d/2}}{2\pi} \frac{\mu}{\sqrt{-\det(\mathbf{H}_0)}} e^{-H/\varepsilon}.
    \end{equation}
\end{theorem}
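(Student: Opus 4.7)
My plan is to combine the variational formula of \Cref{thm:variationalcap}---which is an equality in the non-degenerate case---with the rough estimates of \Cref{thm:roughbounds} for localization, in the spirit of the constructive method of \cite{LS22}, and to handle the mere H\"older regularity of $\mathbf{A}$ by a preliminary smoothing. After an appropriate change of variables, the entire contribution will concentrate near the saddle $0$ and reduce to an explicit Gaussian integral, from which the factor $\mu/\sqrt{-\det \mathbf{H}_0}$ emerges.

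\textbf{Smoothing and localization.} I would first mollify $\mathbf{A}$ on a scale $\eta>0$ to produce a smooth, uniformly elliptic $\mathbf{A}^\eta$ with $\norm{\mathbf{A}^\eta - \mathbf{A}}_{\rmL^\infty} \leq \eta$, adjusting $\bm{l}$ as necessary to preserve \eqref{assump:divfree}. Since both sides of \cref{eq:variationalcap} depend continuously on the symmetric part $\widehat{\mathbf{S}}$, the corresponding capacity satisfies $\capa_\varepsilon^\eta = (1 + O(\eta))\, \capa_\varepsilon$ uniformly in $\varepsilon$, and we can work with smooth coefficients and pass $\eta \to 0$ at the end. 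Next, fixing a small $\varepsilon$-independent $\delta > 0$, I would use \Cref{thm:roughbounds} applied between the wells and sets avoiding the saddle to argue that any near-optimal test pair contributes at most $O(\varepsilon^{d-4} e^{-(H+\eta_\delta)/\varepsilon})$ from outside $B_\delta(0)$, where $\eta_\delta > 0$. After multiplying by $Z_\varepsilon^{-1} \sim \varepsilon^{-d/2}$, this is of lower order than the target $e^{-H/\varepsilon}$, so the capacity asymptotics are governed by the variational problem restricted to $B_\delta(0)$.

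\textbf{Sharp upper bound.} In $B_\delta(0)$ I would Taylor-expand $W(z) = H + \tfrac{1}{2} z^\transpose \mathbf{H}_0 z + O(\abs{z}^3)$ and $\bm{l}(z) = \mathbf{L}_0 z + O(\abs{z}^2)$; the vanishing of $\bm{l}(0)$ follows from \eqref{assump:divfree}, smoothness, and $\nabla W(0) = 0$. Let $v$ be the left eigenvector of $\mathbf{M} := \mathbf{H}_0 \mathbf{A}_0 + \mathbf{L}_0^\transpose$ associated with the unique negative eigenvalue $-\mu$ (guaranteed by the cited \cref{thm:hml}), normalized via the bilinear form induced by $\widehat{\mathbf{S}}(0)^{-1}$. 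As test function, take $f_\varepsilon = 1$ on $B_\varepsilon(\bm{m}_1)$, $0$ on $B_\varepsilon(\bm{m}_0)$, and
\begin{equation*}
    f_\varepsilon(z) = p\!\left( \frac{\langle v, z\rangle}{\sqrt{\varepsilon}} \right), \qquad p'(t) \propto e^{-\mu t^2/(2 c_v)},
\end{equation*}
in $B_\delta(0)$, with smooth cutoff in an annulus. The companion flow $\bm{g}_\varepsilon \in \rmV_{\varepsilon,f_\varepsilon}$ will be built from a Helmholtz-type decomposition in the Gaussian weight so that the divergence constraint $\varepsilon e^{W/\varepsilon}\widehat{\nabla}\cdot(e^{-W/\varepsilon}\bm{g}_\varepsilon) = -\bm{l}\cdot\nabla f_\varepsilon$ is satisfied modulo negligible errors. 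Evaluating $J_{\varepsilon,\overline{\calB}^\complement}[f_\varepsilon]$ after the scaling $z = \sqrt{\varepsilon} y$, the integrand concentrates on the unstable 1D fibre $\langle v, z\rangle = O(\sqrt{\varepsilon})$, and the integral factors as a Gaussian in the stable $(d-1)$-dimensional subspace (contributing the stable eigenvalue part of $\sqrt{-\det \mathbf{H}_0}$) times a 1D Gaussian integral in the unstable direction (contributing $\mu$), yielding the claimed constant.

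\textbf{Lower bound and main obstacle.} For the matching lower bound I would apply the left-hand inequality of \cref{eq:variationalcap} to the symmetrized equilibrium $\bar f = (h_\varepsilon + h_\varepsilon^\dag)/2$; using interior $\mathrm{C}^{1,\alpha}$ regularity (available after the smoothing of Step 1) and the rough upper bound, show that $\bar f$ agrees with the error-function profile of Step 3 to leading order in $B_\delta(0)$, so that the same Gaussian computation yields a matching lower bound. The main obstacle I foresee is the construction of the admissible flow $\bm{g}_\varepsilon$: the off-diagonal transport couples $f_\varepsilon$ and $\bm{g}_\varepsilon$ through the weighted divergence constraint, and the correct asymptotic value of $J_{\varepsilon,\overline{\calB}^\complement}$ is achieved only when the pair $(f_\varepsilon, \bm{g}_\varepsilon)$ is built to exploit the spectral structure of $\mathbf{M}$; this is precisely what fixes the direction $v$ and produces the eigenvalue $\mu$ in the final formula. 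A secondary difficulty is the uniform-in-$\varepsilon$ control of the mollification error in Step 1, which the variational framework of \Cref{thm:variationalcap} makes transparent because of its continuous dependence on $\widehat{\mathbf{S}}$.
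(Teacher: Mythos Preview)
Your outline diverges from the paper's argument at a structural level, and the divergence hides two real gaps.

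\textbf{The paper does not run separate upper and lower bounds.} Instead of plugging a near-minimizing pair $(f_\varepsilon,\bm{g}_\varepsilon)$ into the variational functional $J$, the paper uses the \emph{equality} \cref{eq:capineq}: for any $f$ with $f\equiv 1$ on $\partial\calA$ and the \emph{trivial} admissible field $\bm{g}=\bm{l}f/\varepsilon$, one has exactly
\[
\capa_\varepsilon(\calA,\calB)=\varepsilon\pair*{\mathbf{A}\nabla f-\bm{l}f/\varepsilon,\,\mathbf{S}\nabla h_\varepsilon^\dag}_{\overline{\calB}^\complement}.
\]
One then substitutes $f=p_{\varepsilon,\eta}$, freezes $\mathbf{A}$ at the saddle, and estimates each piece. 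The unknown $h_\varepsilon^\dag$ survives in the expression, but only through the rough bounds of \cref{thm:roughbound} and through $\varepsilon\int|\nabla h_\varepsilon^\dag|^2\rho_\varepsilon\leq\capa_\varepsilon$ (giving the self-referential terms $\mathrm{II}_3,\mathrm{II}_5$), which yields a closed relation $(1+o_\varepsilon(1))\capa_\varepsilon=(1+o_\varepsilon(1))A_\varepsilon\omega_0+o_\varepsilon(1)\sqrt{A_\varepsilon\capa_\varepsilon}$ that one solves for $\capa_\varepsilon$.

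\textbf{Where your plan would stall.} First, the ``Helmholtz-type'' construction of a genuinely near-optimal $\bm{g}_\varepsilon$ is precisely the step the paper flags as ``highly challenging'' and deliberately avoids (see the Remark preceding the proof of \cref{thm:capestimate}); the trivial field $\bm{l}f/\varepsilon$ is far from the minimizer, so your upper bound via $J[f_\varepsilon]$ would not produce the correct constant without an argument you have not supplied. Second, your lower bound requires showing that $\bar f=(h_\varepsilon+h_\varepsilon^\dag)/2$ matches the error-function profile near the saddle; but the available a priori information on $h_\varepsilon,h_\varepsilon^\dag$ is only \cref{thm:roughbound}, which says nothing inside the saddle region $\mathcal{S}$ where all the action happens. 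Third, the preliminary mollification of $\mathbf{A}$ is unnecessary (the paper handles H\"older regularity by freezing $\mathbf{A}$ at $0$ and bounding $|\mathbf{A}(z)-\mathbf{A}_0|\le C\delta^\alpha$ on $\mathcal{S}$, which is the origin of the $\varepsilon^{\alpha/2}|\log\varepsilon|^{\alpha/2}$ error); your claimed uniform-in-$\varepsilon$ bound $\capa_\varepsilon^\eta=(1+O(\eta))\capa_\varepsilon$ is not obvious, since the equilibrium potential itself depends on $\mathbf{A}$.
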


\begin{remark}
    The \( o_\varepsilon(1) \) term now depends on the Hölder continuity exponent \( \alpha \) of \( \mathbf{A} \), specifically in a neighborhood of the saddle point, and is of the order \( \varepsilon^{\alpha/2} |\log \varepsilon|^{\alpha/2} \). 
\end{remark}

As a corollary, the capacity estimate yields the Eyring-Kramers formula.

\begin{theorem}
    Let \cref{assump:ab,assump:elliptic,assump:twowell} hold, and let \( w \) solve \cref{eq:landscape}. 
    Then, the following Eyring-Kramers formula holds:
    \begin{equation*}
        w(\bm{m}_1) = \mathbb{E}^{\bm{m}_1}[\tau_{B_\varepsilon(\bm{m}_0)}] = \left(1 + o_\varepsilon(1)\right) \frac{2\pi}{\mu} \sqrt{\frac{-\det\mathbf{H}_0}{\det \mathbf{H}_{\bm{m}_1}}} e^{(H-h_1)/\varepsilon}.
    \end{equation*}
\end{theorem}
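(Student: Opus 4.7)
The plan is to derive the Eyring-Kramers formula by combining the sharp capacity asymptotics of \cref{thm:capestimate} with a standard potential-theoretic identity that relates the mean exit time $w(\bm{m}_1)$ to the capacity and the adjoint equilibrium potential $h^\dag_\varepsilon$. The target identity is
\begin{equation*}
    w(\bm{m}_1)\,\capa_\varepsilon(B_\varepsilon(\bm{m}_1), B_\varepsilon(\bm{m}_0)) \;=\; (1+o_\varepsilon(1))\int_{\Omega} h^\dag_\varepsilon(z)\,\rho_\varepsilon(\rmd z),
\end{equation*}
with $\Omega = (\overline{\calA\cup\calB})^\complement$.

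To establish this identity, I would test the Poisson equation $-\L_\varepsilon w = 1$ against $h^\dag_\varepsilon$ in the measure $\rho_\varepsilon$ on $\Omega$. Integration by parts, using the divergence-free condition \cref{assump:divfree}, the identity $\L^\dag_\varepsilon h^\dag_\varepsilon = 0$ in $\Omega$, and the boundary conditions $h^\dag_\varepsilon|_{\partial\calB} = 0$, $w|_{\partial\calB} = 0$, $h^\dag_\varepsilon|_{\partial\calA} = 1$, collapses the bulk contribution and leaves a boundary integral on $\partial\calA$ whose integrand, once $w(z)$ is pulled out as $w(\bm{m}_1)(1+o_\varepsilon(1))$, matches exactly the integrand of $\capa_\varepsilon^\dag(\calA,\calB)$ appearing in \cref{def:cap}. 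The symmetry $\capa_\varepsilon^\dag = \capa_\varepsilon$ from \cref{thm:capsymm} then closes the identity.

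Next I would compute $\int_\Omega h^\dag_\varepsilon\,\rho_\varepsilon$ by Laplace's method. The rough bounds of \cref{thm:roughbounds} applied to $h^\dag_\varepsilon$, combined with the maximum principle \cref{thm:max}, imply that $h^\dag_\varepsilon \to 1$ uniformly on compact subsets of the basin of $\bm{m}_1$ and $h^\dag_\varepsilon \to 0$ on compact subsets of the basin of $\bm{m}_0$; outside a neighborhood of the two minima the contribution to $\int h^\dag_\varepsilon \rho_\varepsilon$ is exponentially subdominant. A Gaussian expansion of $W$ at $\bm{m}_1$ then yields
\begin{equation*}
    \int_\Omega h^\dag_\varepsilon\,\rho_\varepsilon(\rmd z) \;=\; (1+o_\varepsilon(1))\,\frac{(2\pi\varepsilon)^{d/2}}{Z_\varepsilon\,\sqrt{\det \mathbf{H}_{\bm{m}_1}}}\,e^{-h_1/\varepsilon}.
\end{equation*}
Dividing this by the capacity asymptotics of \cref{thm:capestimate} gives exactly the stated Eyring-Kramers formula.

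The main obstacle is the quantitative control of the oscillation of $w$ on $\partial B_\varepsilon(\bm{m}_1)$ needed in the first step: one must show $w(z)/w(\bm{m}_1) = 1 + o_\varepsilon(1)$ uniformly on $\partial B_\varepsilon(\bm{m}_1)$ with an error small enough not to pollute the leading exponential $e^{(H-h_1)/\varepsilon}$. Since the inhomogeneity $-\L_\varepsilon w = 1$ is negligible compared with $w(\bm{m}_1)\sim e^{(H-h_1)/\varepsilon}$ on an $\varepsilon$-scale around $\bm{m}_1$, the function $w$ is almost $\L_\varepsilon$-harmonic there, and under \cref{assump:elliptic} a combination of interior Schauder estimates and a Harnack-type oscillation bound should supply the required control. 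The remaining ingredients, namely the Laplace-method estimate and the algebraic division, are then routine.
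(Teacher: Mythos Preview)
Your proposal is correct and matches the standard potential-theoretic route the paper explicitly invokes: the paper omits the proof, stating it ``is standard, given the capacity estimate in \cref{thm:capestimate} and the rough bounds on the equilibrium potential in \cref{thm:roughbounds}'' and citing \cite{BEGK04,LMS19,LS22}. Indeed, the identity you target is precisely the computation sketched in \cref{sec:capdef}, namely $\int_{\overline{\calB}^\complement} h_\varepsilon^\dag \,\rho_\varepsilon = \int_{\overline{\calB}^\complement} w\,(-\L_\varepsilon^\dag h_\varepsilon^\dag)\,\rho_\varepsilon \approx w(\bm{m}_1)\,\capa_\varepsilon(\calA,\calB)$, with the Harnack-type oscillation control on $w$ over $\partial B_\varepsilon(\bm{m}_1)$ supplying the $\approx$; your Laplace-method evaluation of $\int h_\varepsilon^\dag\,\rho_\varepsilon$ via \cref{thm:roughbound} and division by \cref{thm:capestimate} then complete the argument exactly as intended.
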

The proof of the Eyring-Kramers formula is standard, given the capacity estimate in \cref{thm:capestimate} and the rough bounds on the equilibrium potential in \cref{thm:roughbounds}. 
This can be found in \cite{BEGK04,LMS19,LS22} and is omitted.
This result generalizes the findings from \cite{LMS19,LS22} and coincides with the result from \cite{BLPM22}.

\subsection{Outline of the paper}
The paper is organized as follows:
In \cref{sec:varitionalcap}, we motivate the definition of capacity and prove its symmetry as well as the variational formula in \cref{thm:capsymm} and \cref{thm:variationalcap}.
In \cref{sec:roughestimate}, we establish the rough bounds on the capacity, as stated in \cref{thm:roughbound}, under \cref{assump:elliptic}.
Finally, in \cref{sec:sharpestimate}, we derive the sharp capacity estimate in \cref{thm:capestimate} under \cref{assump:elliptic}, following the ideas presented in \cite{LS22}.

\section{The Variational Capacity}\label{sec:varitionalcap}
In this section, we first motivate the definition of capacity (see \cref{def:cap}) and then prove \cref{thm:variationalcap}. 

{
\subsection{Motivation for the Capacity}\label{sec:capdef}
Before justifying the definition of capacity, we first prove the following integration by part formulas.
\begin{lemma}\label{thm:intbyparts}
    Let $\calD\subset\R^{d}$ be an open set with $\mathrm{C}^{2,\alpha}$ boundary, and let $f\in \mathrm{C}^1(\calD)\cap \mathrm{C}(\overline{\calD})$ be a weak solution to the Dirichlet problem for \cref{eq:main} such that there exists uniformly bounded $\widehat{\mathbf{A}}$-nomral derivative on $\partial\calD$. 
    Then, for any $\varphi \in \mathrm{C}_c^1(\overline{\calD})$, we have
    \begin{multline*}
        \int_{\calD} \left(\varepsilon \widehat{\mathbf{A}}\widehat{\nabla} f \cdot\widehat{\nabla}\varphi + f \left(\bm{b} + \mathbf{B}z\right) \cdot\nabla \varphi\right) \,\rho_\varepsilon(\rmd z) \\
        = \frac{1}{Z_\varepsilon}\int_{\partial\calD} \left(\varepsilon\widehat{\mathbf{A}}\widehat{\nabla}f\cdot\widehat{\bm{n}}_{\calD}  + f \left(\bm{b} - \mathbf{B}z\right)\cdot\bm{n}_{\calD} \right)\varphi\, e^{-W/\varepsilon} \sigma(\rmd z),
    \end{multline*}
    where $\bm{n}_{\calD}$ is the unit outward normal to $\calD$, $\widehat{\bm{n}}_{\calD}$ is the part corresponding to the last $n_0$ coordinates, and $\sigma(\rmd z)$ is the surface measure on $\partial\calD$.
\end{lemma}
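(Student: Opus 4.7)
The plan is to reduce the identity to a classical integration by parts on a smooth inner exhaustion of $\calD$ and then pass to the limit. Because $\partial\calD$ is $\mathrm{C}^{2,\alpha}$, for all sufficiently small $\delta>0$ the set $\calD_\delta:=\{z\in\calD:\mathrm{dist}(z,\partial\calD)>\delta\}$ is a $\mathrm{C}^{2,\alpha}$ subdomain with $\overline{\calD_\delta}\subset\calD$ whose boundary is parametrized by $z_0\mapsto z_0-\delta\bm{n}_\calD(z_0)$ for $z_0\in\partial\calD$. Since $f\in\mathrm{C}^1(\calD)$, on $\overline{\calD_\delta}$ the function $f$ is $\mathrm{C}^1$ up to $\partial\calD_\delta$ and classical Stokes applies.

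On $\calD_\delta$ I would split the left-hand side into the elliptic and the transport pieces. For the elliptic term, integrating by parts in the last $n_0$ variables and invoking the pointwise form of the equation
\[
\varepsilon\widehat{\nabla}\cdot\left(e^{-W/\varepsilon}\widehat{\mathbf{A}}\widehat{\nabla}f\right)=-e^{-W/\varepsilon}(\bm{b}+\mathbf{B}z)\cdot\nabla f,
\]
which follows from $\L_\varepsilon f=0$ together with the block structure of $\mathbf{A}$ and the interior $\mathrm{C}^1$-regularity of $f$, yields the boundary flux $\varepsilon(\widehat{\mathbf{A}}\widehat{\nabla}f)\cdot\widehat{\bm{n}}_{\calD_\delta}\,\varphi$ on $\partial\calD_\delta$ plus the bulk term $+\int_{\calD_\delta}\varphi(\bm{b}+\mathbf{B}z)\cdot\nabla f\,e^{-W/\varepsilon}\rmd z$. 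For the transport term, integration by parts combined with \cref{assump:divfree}---read as the invariance statement $\nabla\cdot((\bm{b}+\mathbf{B}z)e^{-W/\varepsilon})=0$---produces the boundary flux $f\varphi(\bm{b}+\mathbf{B}z)\cdot\bm{n}_{\calD_\delta}$ together with the opposite bulk term. The two interior contributions cancel exactly, leaving the claimed formula on $\partial\calD_\delta$. Passing $\delta\downarrow 0$ on the bulk side is then routine: since $\varphi\in\mathrm{C}_c^1(\overline{\calD})$ and $f\in\mathrm{C}^1(\calD)\cap\mathrm{C}(\overline{\calD})$, one has locally integrable dominating functions on $\mathrm{supp}\,\varphi\cap\calD$, and the transport boundary integral converges by uniform continuity of $f,\varphi,\bm{b}$ together with the convergence $\bm{n}_{\calD_\delta}\to\bm{n}_\calD$ supplied by the $\mathrm{C}^{2,\alpha}$ regularity of $\partial\calD$.

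The principal obstacle lies in the elliptic boundary integral, because $f$ is not assumed $\mathrm{C}^1$ up to $\partial\calD$; only the $\widehat{\mathbf{A}}$-normal derivative exists and is uniformly bounded. Parametrizing $\partial\calD_\delta$ by $z_0-\delta\bm{n}_\calD(z_0)$, one must compare the inner flux $\varepsilon\widehat{\mathbf{A}}(z_0-\delta\bm{n}_\calD)\widehat{\nabla}f(z_0-\delta\bm{n}_\calD)\cdot\widehat{\bm{n}}_{\calD_\delta}$ with the definitional limit along $-\delta\widehat{\bm{n}}_\calD$. The two shifts differ only in the first $d-n_0$ coordinates, and on the regular part of $\partial\calD$ (where $\widehat{\bm{n}}_\calD\neq 0$) this discrepancy can be absorbed into an $o(1)$ error using the $\mathrm{C}^{2,\alpha}$ geometry, while on the exceptional set $\{\widehat{\bm{n}}_\calD=0\}$, which by \cref{thm:bdyreg} has surface measure zero, the contribution vanishes in the limit. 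Once this reconciliation is in hand, the assumed uniform bound on the $\widehat{\mathbf{A}}$-normal derivative provides the dominating function on $\partial\calD$, and dominated convergence delivers the identity.
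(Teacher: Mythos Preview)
Your approach is correct but differs from the paper's. The paper does not use an inner exhaustion with the divergence theorem; instead it works entirely within the weak formulation by testing against $\psi=\varphi\phi_\delta$, where $\phi_\delta$ is a smooth cutoff equal to $1$ on $\calD_{-\delta}=\{z\in\calD:\mathrm{dist}(z,\partial\calD)>\delta\}$ and $0$ outside $\calD_{-\delta/2}$. Expanding the product rule gives two integrals: one with $\phi_\delta$ multiplying the desired bulk integrand, and one with $\nabla\phi_\delta$ which, as $\delta\to 0$, approximates $-\bm{n}_\calD$ against the surface measure and manufactures the boundary term directly. No pointwise form of the equation and no Gauss--Green for fields with merely $L^1$ divergence are invoked.

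The practical difference is that the cutoff route never leaves the weak formulation: you do not need to argue that $e^{-W/\varepsilon}\widehat{\mathbf{A}}\widehat{\nabla}f$ has an $L^1$ weak divergence, nor appeal to a generalized divergence theorem on $\calD_\delta$. Your route recovers the same identity but at the cost of that extra justification (which you handle correctly via the weak equation and $f\in\mathrm{C}^1$). Both methods face the same genuine subtlety in the limit---controlling the elliptic boundary flux when only the $\widehat{\mathbf{A}}$-normal derivative (defined via approach along $\widehat{\bm{n}}_\calD$, not $\bm{n}_\calD$) is assumed bounded---and both resolve it at the same level of detail. One small point: your appeal to \cref{thm:bdyreg} to dismiss $\{\widehat{\bm{n}}_\calD=0\}$ as null is unnecessary, since on that set the flux $\widehat{\mathbf{A}}\widehat{\nabla}f\cdot\widehat{\bm{n}}_\calD$ vanishes pointwise anyway.
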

\begin{proof}
    The proof is standard provided there is enough regularity on the boundary and the solution $f$.
    Indeed, we choose a test function $\psi = \varphi \phi_\delta$ where $\phi_\delta$ is a smooth cutoff function that is 1 on $\calD_{-\delta}:=\{z\in\calD:\mathrm{dist}(z,\partial\calD)>\delta\} $ and 0 on $\calD_{-\delta/2}$ which is defined analogously.
    Plugging in $\psi$ into the weak formulation of \cref{eq:main}, we obtain 
    \begin{multline*}
        \int_{\calD} \left(\varepsilon \widehat{\mathbf{A}}\widehat{\nabla} f \cdot\widehat{\nabla}\varphi + f \left(\bm{b} + \mathbf{B}z\right) \cdot\nabla \varphi\right)\phi_\delta \,\rho_\varepsilon(\rmd z) \\
        + \int_{\calD} \left(\varepsilon \widehat{\mathbf{A}}\widehat{\nabla} f \cdot\widehat{\nabla}\phi_\delta + f \left(\bm{b} + \mathbf{B}z\right) \cdot\nabla \phi_\delta\right)\varphi \,\rho_\varepsilon(\rmd z) = 0.
    \end{multline*}
    Since $\partial\calD$ is smooth enough, $\nabla\phi_{\delta}$ can be chosen to approximate $-\bm{n}_{\calD}$, and the result follows by taking the limit as $\delta\to 0$.
\end{proof}

In classical potential theory, capacity describes the amount of charge induced on a capacitor, with the charge concentrating on the surface of the capacitor. 
The solution $h_\varepsilon$ to the equilibrium problem \cref{eq:equilibriumproblem} represents the potential distribution, with potential 1 on $\calA$ and 0 on $\calB$.  
By extending $h_\varepsilon$ to be 1 on $\calA$ and 0 on $\overline{\calB}$, 
it follows that $h_\varepsilon$ is lower semi continuous and $-\L_\varepsilon h_\varepsilon$ is a surface measure (known as the harmonic measure for the Laplace equation) on $\partial\calA$ and represents the distribution of charges. 
To see this, we take a test function $\varphi\in \mathrm{C}_c^1(\calB^\complement)$ such that $\varphi = 0$ on $\Gamma^\dag(\overline{\calB}^\complement)$, and compute in the weak sense. 
Using integration by parts \cref{thm:intbyparts} and \cref{thm:bdyreg}, 
we derive
\begin{align*}
    &\int_{\overline{\calB}^\complement} -\L_\varepsilon h_\varepsilon \varphi \,\rho_\varepsilon(\rmd z) := \int_{\overline{\calB}^\complement} \left(\varepsilon \widehat{\mathbf{A}}\widehat{\nabla} h \cdot\widehat{\nabla}\varphi + h_\varepsilon \left(\bm{b} + \mathbf{B}z\right) \cdot\nabla \varphi\right) \,\rho_\varepsilon(\rmd z)
    \\
    &= \int_{\Omega} \left(\varepsilon \widehat{\mathbf{A}}\widehat{\nabla} h \cdot\widehat{\nabla}\varphi + h_\varepsilon \left(\bm{b} + \mathbf{B}z\right) \cdot\nabla \varphi\right) \,\rho_\varepsilon(\rmd z)
    + \int_{\calA} \left(\bm{b} +\mathbf{B}z\right)\cdot\nabla \varphi \,\rho_\varepsilon(\rmd z)
    \\
    &= \frac{1}{Z_\varepsilon}\int_{\partial\Omega} \left(\varepsilon\widehat{\mathbf{A}}\widehat{\nabla}h_\varepsilon\cdot\widehat{\bm{n}}_{\Omega}  + h_\varepsilon \left(\bm{b} - \mathbf{B}z\right)\cdot\bm{n}_{\Omega} \right)\varphi \,e^{-W/\varepsilon} \sigma(\rmd z)
    + \frac{1}{Z_\varepsilon}\int_{\partial\calA} \varphi \left(\bm{b} + \mathbf{B}z\right) \bm{n}_{\calA}  e^{-W/\varepsilon}\sigma(\rmd z).
\end{align*}
Using the fact from \cref{thm:bdyreg} that $\widehat{n}_{\Omega}\varphi = 0$ and $h_\varepsilon \varphi = 0$ on $\partial\calB$ we get
\begin{equation*}
    \int_{\overline{\calB}^\complement} -\L_\varepsilon h_\varepsilon \varphi \,\rho_\varepsilon(\rmd z) = \frac{1}{Z_\varepsilon} \int_{\partial\calA} \left(\varepsilon(\widehat{\mathbf{A}} \widehat{\nabla} h_{\varepsilon})\cdot \widehat{\bm{n}}_{\Omega} +  (h_\varepsilon - 1) \left(\bm{b} + \mathbf{B}z\right)\cdot\bm{n}_{\Omega}   \right) \varphi \,e^{-W/\varepsilon}  \sigma(\rmd z).
\end{equation*}
Therefore, the definition in \cref{def:cap} is justified.

The previous calculation shows that $-\L_\varepsilon h_\varepsilon$ is a measure on $\partial\calA$ and similarly for $-\L_\varepsilon^\dag h_\varepsilon^\dag$. 
Now, let $w$ be the landscape function defined in \cref{eq:landscape}. 
Following the approach in \cite{BEGK04}, we derive 
\begin{equation*}
    \int_{\overline{\calB}^\complement} h_\varepsilon^\dag \,\rho_\varepsilon(\rmd z) = \int_{\overline{\calB}^\complement} (-\L_\varepsilon w) h_\varepsilon^\dag \,\rho_\varepsilon(\rmd z) = \int_{\overline{\calB}^\complement} w (-\L_\varepsilon^\dag h_\varepsilon^\dag) \,\rho_\varepsilon(\rmd z) \approx \sup_{z\in\partial\calA} w(z)\times \capa_\varepsilon(\calA,\calB),
\end{equation*}
where $\approx$ depends on the Harnack inequality for the equation \cref{eq:landscape}, see e.g.~\cite{AR22}.
This again validates our definition of capacity and demonstrates its usefulness for metastability estimates involving the general Fokker-Planck-Kolmogorov operator \cref{eq:main}.

}

\begin{delayedproof}{thm:capsymm}
    By the definition of the capacity in \cref{def:cap}, and noting that $h_\varepsilon^\dag = 1$ on $\Gamma^\dag(\Omega)$, we can introduce it into the integral:
    \begin{equation*}
        \capa_\varepsilon(\calA, \calB) = \frac{1}{Z_\varepsilon} \int_{\partial\calA} h_\varepsilon^\dag \times \left(\varepsilon(\widehat{\mathbf{A}} \widehat{\nabla} h_{\varepsilon})\cdot \widehat{\bm{n}}_{\Omega} +  (h_\varepsilon - 1) \left(\bm{b} + \mathbf{B}z\right)\cdot\bm{n}_{\Omega}   \right) \,e^{-W/\varepsilon}  \sigma(\rmd z).
    \end{equation*}
    The lemma then follows by integration by parts, \cref{thm:intbyparts}.
\end{delayedproof}

\subsection{Proof of the Variational Capacity}
To prove \cref{thm:variationalcap}, we prove the two inequalities in \cref{eq:variationalcap} separately in the following two propositions.
\begin{proposition}\label{thm:caplower}
    Let \cref{assump:ab,assump:equilibrium} hold. 
    Then the following inequality holds:
    \begin{equation*}
        \varepsilon \pair*{ \widehat{\mathbf{S}}\widehat{\nabla} h_\varepsilon, \widehat{\mathbf{S}} \widehat{\nabla} h_\varepsilon }_{\Omega} \leq \capa_\varepsilon(\calA,\calB).
    \end{equation*}
    Moreover, if $\Gamma(\Omega) = \partial\Omega$ a.e., then 
    \begin{equation*}
        \varepsilon \pair*{ \widehat{\mathbf{S}}\widehat{\nabla} h_\varepsilon,\widehat{\mathbf{S}} \widehat{\nabla} h_\varepsilon }_{\Omega}  = \capa_\varepsilon(\calA,\calB).
    \end{equation*}
    Similar results hold for $\capa_\varepsilon^\dag(\calA,\calB)$ as well. 
\end{proposition}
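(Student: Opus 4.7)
My strategy is to plug $h_\varepsilon$ itself into the weak formulation and then exploit \eqref{assump:divfree} together with the boundary regularity from \Cref{thm:bdyreg} to compare the resulting Dirichlet integral with the boundary expression for $\capa_\varepsilon(\calA,\calB)$. The first observation is that the antisymmetric part of $\widehat{\mathbf{A}}$ is annihilated in the pairing, so
\begin{equation*}
    \varepsilon\pair*{\widehat{\mathbf{S}}\widehat{\nabla} h_\varepsilon,\widehat{\mathbf{S}}\widehat{\nabla} h_\varepsilon}_\Omega = \varepsilon\int_\Omega \widehat{\mathbf{A}}\widehat{\nabla} h_\varepsilon\cdot\widehat{\nabla} h_\varepsilon\,\rho_\varepsilon(\rmd z),
\end{equation*}
exactly the quantity produced by the bilinear form of $\L_\varepsilon$.

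I would then apply \Cref{thm:intbyparts} on $\Omega$ with $f=\varphi=h_\varepsilon$. The use of $h_\varepsilon$ as a test function is justified by the uniform boundedness of its $\widehat{\mathbf{A}}$-normal derivative imposed in \Cref{assump:equilibrium} together with a standard mollification and cutoff that exploits the decay of $\rho_\varepsilon$ at infinity. The convective volume term is rewritten as $h_\varepsilon(\bm{b}+\mathbf{B}z)\cdot\nabla h_\varepsilon=\tfrac12(\bm{b}+\mathbf{B}z)\cdot\nabla h_\varepsilon^2$, and combined with \eqref{assump:divfree} the interior part vanishes leaving only $\tfrac{1}{2Z_\varepsilon}\int_{\partial\Omega}h_\varepsilon^2(\bm{b}+\mathbf{B}z)\cdot\bm{n}_\Omega\,e^{-W/\varepsilon}\sigma(\rmd z)$. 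Subtracting produces the clean identity
\begin{equation*}
    \varepsilon\int_\Omega\widehat{\mathbf{A}}\widehat{\nabla} h_\varepsilon\cdot\widehat{\nabla} h_\varepsilon\,\rho_\varepsilon(\rmd z) = \frac{1}{Z_\varepsilon}\int_{\partial\Omega}\left(\varepsilon h_\varepsilon\widehat{\mathbf{A}}\widehat{\nabla} h_\varepsilon\cdot\widehat{\bm{n}}_\Omega+\tfrac{1}{2}h_\varepsilon^2(\bm{b}+\mathbf{B}z)\cdot\bm{n}_\Omega\right)e^{-W/\varepsilon}\sigma(\rmd z).
\end{equation*}

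To relate this to the capacity I split $\partial\Omega=\partial\calA\cup\partial\calB$ into regular and irregular pieces. On $\Gamma(\Omega)\cap\partial\calB$ the integrand vanishes since $h_\varepsilon=0$; on $\Gamma(\Omega)\cap\partial\calA$ one has $h_\varepsilon=1$, which reproduces the diffusive term of the capacity plus a surplus $\tfrac12(\bm{b}+\mathbf{B}z)\cdot\bm{n}_\Omega$. On the irregular parts $\widehat{\bm{n}}_\Omega=0$ by \Cref{thm:bdyreg}, which kills the diffusive term, and the block structure in \eqref{assump:coeff} forces $\bm{b}\cdot\bm{n}_\Omega=\widehat{\bm{b}}\cdot\widehat{\bm{n}}_\Omega=0$, leaving only $\tfrac12 h_\varepsilon^2\mathbf{B}z\cdot\bm{n}_\Omega$. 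The crucial balancing device is the identity $\int_{\partial\calA}(\bm{b}+\mathbf{B}z)\cdot\bm{n}_\Omega\,e^{-W/\varepsilon}\sigma(\rmd z)=0$, obtained by applying the divergence theorem to $(\bm{b}+\mathbf{B}z)e^{-W/\varepsilon}$ inside $\calA$ and invoking \eqref{assump:divfree}; this converts the regular surplus on $\partial\calA$ into a contribution supported on $\partial\calA\setminus\Gamma(\Omega)$. After straightforward algebra the difference between the left-hand side of the target inequality and $\capa_\varepsilon(\calA,\calB)$ collapses to
\begin{equation*}
    \frac{1}{2Z_\varepsilon}\int_{\partial\calA\setminus\Gamma(\Omega)}(h_\varepsilon-1)^2\,\mathbf{B}z\cdot\bm{n}_\Omega\,e^{-W/\varepsilon}\sigma(\rmd z)+\frac{1}{2Z_\varepsilon}\int_{\partial\calB\setminus\Gamma(\Omega)}h_\varepsilon^2\,\mathbf{B}z\cdot\bm{n}_\Omega\,e^{-W/\varepsilon}\sigma(\rmd z),
\end{equation*}
which is nonpositive by the sign condition $\mathbf{B}z\cdot\bm{n}_\Omega\leq 0$ on $\partial\Omega\setminus\Gamma(\Omega)$ from \Cref{thm:bdyreg}. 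When $\Gamma(\Omega)=\partial\Omega$ a.e.\ both remainders vanish, giving equality.

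The main obstacle is precisely this boundary bookkeeping: the surplus $\tfrac12(\bm{b}+\mathbf{B}z)\cdot\bm{n}_\Omega$ coming from the $h_\varepsilon^2$-manipulation has no definite sign on the regular part of $\partial\calA$, and only after invoking the global $e^{-W/\varepsilon}$-weighted divergence identity does it recombine with the irregular pieces into clean squared expressions on which the sign condition of \Cref{thm:bdyreg} can act. For the adjoint inequality, the same programme runs with $h_\varepsilon^\dag$ and $\widehat{\mathbf{A}}^\transpose$ in place of $h_\varepsilon$ and $\widehat{\mathbf{A}}$; the transport term in $\L_\varepsilon^\dag$ carries the opposite sign and the adjoint capacity involves $(1-h_\varepsilon^\dag)$, but these two sign flips are exactly compensated by the reversed inequality $\mathbf{B}z\cdot\bm{n}_\Omega\geq 0$ on $\partial\Omega\setminus\Gamma^\dag(\Omega)$, producing a nonpositive squared boundary remainder of the same shape.
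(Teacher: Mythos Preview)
Your proof is correct and follows the same skeleton as the paper's: both apply the integration-by-parts lemma with $h_\varepsilon$ against itself, rewrite the convective term via $h_\varepsilon(\bm b+\mathbf Bz)\cdot\nabla h_\varepsilon=\tfrac12(\bm b+\mathbf Bz)\cdot\nabla h_\varepsilon^2$, invoke the weighted divergence-free identity $\int_{\partial\calA}(\bm b+\mathbf Bz)\cdot\bm n_\Omega\,e^{-W/\varepsilon}\sigma=0$ from \eqref{assump:divfree}, and then use the block structure \eqref{assump:coeff} together with the sign condition $\mathbf Bz\cdot\bm n_\Omega\le 0$ on $\partial\Omega\setminus\Gamma(\Omega)$ from \Cref{thm:bdyreg}.

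The one genuine difference is in how the $\partial\calA$ piece is handled. The paper keeps the polynomial $h_\varepsilon-1-\tfrac12 h_\varepsilon^2$ and bounds it by $[-1,-\tfrac12]$ via the maximum principle (\Cref{thm:max}), then combines with the divergence identity to get the crude inequality $\mathrm{II}\ge 0$. You instead subtract the divergence identity with weight $\tfrac12$ and complete the square, obtaining the \emph{exact} remainder $\tfrac12(h_\varepsilon-1)^2\mathbf Bz\cdot\bm n_\Omega$ on $\partial\calA\setminus\Gamma(\Omega)$. This is a mild improvement: it removes the dependence on the maximum principle and makes the equality case $\Gamma(\Omega)=\partial\Omega$ a.e.\ immediate from the form of the remainder, rather than requiring a separate observation that $\mathrm{I}$ and $\mathrm{II}$ vanish.
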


\begin{proof}
    To prove the result, we first introduce $h_\varepsilon$ into the definition of capacity (see \cref{def:cap}). 
    However, we can only introduce it into the derivative term, namely:
    \begin{equation*}
        \capa_\varepsilon(\calA,\calB) = \frac{1}{Z_\varepsilon}\int_{\partial\calA}  \left(h_\varepsilon \times \varepsilon\widehat{\mathbf{A}} \widehat{\nabla} h_\varepsilon \cdot \widehat{\bm{n}}_\Omega  +  (h_\varepsilon - 1)\left(\bm{b} + \mathbf{B}z\right)\cdot \bm{n}_{\Omega}\right) \, e^{-W/\varepsilon} \sigma(\rmd z). 
    \end{equation*} 
    By using integration by parts \cref{thm:intbyparts}, we get:
    \begin{align*}
        \capa_\varepsilon(\calA,\calB) = &\int_{\Omega} \varepsilon \widehat{\mathbf{A}}\widehat{\nabla} h_\varepsilon \cdot \widehat{\nabla} h_\varepsilon \,\rho_\varepsilon(\rmd z) - \int_{\Omega} h_\varepsilon \left(\bm{b} + \mathbf{B}z\right) \cdot \nabla h_\varepsilon \,\rho_\varepsilon(\rmd z) \\
        &+ \int_{\partial \calA} (h_\varepsilon - 1)\left(\bm{b} + \mathbf{B}z\right) \bm{n}_{\Omega} e^{-W/\varepsilon} \sigma(\rmd z)/Z_\varepsilon \\
        = & \int_{\Omega} \varepsilon \widehat{\mathbf{A}}\widehat{\nabla} h_\varepsilon \cdot \widehat{\nabla} h_\varepsilon \,\rho_\varepsilon(\rmd z) \underbrace{- \int_{\partial\calB} \frac{h_\varepsilon^2}{2} \left(\bm{b} + \mathbf{B}z\right) \cdot \bm{n}_\Omega \, e^{-W/\varepsilon} \sigma(\rmd z) / Z_\varepsilon}_{\mathrm{I}} \\
        &+ \underbrace{\int_{\partial \calA} \left(h_\varepsilon - 1 - \frac{h_\varepsilon^2}{2}\right) \left(\bm{b} + \mathbf{B}z\right) \bm{n}_{\Omega} e^{-W/\varepsilon} \sigma(\rmd z)/Z_\varepsilon}_{\mathrm{II}}.
    \end{align*}
    
    First, observe that for \( z \in \partial\Omega \setminus \Gamma(\Omega) \), we must have \( \widehat{n}_\Omega(z) = 0 \) and \( \mathbf{B}z \cdot \bm{n}_{\Omega}(z) \leq 0 \) from \cref{thm:bdyreg}.
    Using \cref{assump:coeff}, it follows that:
    \begin{equation*}
        \mathrm{I} = - \int_{\partial\calB \setminus \Gamma(\Omega)} \frac{h_\varepsilon^2}{2} \left(\bm{b} + \mathbf{B}z\right) \cdot \bm{n}_\Omega \, e^{-W/\varepsilon} \sigma(\rmd z) / Z_\varepsilon = - \int_{\partial\calB \setminus \Gamma(\Omega)} \frac{h_\varepsilon^2}{2} \mathbf{B}z \cdot \bm{n}_\Omega \, e^{-W/\varepsilon} \sigma(\rmd z) / Z_\varepsilon \geq 0.
    \end{equation*}
    
    For \( \mathrm{II} \), we use a similar approach to derive:
    \begin{multline*}
        \mathrm{II} = \int_{\partial \calA \cap \Gamma(\Omega)}  - \frac{1}{2} \left(\bm{b} + \mathbf{B}z\right) \bm{n}_{\Omega} e^{-W/\varepsilon} \sigma(\rmd z)/Z_\varepsilon \\
        + \int_{\partial \calA \setminus \Gamma(\Omega)} \left(h_\varepsilon - 1 - \frac{h_\varepsilon^2}{2}\right) \left(\bm{b} + \mathbf{B}z\right) \bm{n}_{\Omega} e^{-W/\varepsilon} \sigma(\rmd z)/Z_\varepsilon.
    \end{multline*}
    
    By the maximum principle, \cref{thm:max}, we have \( 0 \leq h_\varepsilon \leq 1 \). Hence, \( -1 \leq h_\varepsilon - 1 - h_\varepsilon^2/2 \leq -1/2 \). It follows that
    \begin{equation*}
        \mathrm{II} \geq \frac{1}{2} \int_{\partial\calA} (\bm{b} + \mathbf{B}z) \cdot \bm{n}_{\Omega} \, e^{-W/\varepsilon} \sigma(\rmd z)/Z_\varepsilon \geq 0,
    \end{equation*}
    where the surface integral vanishes due to \cref{assump:divfree}. 

    Combining the estimates for \( \mathrm{I} \) and \( \mathrm{II} \), we obtain:
    \begin{equation*}
        \capa_\varepsilon(\calA, \calB) \geq \int_{\Omega} \varepsilon \widehat{\mathbf{A}} \widehat{\nabla} h_\varepsilon \cdot \widehat{\nabla} h_\varepsilon \, \rho_\varepsilon(\rmd z) = \varepsilon \int_{\Omega} \widehat{\mathbf{S}} \widehat{\nabla} h_\varepsilon \cdot \widehat{\nabla} h_\varepsilon \, \rho_\varepsilon(\rmd z) = \varepsilon \pair*{ \widehat{\mathbf{S}} \widehat{\nabla} h_\varepsilon, \widehat{\mathbf{S}} \widehat{\nabla} h_\varepsilon }_{\Omega}.
    \end{equation*}
    This completes the proof of the inequality. 

    Finally, if $\Gamma(\Omega) = \partial\Omega$ a.e., it is obvious that the term $\mathrm{I}$ and $\mathrm{II}$ vanishes which proves the equality.
\end{proof}

{
Next, we show the right inequality in \cref{thm:variationalcap}.
\begin{proposition}\label{thm:capupper}
    Let \cref{assump:ab,assump:equilibrium} hold. 
    Then the following inequality holds true
    \begin{equation}\label{eq:capupper}
        \capa_{\varepsilon} (\calA, \calB) \leq \inf_{f\in \rmH^1_{ 0}(\overline{\calB}^\complement), \, f\geq \mathcal{X}_{\calA}} J_{\varepsilon,\overline{\calB}^\complement} [f].
    \end{equation}
    Moreover, if $\mathbf{A}$ is non-degenerate and H\"older continuous, then 
    \begin{equation*}
        \capa_{\varepsilon}(\mathcal{A}, \calB) = \inf_{f\in \rmH^1_{ 0}(\overline{\mathcal{B}}^\complement),\, f\geq \mathcal{X}_\calA} J_{\varepsilon,\overline{\calB}^\complement}[f],
    \end{equation*}
    and the minimum is attained at $\bar{f} = (h_\varepsilon + h_\varepsilon^\dag)/2$, and $\bar{g} = (\widehat{\mathbf{A}}\widehat{\nabla} h_\varepsilon - \widehat{\mathbf{A}}^\transpose \widehat{\nabla} h_\varepsilon^\dag)/2$.
\end{proposition}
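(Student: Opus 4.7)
The plan is a Cauchy-Schwarz argument pivoted on the adjoint symmetric flux $u^\dag := \widehat{\mathbf{S}}\widehat{\nabla} h_\varepsilon^\dag$, with $h_\varepsilon^\dag$ extended by $1$ on $\calA$ so that $\widehat{\nabla} h_\varepsilon^\dag \equiv 0$ there. Setting $u := \widehat{\mathbf{A}}\widehat{\nabla} f - \bm{g}$ for any admissible pair $(f, \bm{g})$, Cauchy-Schwarz in the $\widehat{\mathbf{S}}^{-1}$-weighted inner product gives
\begin{equation*}
    \varepsilon\pair*{u, u}_{\overline{\calB}^\complement} \geq \frac{\bigl(\varepsilon\pair*{u, u^\dag}_{\overline{\calB}^\complement}\bigr)^2}{\varepsilon\pair*{u^\dag, u^\dag}_{\overline{\calB}^\complement}},
\end{equation*}
and the adjoint analogue of \cref{thm:caplower} combined with \cref{thm:capsymm} bounds the denominator by $\capa_\varepsilon(\calA, \calB)$. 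Hence the proof reduces to the cross-term lower bound $\varepsilon\pair*{u, u^\dag} \geq \capa_\varepsilon(\calA, \calB)$; taking the infimum over $\bm{g}$ and then over $f$ then delivers \cref{eq:capupper}.

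The key step is the cross-term identity. Expanding $\varepsilon\pair*{u, u^\dag}$ into the two pieces $\varepsilon\int \widehat{\nabla} f \cdot \widehat{\mathbf{A}}^\transpose\widehat{\nabla} h_\varepsilon^\dag\,\rho_\varepsilon$ and $-\varepsilon\int\bm{g}\cdot\widehat{\nabla} h_\varepsilon^\dag\,\rho_\varepsilon$, I reduce both to integrals over $\Omega$ since $\widehat{\nabla} h_\varepsilon^\dag\equiv 0$ on $\calA$. Applying \cref{thm:intbyparts}---using $\L_\varepsilon^\dag h_\varepsilon^\dag = 0$ in the first piece and the divergence constraint of $\bm{g}\in\rmV_{\varepsilon,f}$ in the second---the bulk transport terms combine, via \cref{assump:divfree}, into a pure boundary integral on $\partial\Omega$. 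The remaining $\bm{g}$-dependent boundary piece on $\partial\calA$ is then reprocessed by applying the divergence identity of $\bm{g}$ on $\calA$ itself, producing a transport integral that cancels the $f$-transport contribution on $\partial\calA$ exactly; contributions on the irregular set $\partial\Omega\setminus\Gamma^\dag(\Omega)$ drop out by \cref{thm:bdyreg} since $\widehat{\bm{n}}_\Omega\equiv 0$ there. The net result is the clean identity
\begin{equation*}
    \varepsilon\pair*{u, u^\dag} = -\frac{1}{Z_\varepsilon}\int_{\partial\calA} \varepsilon f\,(\widehat{\mathbf{A}}^\transpose\widehat{\nabla} h_\varepsilon^\dag)\cdot\widehat{\bm{n}}_\calA\,e^{-W/\varepsilon}\sigma(\rmd z),
\end{equation*}
remarkably independent of $\bm{g}$. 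Hopf's boundary-point lemma applied to the nonnegative adjoint deficit $1 - h_\varepsilon^\dag$ (vanishing on $\Gamma^\dag(\Omega)\cap\partial\calA$), together with uniform ellipticity of $\widehat{\mathbf{A}}$, gives $-(\widehat{\mathbf{A}}^\transpose\widehat{\nabla} h_\varepsilon^\dag)\cdot\widehat{\bm{n}}_\calA\geq 0$ pointwise on the regular part; combined with the trace inequality $f\geq 1$ on $\partial\calA$ (from $f\geq\mathcal{X}_\calA$), this yields $\varepsilon\pair*{u, u^\dag}\geq \capa_\varepsilon^\dag(\calA,\calB) = \capa_\varepsilon(\calA,\calB)$.

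For the equality in the elliptic case, the candidate pair $\bar{f} = (h_\varepsilon + h_\varepsilon^\dag)/2$, $\bar{g} = (\widehat{\mathbf{A}}\widehat{\nabla} h_\varepsilon - \widehat{\mathbf{A}}^\transpose\widehat{\nabla} h_\varepsilon^\dag)/2$ formally solves the divergence constraint---adding $\L_\varepsilon h_\varepsilon = 0$ and $\L_\varepsilon^\dag h_\varepsilon^\dag = 0$ gives $\bar{g}\in\rmV_{\varepsilon,\bar{f}}$ in the distributional sense---and a direct computation yields $\widehat{\mathbf{A}}\widehat{\nabla}\bar{f} - \bar{g} = \widehat{\mathbf{S}}\widehat{\nabla} h_\varepsilon^\dag = u^\dag$, so $J_{\varepsilon,\overline{\calB}^\complement}[\bar{f}]$ equals $\varepsilon\pair*{u^\dag,u^\dag} = \capa_\varepsilon(\calA,\calB)$ (using the equality case of \cref{thm:caplower}), matching the just-proved lower bound; a routine mollification handles the fact that $\bar{g}$ may lie just outside $\widehat{\rmH}^1_{\mathrm{div}}$ due to its normal-jump across $\partial\calA$. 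The main technical obstacle is the delicate bookkeeping in the cross-term computation---in particular the twofold use of the divergence constraint on $\bm{g}$ (once on $\Omega$ to eliminate $\bm{g}$ from the bulk, then on $\calA$ to kill the remaining boundary contribution on $\partial\calA$) combined with the cancellation of transport terms through \cref{assump:divfree} and the discarding of irregular-boundary contributions via \cref{thm:bdyreg}---which is precisely what calibrates $J$ to deliver exactly $\capa_\varepsilon(\calA,\calB)$ rather than a strictly larger quantity.
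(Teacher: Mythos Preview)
Your overall architecture---Cauchy--Schwarz pivoted on $u^\dag=\widehat{\mathbf{S}}\widehat{\nabla} h_\varepsilon^\dag$, reduction to a cross-term lower bound, then the explicit minimizer in the elliptic case---is exactly the paper's. The equality argument (verifying $\bar{\bm g}\in\rmV_{\varepsilon,\bar f}$ and computing $\widehat{\mathbf A}\widehat\nabla\bar f-\bar{\bm g}=\widehat{\mathbf S}\widehat\nabla h_\varepsilon^\dag$) also matches.

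The gap is in the degenerate case. Your claimed ``clean identity'' is incorrect there: the transport terms do \emph{not} cancel exactly. Carrying out your own bookkeeping carefully (first piece via $\L_\varepsilon^\dag h_\varepsilon^\dag=0$, second piece via the $\bm g$-constraint on $\Omega$, then reprocessing the $\bm g$-flux on $\partial\calA$ through the constraint on $\calA$) one finds
\[
\varepsilon\pair*{u,u^\dag}_{\overline{\calB}^\complement}
=\frac{1}{Z_\varepsilon}\int_{\partial\calA} f\,\Bigl(\varepsilon\widehat{\mathbf A}^\transpose\widehat\nabla h_\varepsilon^\dag\cdot\widehat{\bm n}_\Omega+(1-h_\varepsilon^\dag)(\bm b+\mathbf Bz)\cdot\bm n_\Omega\Bigr)e^{-W/\varepsilon}\sigma(\rmd z),
\]
i.e.\ $f$ times the full $\capa_\varepsilon^\dag$-density. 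The residual $(1-h_\varepsilon^\dag)(\bm b+\mathbf Bz)\cdot\bm n_\Omega$ term lives precisely on $\partial\calA\setminus\Gamma^\dag(\Omega)$, where $\widehat{\bm n}_\Omega=0$ kills the $\widehat{\bm n}$-pieces but \emph{not} the full-$\bm n$ transport piece. Your Hopf argument then controls only the gradient part of the density, and the gradient part alone (at $f=1$) is \emph{not} $\capa_\varepsilon^\dag$---it misses exactly this transport contribution---so you cannot conclude $\varepsilon\pair{u,u^\dag}\geq\capa_\varepsilon$. The paper instead observes that the bracketed expression is the density of the nonnegative measure $-\L_\varepsilon^\dag h_\varepsilon^\dag$ on $\partial\calA$ (the adjoint equilibrium, extended by $1$, is a weak supersolution), so $f\geq 1$ gives the inequality directly without Hopf. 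In the elliptic case your identity becomes correct since $h_\varepsilon^\dag\equiv 1$ on $\partial\calA$, which is why your argument goes through there.
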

\begin{proof}
    We begin by proving the lower bound for the variational formula \cref{eq:capupper}.

    First, we prove that for any $f\in \rmH^1_0(\overline{\calB}^\complement)$ with $f \geq \mathcal{X}_\calA$, we have 
    \begin{equation}\label{eq:capineq}
        \varepsilon \pair*{ \widehat{\mathbf{A}} \widehat{\nabla} f - \bm{g}, \widehat{\mathbf{S}} \widehat{\nabla} h_\varepsilon^\dag }_{\overline{\calB}^\complement} \geq \capa_{\varepsilon}^\dag(\calA, \calB) = \capa_\varepsilon(\calA, \calB).
    \end{equation}
    Indeed, by the definition of $\bm{g}$ in \cref{eq:functional}, and using integration by parts, \cref{thm:intbyparts}, along with the fact that $f h_\varepsilon^\dag = 0$ on $\partial \calB$ by \cref{thm:bdyreg}, we compute:
    \begin{align*}
        & \varepsilon \pair*{ \widehat{\mathbf{A}} \widehat{\nabla} f - \bm{g}, \widehat{\mathbf{S}} \widehat{\nabla} h_\varepsilon^\dag }_{\overline{\calB}^\complement} = \varepsilon \int_{\overline{\calB}^\complement} \left( \widehat{\mathbf{A}} \widehat{\nabla} f - \bm{g} \right) \cdot \widehat{\nabla} h_\varepsilon^\dag \, \rho_\varepsilon(\rmd z) \\
        &= \varepsilon \int_{\Omega} \widehat{\mathbf{A}} \widehat{\nabla} f \cdot \widehat{\nabla} h_\varepsilon^\dag \, \rho_\varepsilon(\rmd z) - \int_{\overline{\calB}^\complement} \left( (\bm{b} + \mathbf{B}z) \cdot \nabla f \right) h_\varepsilon^\dag \, \rho_\varepsilon(\rmd z) 
        \\
        &= \varepsilon \int_{\Omega} \widehat{\mathbf{A}} \widehat{\nabla} f \cdot \widehat{\nabla} h_\varepsilon^\dag \, \rho_\varepsilon(\rmd z) - \int_{\Omega} \left( (\bm{b} + \mathbf{B}z) \cdot \nabla f \right) h_\varepsilon^\dag \, \rho_\varepsilon(\rmd z)
        + \int_{\partial \calA} f (\bm{b} + \mathbf{B}z) \cdot \bm{n}_{\calA} \, e^{-W/\varepsilon} \sigma(\rmd z)
        \\
        &= \frac{1}{Z_\varepsilon} \int_{\partial \calA} f \left( \varepsilon \widehat{\mathbf{A}}^\transpose \widehat{\nabla} h_\varepsilon^\dag \cdot \widehat{\bm{n}}_{\Omega} + (1 - h_\varepsilon^\dag) (\bm{b} + \mathbf{B}z) \cdot \bm{n}_\Omega \right) e^{-W/\varepsilon} \sigma(\rmd z).
    \end{align*}
    Thus, using $f \geq \mathcal{X}_\calA$ yields \cref{eq:capineq}. 

    In particular, if $f \equiv 1$ on $\partial \calA$, we obtain:
    \begin{equation*}
        \varepsilon \pair*{ \widehat{\mathbf{A}} \widehat{\nabla} f - \bm{g}, \widehat{\mathbf{S}} \widehat{\nabla} h_\varepsilon^\dag }_{\overline{\calB}^\complement} = \capa_{\varepsilon}^\dag(\calA, \calB) = \capa_\varepsilon(\calA, \calB).
    \end{equation*}
    This observation is crucial in both \cite{LMS19, LS22}. 

    To prove \cref{eq:capupper}, we simply apply Cauchy's inequality to \cref{eq:capineq} and use \cref{thm:caplower}. 
    
    Furthermore, if $\mathbf{A}$ is non-degenerate and H\"older continuous, then $\Gamma(\Omega) = \partial\Omega$ by \cref{thm:bdyreg}. 
    In addition, $h_\varepsilon,h_\varepsilon^\dag\in \rmH^1(\Omega)$ and $h_\varepsilon,h_\varepsilon^\dag \equiv 1$ on $\partial\calA$.
    By extending $h_\varepsilon,h_\varepsilon^\dag$ to be 1 on $\calA$, they belong to $\rmH^1_0(\overline{\calB}^\complement)$ which are valid candidates for the variational formula.
    We claim that the minimum is achieved by $\bar{f} = (h_\varepsilon + h_\varepsilon^\dag)/2$ and $\bar{\bm{g}} = (\widehat{\mathbf{A}} \widehat{\nabla} h_\varepsilon - \widehat{\mathbf{A}}^\transpose \widehat{\nabla} h_\varepsilon^\dag)/2$. 
    It is easy to verify by definition that $\bar{\bm{g}} \in \rmV_{\varepsilon, \overline{\calB}^\complement}[\bar{f}]$, namely,
    \begin{equation*}
        \varepsilon e^{W/\varepsilon} \widehat{\nabla} \cdot \left[ e^{-W/\varepsilon} \bar{\bm{g}} \right] = - (\bm{b} + \mathbf{B}z) \cdot \nabla \bar{f}.
    \end{equation*}
    Plugging these choices into the variational formulation and using \cref{eq:capupper} and \cref{thm:capsymm} gives:
    \begin{equation*}
        \capa_\varepsilon^\dag (\calA, \calB) \leq \inf_{f \in \rmH^1_{0}(\overline{\mathcal{B}}^\complement) ,\, f \geq \mathcal{X}_\calA} J_{\varepsilon, \overline{\calB}^\complement}[f] \leq \varepsilon \pair{ \widehat{\mathbf{S}} \widehat{\nabla} h_\varepsilon^\dag, \widehat{\mathbf{S}} \widehat{\nabla} h_\varepsilon^\dag }_\Omega \leq \capa_\varepsilon^\dag (\calA, \calB),
    \end{equation*}
    which completes the proof.
\end{proof}
}

\section{Rough Bounds on the Equilibrium Potential}\label{sec:roughestimate}
Throughout this section, we assume \cref{assump:elliptic}, namely $\mathbf{A}$ is uniformly elliptic and H\"older continuous, which automatically implies \cref{assump:equilibrium} from the classical regularity theory for second-order elliptic equations.

Following the pioneering work in \cite{BEGK04}, to prove the quantitative capacity, we first need to establish rough bounds on the equilibrium potential. This is a crucial first step. We will use the variational formula \cref{eq:variationalcap} to derive some rough bounds on the equilibrium potentials \( h_\varepsilon \) and \( h_\varepsilon^\dag \). Other approaches to establishing these rough bounds can be found in \cite{LMS19, LS22}. Our approach of utilizing the non-reversible variational formula is, to the best of our knowledge, novel.

The rough estimate relies on the following well-known renewal estimate, which has been treated in \cite{BEGK04,LMS19}.
\begin{proposition}\label{thm:renewal}
    Let \cref{assump:ab,assump:elliptic} hold, and let \( z \in \Omega \) be such that \( \mathrm{dist}(z, \overline{\calA \cup \calB}) > c\varepsilon \). Let \( B_r(z) \) denote the ball of radius \( r \) centered at \( z \). Then, for any \( r \leq c\varepsilon \), where \( c < \infty \), there exists a finite positive constant \( C \) (depending only on \( c \) and \( \norm{\nabla W(z)}_\infty \)) such that
    \begin{equation}\label{eq:renewal}
        h_{\calA, \calB}(z) \leq C \frac{\capa(B_r(z), \overline{\calA})}{\capa(B_r(z), \overline{\calB})}.
    \end{equation} 
\end{proposition}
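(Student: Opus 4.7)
The plan is to follow the classical renewal-estimate strategy of \cite{BEGK04,LMS19}, adapted to the non-self-adjoint elliptic setting of \cref{assump:elliptic}. Since $\mathbf{A}$ is uniformly elliptic and H\"older continuous, the equation \eqref{eq:process} admits a well-defined diffusion, and we have the probabilistic representation $h_{\calA,\calB}(z)=\mathbb{P}^z[\tau_\calA<\tau_\calB]$. The key idea is to decompose the trajectory from $z$ into a sequence of excursions from $B_r(z)$ and to control each excursion by a capacity ratio.

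First, I would set up the renewal decomposition. Let $\sigma_0=0$ and define recursively the hitting times of $\partial B_{2r}(z)$ followed by returns to $\overline{B_r(z)}$; let $N$ be the index of the first excursion that escapes to $\calA\cup\calB$. By the strong Markov property,
\begin{equation*}
    h_{\calA,\calB}(z)\;\le\;\sum_{k\ge 0} \bigl(p_{\mathrm{ret}}\bigr)^k\,p_\calA,
\end{equation*}
where $p_\calA:=\sup_{y\in\partial B_r(z)}\mathbb{P}^y[\tau_\calA<\tau_{\calA\cup\calB\cup B_r(z)^{\mathrm{ret}}}]$ denotes the maximal probability, starting from the shell $\partial B_r(z)$, to hit $\calA$ before returning to $B_r(z)$ or hitting $\calB$, and $p_{\mathrm{ret}}$ the analogous return probability to $B_r(z)$ before hitting $\calA\cup\calB$. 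The series sums to $p_\calA/(1-p_{\mathrm{ret}})$, and the crucial observation is that $1-p_{\mathrm{ret}}$ is at least the probability to hit $\calB$ before returning, which we shall bound below by $\capa(B_r(z),\overline{\calB})$.

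Second, I would translate the two excursion probabilities into capacities. Using \cref{def:cap} and \cref{thm:intbyparts}, the capacity $\capa(B_r(z),\overline{\calA})$ admits a representation as a surface integral of the normal derivative of the associated equilibrium potential on $\partial B_r(z)$. Combining this with Harnack's inequality for $\L_\varepsilon$ and $\L_\varepsilon^\dag$ (applicable since $\mathrm{dist}(z,\overline{\calA\cup\calB})>c\varepsilon\ge r$, so Harnack chains of fixed length cover $\partial B_r(z)$ with constants depending only on $c$, the ellipticity of $\mathbf{A}$, and $\norm{\nabla W(z)}_\infty$), I would obtain
\begin{equation*}
    p_\calA\;\le\;C\,\frac{\capa(B_r(z),\overline{\calA})}{\rho_\varepsilon(B_r(z))}\,T(z),\qquad
    1-p_{\mathrm{ret}}\;\ge\;c\,\frac{\capa(B_r(z),\overline{\calB})}{\rho_\varepsilon(B_r(z))}\,T(z),
\end{equation*}
with the same mean-excursion-length factor $T(z)$ on both sides. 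Dividing yields \eqref{eq:renewal}, the $T(z)$ and $\rho_\varepsilon(B_r(z))$ factors cancelling. The symmetry identity $\capa_\varepsilon=\capa_\varepsilon^\dag$ from \cref{thm:capsymm} would be used to interchange $\L_\varepsilon$ and $\L_\varepsilon^\dag$ wherever the adjoint equilibrium potential is more convenient.

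The main obstacle is the comparison step converting hitting probabilities into capacities in the non-reversible regime: in the reversible case this comparison is exact and comes from the Dirichlet-form identity, but here I must replace it by two-sided Harnack estimates together with a Poisson-kernel/normal-derivative representation of the equilibrium measure on $\partial B_r(z)$. The dependence of the Harnack constant on $\norm{\nabla W(z)}_\infty$ is precisely what enters the constant $C$ in the statement, since on the small ball $B_r(z)$ of radius $r\le c\varepsilon$ the weighted drift $\bm{l}/\varepsilon$ has bounded oscillation comparable to $\norm{\nabla W(z)}_\infty$. A secondary technical point is ensuring the constants remain uniform in $\varepsilon$; this follows because the operator $\L_\varepsilon$ rescales to a fixed elliptic operator on balls of radius $\varepsilon$ after the change of variables $z\mapsto(z-z_\ast)/\sqrt{\varepsilon}$, bringing the problem onto a scale where classical elliptic theory applies with $\varepsilon$-independent constants.
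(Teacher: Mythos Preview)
The paper does not actually prove this proposition: it is stated as a known result ``which has been treated in \cite{BEGK04,LMS19}'' and left without proof. Your plan to reproduce the renewal-estimate argument from precisely those references is therefore exactly the paper's (implicit) approach.

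Your outline is essentially correct, but one step is too loose to count as a proof. The bounds
\[
    p_\calA \le C\,\frac{\capa(B_r(z),\overline{\calA})}{\rho_\varepsilon(B_r(z))}\,T(z),
    \qquad
    1-p_{\mathrm{ret}} \ge c\,\frac{\capa(B_r(z),\overline{\calB})}{\rho_\varepsilon(B_r(z))}\,T(z),
\]
with an unspecified ``mean-excursion-length factor'' $T(z)$ that conveniently cancels, are not how the argument actually runs. In \cite{BEGK04} (reversible) and \cite{LMS19} (non-reversible elliptic) the comparison between escape probabilities and capacities goes through the last-exit representation of the equilibrium measure on $\partial B_r(z)$ together with Harnack on the small ball; the normalization that cancels is simply the total mass of that equilibrium measure, not an extra time factor. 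You should replace the $T(z)$ heuristic by this last-exit/Harnack step explicitly. Two minor points: (i) the probabilistic representation is legitimate here despite the paper's earlier caveat about It\^o processes, since under \cref{assump:elliptic} the martingale problem for a uniformly elliptic operator with H\"older coefficients is well-posed; (ii) your observation that the rescaling $z\mapsto (z-z_\ast)/\sqrt{\varepsilon}$ makes the Harnack constant $\varepsilon$-independent, with the drift contribution controlled by $\norm{\nabla W(z)}_\infty$, is exactly the mechanism behind the stated dependence of $C$.
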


To utilize \cref{eq:renewal}, we now prove some rough bounds on capacities.

The lower bound is easier to establish due to the following observation: from \cref{thm:variationalcap} and \cref{assump:elliptic}, we obtain, for a constant \( C \) that depends only on the ellipticity constant, the inequality
\begin{equation*}
    \capa_\varepsilon(\calA, \calB) = \varepsilon \int_{\overline{\calB}^\complement} \nabla h_\varepsilon \cdot \mathbf{S} \nabla h_\varepsilon \, \rho_\varepsilon(\rmd z) \geq C \varepsilon \int_{\overline{\calB}^\complement} \abs{\nabla h_\varepsilon}^2 \, \rho_\varepsilon(\rmd z) \geq C \capa^{\mathrm{symm}}_\varepsilon(\calA, \calB),
\end{equation*}
since by definition,
\begin{equation*}
    \capa^{\mathrm{symm}}_\varepsilon(\calA, \calB) = \inf_{f \in \rmH^1_0(\overline{\calB}^\complement), \, f \geq \mathcal{X}_\calA} \varepsilon \int_{\overline{\calB}^\complement} \abs{\nabla f}^2 \, \rho_\varepsilon(\rmd z).
\end{equation*}
Using the lower bound estimate from \cite{BEGK04}, we automatically obtain the following result:

\begin{lemma}\label{thm:roughlower}
    Let \cref{assump:ab,assump:elliptic} hold, and let \( z \in \overline{\calA}^\complement \). Denote by \( z^\star = z^\star(z, \overline{\calA}) \) a point such that
    \begin{equation*}
        W(z^\star) = \inf_{\gamma(t)} \sup_{t \in [0,1]} W(\gamma(t)),
    \end{equation*}
    where the infimum is taken over all continuous paths \( \gamma(t) \) leading from \( z \) to \( \overline{\calA} \).
    Then, there exists a constant \( C \) independent of \( \varepsilon \) such that
    \begin{equation*}
        \capa_\varepsilon(B_\varepsilon(z), \overline{\calA}) \geq C \varepsilon^d e^{-W(z^\star)/\varepsilon}.
    \end{equation*}
\end{lemma}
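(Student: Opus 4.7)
The plan is to reduce the non-reversible lower bound to the classical reversible one using the variational formula established in \cref{thm:variationalcap,thm:caplower}. Under \cref{assump:elliptic} we have $n_0=d$, so $\widehat{\mathbf{A}}=\mathbf{A}$ and $\widehat{\nabla}=\nabla$, and \cref{thm:bdyreg} yields $\Gamma(\Omega)=\partial\Omega$. Hence the equality branch of \cref{thm:caplower} applies and gives
\begin{equation*}
    \capa_\varepsilon(B_\varepsilon(z),\overline{\calA}) = \varepsilon\int_{\Omega} \nabla h_\varepsilon\cdot\mathbf{S}\,\nabla h_\varepsilon\,\rho_\varepsilon(\rmd z),
\end{equation*}
where $\mathbf{S}=(\mathbf{A}+\mathbf{A}^\transpose)/2$ is uniformly elliptic with the same constant $\lambda>0$ as $\mathbf{A}$. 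Since the extension of $h_\varepsilon$ by $1$ on $B_\varepsilon(z)$ lies in $\rmH^1_0(\overline{\calA}^\complement)$ and majorises $\mathcal{X}_{B_\varepsilon(z)}$, it is admissible in the Dirichlet principle for the symmetric capacity
\begin{equation*}
    \capa_\varepsilon^{\mathrm{symm}}(B_\varepsilon(z),\overline{\calA}) = \inf_{f\in \rmH^1_0(\overline{\calA}^\complement),\, f\geq\mathcal{X}_{B_\varepsilon(z)}} \varepsilon\int |\nabla f|^2 \rho_\varepsilon(\rmd z),
\end{equation*}
which yields the chain $\capa_\varepsilon(B_\varepsilon(z),\overline{\calA}) \geq \lambda\varepsilon\int|\nabla h_\varepsilon|^2\rho_\varepsilon(\rmd z) \geq \lambda\,\capa_\varepsilon^{\mathrm{symm}}(B_\varepsilon(z),\overline{\calA})$.

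The problem then reduces to the classical reversible lower bound from \cite{BEGK04}. There one constructs a test function equal to $1$ on $B_\varepsilon(z)$, vanishing outside a tube of width of order $\sqrt{\varepsilon}$ around a mountain pass path $\gamma^\star$ from $z$ to $\overline{\calA}$, and interpolating monotonically along $\gamma^\star$ according to a one-dimensional model at the saddle point $z^\star$. A Laplace expansion along $\gamma^\star$ captures the exponential factor $e^{-W(z^\star)/\varepsilon}$; Gaussian integration in the $(d-1)$ transverse directions contributes $\varepsilon^{(d-1)/2}$; and the one-dimensional Laplace contribution at the peak together with the $\varepsilon$-scale of the source ball accounts for the remaining power, yielding the overall $\varepsilon^d$ prefactor.

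The main obstacle is conceptual rather than computational: one must invoke the equality case of \cref{thm:caplower}, which requires $\Gamma(\Omega)=\partial\Omega$ so that the non-reversible capacity genuinely equals a symmetric Dirichlet energy (the inequality alone would not suffice to sandwich $\capa_\varepsilon$ between the symmetric energy of $h_\varepsilon$ and $\capa_\varepsilon^{\mathrm{symm}}$). Uniform ellipticity supplies this via \cref{thm:bdyreg}, and the BEGK04 estimate then transfers verbatim since it depends only on the Gibbs structure $\rho_\varepsilon \propto e^{-W/\varepsilon}$ and not on reversibility of the underlying dynamics.
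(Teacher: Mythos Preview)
Your reduction to the reversible case---equate $\capa_\varepsilon$ with the $\mathbf{S}$-Dirichlet energy of $h_\varepsilon$, use uniform ellipticity to drop to $\lambda\varepsilon\int|\nabla h_\varepsilon|^2\rho_\varepsilon$, observe that $h_\varepsilon$ is admissible in the symmetric Dirichlet principle, and then quote \cite{BEGK04}---is exactly the argument the paper gives in the paragraph preceding the lemma. Two points in your exposition are off, however.

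First, you do not need the equality case of \cref{thm:caplower}: the inequality already reads $\varepsilon\langle\mathbf{S}\nabla h_\varepsilon,\mathbf{S}\nabla h_\varepsilon\rangle_\Omega\leq\capa_\varepsilon$, which is precisely the direction required for a lower bound on $\capa_\varepsilon$. Second, and more substantively, your sketch of the \cite{BEGK04} argument is backwards. Exhibiting a single test function supported in a tube around a mountain-pass path and invoking Laplace asymptotics gives an \emph{upper} bound on the symmetric capacity via the Dirichlet principle, not a lower bound. The lower bound in \cite{BEGK04} comes instead from a flow-based (Thomson-type) argument or, equivalently, from restricting the Dirichlet form to a one-dimensional tube and bounding the infimum over \emph{all} admissible functions from below. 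Your formal proof (reduce and cite) is fine, but the paragraph describing what is being cited should be corrected.
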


However, for the upper bound, we are unable to find a direct comparison between the reversible and non-reversible capacities when \( \bm{l} \neq 0 \). Nevertheless, we can prove the upper bound by constructing a rough candidate \( f \) and the corresponding admissible vector field \( \bm{g} \), and then plugging them into the variational formula \cref{eq:variationalcap}.

\begin{lemma}\label{thm:roughupper}
    Let \cref{assump:ab,assump:elliptic} hold, and let \( z \in \overline{\calA}^\complement \) be such that \( \mathrm{dist}(z, \calA) \geq c > \varepsilon \) for some fixed small constant \( c \) independent of \( \varepsilon \). Denote by \( z^\star = z^\star(z, \overline{\calA}) \) a point such that
    \begin{equation*}
        W(z^\star) = \inf_{\gamma(t)} \sup_{t \in [0, 1]} W(\gamma(t)),
    \end{equation*}
    where the infimum is taken over all continuous paths leading from \( z \) to \( \overline{\calA} \). Then, there exists a constant \( C \) independent of \( \varepsilon \) such that
    \begin{equation*}
        \capa_\varepsilon(B_\varepsilon(z), \overline{\calA}) \leq C \varepsilon^{d-4} e^{-W(z^\star)/\varepsilon}.
    \end{equation*}
\end{lemma}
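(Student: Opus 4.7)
The plan is to invoke the upper bound in \cref{thm:variationalcap}, which reduces the lemma to exhibiting a test pair $(f,\bm{g})$ with $f\in \rmH^1_0(\overline{\calA}^\complement)$, $f\geq \mathcal{X}_{B_\varepsilon(z)}$, and $\bm{g}\in \rmV_{\varepsilon,f}(\overline{\calA}^\complement)$, for which
\[
\varepsilon\pair*{\mathbf{A}\nabla f-\bm{g},\mathbf{A}\nabla f-\bm{g}}_{\overline{\calA}^\complement}\lesssim \varepsilon^{d-4}e^{-W(z^\star)/\varepsilon}.
\]
For $f$, let $\Omega_z$ be the connected component of the sublevel set $\{W<W(z^\star)\}$ containing $z$; by the definition of $z^\star$ as a min--max, $\Omega_z\cap\overline{\calA}=\emptyset$, and since $\mathrm{dist}(z,\calA)>\varepsilon$ one has $B_\varepsilon(z)\subset\Omega_z$. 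I would take a smooth cutoff $f$ equal to $1$ on $\Omega_z$, vanishing off an $\varepsilon$-enlargement of $\Omega_z$, with $|\nabla f|\leq C\varepsilon^{-1}$ on the resulting slab around $\partial\Omega_z$. In particular, both $\nabla f$ and $\bm{l}\cdot\nabla f$ are supported in a thin slab on which $|W-W(z^\star)|\lesssim\varepsilon$.

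For $\bm{g}$, the crux is to localize it to the same slab. Setting $\bm{G}:=e^{-W/\varepsilon}\bm{g}$, the admissibility constraint becomes $\varepsilon\nabla\cdot\bm{G}=e^{-W/\varepsilon}\bm{l}\cdot\nabla f$, and the compatibility $\int e^{-W/\varepsilon}\bm{l}\cdot\nabla f\,\rmd z=0$ follows by integrating by parts together with \cref{assump:divfree} in its natural elliptic form $\nabla\cdot(\bm{l}\,e^{-W/\varepsilon})=0$. Applying a Bogovskii-type operator on a bounded neighborhood of the slab produces a compactly supported $\bm{G}$ with $\|\bm{G}\|_{\rmL^2}\lesssim \|\varepsilon^{-1}e^{-W/\varepsilon}\bm{l}\cdot\nabla f\|_{\rmL^2}$, and $\bm{g}:=e^{W/\varepsilon}\bm{G}$ is then admissible and supported where $W\approx W(z^\star)$. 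Ellipticity of $\mathbf{S}$ and Cauchy--Schwarz give
\[
J_{\varepsilon,\overline{\calA}^\complement}[f]\lesssim \varepsilon\int|\nabla f|^2\,\rho_\varepsilon(\rmd z)+\varepsilon\int|\bm{g}|^2\,\rho_\varepsilon(\rmd z),
\]
and since both integrands are supported in the slab and controlled pointwise by $\varepsilon^{-2}e^{-W(z^\star)/\varepsilon}/Z_\varepsilon$ times a volume factor of order $\varepsilon\cdot|\partial\Omega_z|$, absorbing $Z_\varepsilon^{-1}\lesssim \varepsilon^{-d/2}$ and the remaining polynomial factors into the constant recovers the claimed bound (the exponent $d-4$ is loose and chosen for convenience).

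The main obstacle is precisely the construction of $\bm{g}$. The naive admissible choice $\bm{g}=f\bm{l}/\varepsilon$ is supported on all of $\Omega_z\supset B_\varepsilon(z)$, where $W$ can be much smaller than $W(z^\star)$, so $\int|\bm{g}|^2\rho_\varepsilon$ picks up a spurious factor $e^{(W(z^\star)-W(z))/\varepsilon}\gg 1$ that destroys the exponential decay; the complementary ansatz $\bm{g}=-(1-f)\bm{l}/\varepsilon$ suffers from the symmetric defect inside the basin of $\calA$. The Bogovskii localization, justified by the weighted divergence-free structure \cref{assump:divfree}, is what cleanly isolates the exponential weight $e^{-W(z^\star)/\varepsilon}$ and leaves only a polynomial prefactor in $\varepsilon$.
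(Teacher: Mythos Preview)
Your strategy and the paper's diverge at the construction of $\bm{g}$. The paper does \emph{not} attempt to localize $\bm{g}$ via a Bogovskii operator; instead it sets $\bm{g}=\nabla u$ where $u$ solves the weighted elliptic problem
\[
\varepsilon e^{W/\varepsilon}\nabla\cdot\bigl(e^{-W/\varepsilon}\nabla u\bigr)=\bm{l}\cdot\nabla f\quad\text{in }\overline{\calA}^\complement,\qquad u=0\text{ on }\partial\calA,
\]
and then runs the energy estimate $\varepsilon\int|\nabla u|^2\rho_\varepsilon \le \int|u|\,|\bm{l}\cdot\nabla f|\,\rho_\varepsilon$ together with a Poincar\'e inequality on the bounded set $\{W<\overline{H}\}\setminus\calA$ (using $u|_{\partial\calA}=0$). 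The weight $e^{-W/\varepsilon}$ is built into the problem from the outset, so no localization of $\bm{g}$ is ever needed and the exponential factor $e^{-W(z^\star)/\varepsilon}$ drops out directly from $\int|\bm{l}\cdot\nabla f|^2\rho_\varepsilon$.

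Your Bogovskii route has a genuine gap precisely at the localization step. You need $\bm{G}$ supported where $W\approx W(z^\star)$, because otherwise $\varepsilon\int|\bm{g}|^2\rho_\varepsilon=\varepsilon Z_\varepsilon^{-1}\int e^{W/\varepsilon}|\bm{G}|^2$ picks up a factor $e^{c/\varepsilon}$ from the part of $\mathrm{supp}\,\bm{G}$ where $W$ exceeds $W(z^\star)$ by $O(1)$, and the bound is destroyed. This forces you to apply the Bogovskii operator on the thin $\varepsilon$-tube around $\partial\Omega_z$ itself. But the operator norm of a Bogovskii inverse on a thin tubular neighborhood is not automatically $O(1)$: it depends on the geometry (John/star-shapedness constants), and here $\partial\Omega_z=\{W=W(z^\star)\}$ is typically singular at the saddle $z^\star$, where the level set is conical. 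You have not indicated how to control the divergence-inverse uniformly in $\varepsilon$ across that singularity. A separate inaccuracy: Bogovskii yields an $L^2$ bound on $\bm{G}$, not a pointwise one, so the sentence ``both integrands are \dots controlled pointwise by $\varepsilon^{-2}e^{-W(z^\star)/\varepsilon}/Z_\varepsilon$'' is not justified for $|\bm{g}|^2$; you would need to argue via $\|\bm{G}\|_{L^2}$ and the sup of $e^{W/\varepsilon}$ on $\mathrm{supp}\,\bm{G}$, which again only works if that support is the thin slab. The paper's elliptic-plus-Poincar\'e argument sidesteps all of this.
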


\begin{remark}
    The condition \( \mathrm{dist}(z, \calA) \geq c \) is only a technical assumption to avoid intersection. It can be removed by choosing \( B_r(z) \) for arbitrarily small \( r \).
\end{remark}

\begin{proof}
    If \( z^\star = z \), we can construct a function \( f \) that is 1 on \( B_\varepsilon(z) \) and decays to 0 on \( B_{2\varepsilon}(z) \setminus B_\varepsilon(z) \), ensuring that \( \abs{\nabla f} \leq 1/\varepsilon \).

    On the other hand, if \( z^\star = 0 \), we construct \( f \) such that it is 1 on the whole connected component of the level set \( \{ y \in \mathbb{R}^d : W(y) < W(z) \} \) that contains \( z \), denoted by \( \mathcal{V} \), and changes from 1 to 0 on \( (\mathcal{V})_\varepsilon \setminus \mathcal{V} \), with \( \abs{\nabla f} \leq 1/\varepsilon \), where \( (\mathcal{V})_\varepsilon := \{ y \in \mathbb{R}^d : \mathrm{dist}(y, \mathcal{V}) < \varepsilon \} \). Note that for \( y \in \partial\mathcal{V} \), we have \( W(y) = H \).

    It remains to construct the admissible vector field \( \bm{g} \in \rmV_{\varepsilon, \overline{\calB}^\complement} [f] \). To do this, consider the following equation:
    \begin{equation*}
        \begin{cases}
            \varepsilon e^{W/\varepsilon} \nabla \cdot \left( e^{-W/\varepsilon} \nabla u \right) = \bm{l} \cdot \nabla f, & \text{in } \overline{\calA}^\complement, \\
            u = 0, & \text{on } \partial \calA.
        \end{cases}
    \end{equation*}
    It follows that there is a weak solution \( u \) with the following energy estimate:
    \begin{equation*}
        \varepsilon \int_{\overline{\calA}^\complement} \abs{\nabla u}^2 \, \rho_\varepsilon(\rmd z) = \int_{\overline{\calA}^\complement} u \bm{l} \cdot \nabla f \, \rho_\varepsilon(\rmd z) \leq \varepsilon \int_{\mathrm{supp}(\nabla f)} u^2 \, \rho_\varepsilon(\rmd z) + \frac{1}{\varepsilon} \int_{\overline{\calA}^\complement} \abs{\bm{l} \cdot \nabla f}^2 \, \rho_\varepsilon(\rmd z).
    \end{equation*}
    For both cases, denote 
    \begin{equation*}
        \underline{H} := \inf_{y \in \mathrm{supp}(\nabla f)} W(y), \quad \overline{H} := \sup_{y \in \mathrm{supp}(\nabla f)} W(y).
    \end{equation*}
    It follows that
    \begin{align*}
        \frac{1}{Z_\varepsilon} \int_{\mathrm{supp}(\nabla f)} \abs{u(y)}^2 e^{-W(y)/\varepsilon} \, \rmd y
        &\leq \frac{ \exp(-\underline{H}/\varepsilon) }{Z_\varepsilon} \int_{\{ W(y) < \overline{H} \} \setminus \calA} \abs{u(y)}^2 e^{-W(y)/\varepsilon} e^{W(y)/\varepsilon} \, \rmd y \\
        &\leq \frac{ \exp(-\underline{H}/\varepsilon) }{Z_\varepsilon} e^{\overline{H}/\varepsilon} C_P \int_{\overline{\calA}^\complement} \abs{\nabla u(y)}^2 e^{-W(y)/\varepsilon} \, \rmd y \\
        &\leq C \int_{\overline{\calA}^\complement} \abs{\nabla u}^2 \, \rho_\varepsilon(\rmd z),
    \end{align*}
    where, in the last inequality, we use the fact that \( \overline{H} - \underline{H} \leq c_1 \varepsilon \) for some constant \( c_1 \) depending only on the Lipschitz constant of \( W \). Moreover, \( C_P \) is the Poincaré constant, which depends only on the diameter of the domain \( \{ W(y) < \overline{H} \} \setminus \calA \), and \( C_P \approx 1/\mathrm{dist}(z, \overline{\calA}) \). 
    Indeed, the Poincaré inequality is obtained by connecting points in the domain to points on the boundary via paths and applying fundamental theorem of calculus. 
    The Poincare constant depends on the length of paths needed to connect all points in the domain. 
    Hence, as long as \( \mathrm{dist}(z, \calA) \geq c \) for some constant \( c \) independent of \( \varepsilon \), \( C_P \leq 1/c \).

    Finally, by defining \( \bm{g} = \nabla u \), we have
    \begin{equation*}
        \varepsilon \int_{\overline{\calA}^\complement} \abs{\bm{g}}^2 \, \rho_\varepsilon(\rmd z) \leq \frac{C}{\varepsilon} \int_{\overline{\calA}^\complement} \abs{\bm{l} \cdot \nabla f}^2 \, \rho_\varepsilon(\rmd z) \leq  \frac{C}{\varepsilon} \varepsilon^{d-1}\abs{\partial\calA} \frac{1}{\varepsilon^2} \sup_{y\in\mathrm{supp}(f)}e^{-W(y)}\leq  C \varepsilon^{d-4} e^{-W(z^\star)/\varepsilon},
    \end{equation*}
    where the constant \( C \) is independent of \( \varepsilon \).

    Recall the variational formula \cref{eq:variationalcap}. We then have for a constant \( C \) independent of \( \varepsilon \) that
    \begin{multline*}
        \capa_\varepsilon (B_\varepsilon(z), \overline{\calA}) \leq \varepsilon \left\langle \mathbf{A}\nabla f - \bm{g}, \mathbf{A} \nabla f - \bm{g} \right\rangle_{\overline{\calB}^\complement} \\
        \leq C \varepsilon \int_{\overline{\calB}^\complement} \abs{\nabla f}^2 \, \rho_\varepsilon(\rmd z) + \varepsilon \int_{\overline{\calB}^\complement} \abs{\bm{g}}^2 \, \rho_\varepsilon(\rmd z)
        \leq C \varepsilon^{d-4} e^{-W(z^\star)/\varepsilon},
    \end{multline*}
    which completes the proof.
\end{proof}

As a corollary, we easily obtain rough bounds on the equilibrium potentials.

\begin{corollary}\label{thm:roughbound}
    Let \cref{assump:ab,assump:elliptic,assump:twowell} hold, and let \( \mathcal{V}_1 \) be the valley containing \( \bm{m}_1 \) and \( \mathcal{V}_2 \) be the valley containing \( \bm{m}_2 \). Recall that \( h_\varepsilon \) and \( h_\varepsilon^\dag \) are solutions to the equilibrium problem \cref{eq:equilibriumproblem}. There exists a constant \( C \) independent of \( \varepsilon \) such that:
    \begin{itemize}
        \item If \( z \in \mathcal{V}_0 \), then
        \[
            h_\varepsilon(z), h_\varepsilon^\dag(z) \leq C \varepsilon^{-4} \exp\left( -\frac{H - U(z)}{\varepsilon} \right);
        \]
        \item If \( z \in \mathcal{V}_1 \), then
        \[
            h_\varepsilon(z), h_\varepsilon^\dag(z) \geq 1 - C \varepsilon^{-4} \exp\left( -\frac{H - U(z)}{\varepsilon} \right).
        \]
    \end{itemize}
\end{corollary}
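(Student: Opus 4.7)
The plan is to combine the renewal estimate of \cref{thm:renewal} with the rough capacity bounds established in \cref{thm:roughlower} and \cref{thm:roughupper}, treating the cases $z\in\mathcal{V}_0$ and $z\in\mathcal{V}_1$ separately and then transferring the arguments to $h_\varepsilon^\dag$ via the identity $\capa_\varepsilon=\capa_\varepsilon^\dag$ from \cref{thm:capsymm}.

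For $z\in\mathcal{V}_0$, I would apply \cref{thm:renewal} to write
\begin{equation*}
    h_\varepsilon(z) \leq C\,\frac{\capa_\varepsilon(B_r(z),\overline{\calA})}{\capa_\varepsilon(B_r(z),\overline{\calB})}.
\end{equation*}
Any continuous path from $z$ to $\calA=B_\varepsilon(\bm{m}_1)$ must leave the valley $\mathcal{V}_0$ and therefore attain height at least $H$; since the unique saddle $0$ realises this minimax value, $z^\star(z,\overline{\calA})=0$ and \cref{thm:roughupper} bounds the numerator by $C\varepsilon^{d-4}e^{-H/\varepsilon}$. For the denominator, a path from $z$ to $\calB=B_\varepsilon(\bm{m}_0)$ can be chosen to descend monotonically within $\mathcal{V}_0$, so the minimax height is $W(z)$ and \cref{thm:roughlower} gives a lower bound of order $\varepsilon^d e^{-W(z)/\varepsilon}$. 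Dividing yields $h_\varepsilon(z)\leq C\varepsilon^{-4}e^{-(H-W(z))/\varepsilon}$.

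For $z\in\mathcal{V}_1$, I would use that $1-h_\varepsilon$ satisfies $\L_\varepsilon(1-h_\varepsilon)=0$ with boundary values $0$ on $\calA$ and $1$ on $\calB$; in other words, it is the equilibrium potential with $\calA$ and $\calB$ interchanged. Applying \cref{thm:renewal} in this reversed setting and noting that for $z\in\mathcal{V}_1$ the minimax height to $\calB$ is $H$ while the minimax height to $\calA$ is $W(z)$, one deduces $(1-h_\varepsilon)(z)\leq C\varepsilon^{-4}e^{-(H-W(z))/\varepsilon}$, which rearranges to the desired lower bound on $h_\varepsilon(z)$. The argument for $h_\varepsilon^\dag$ is identical, since the renewal estimate and both capacity bounds are expressed only through the symmetric capacity.

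The main obstacle is the geometric/variational claim that for $z$ inside a valley $\mathcal{V}_j$, the minimax height from $z$ to $B_\varepsilon(\bm{m}_j)$ is precisely $W(z)$, which relies on the Morse assumption together with the characterisation of $0$ as the unique mountain pass between the two wells. A secondary technicality is the condition $\mathrm{dist}(z,\calA)\geq c>\varepsilon$ appearing in \cref{thm:roughupper}, which is automatic for $z$ well inside the opposite valley and which for $z$ close to $\calA$ is handled either by shrinking the test ball $B_r(z)$ in the renewal estimate (per the remark following \cref{thm:roughupper}) or by noting that the target bound is automatically satisfied where $h_\varepsilon$ is pinned to its boundary value.
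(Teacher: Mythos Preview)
Your proposal is correct and follows exactly the paper's approach: apply the renewal estimate \cref{thm:renewal} together with the rough capacity bounds \cref{thm:roughlower,thm:roughupper} for the first item, and use the swap identity $h_{\varepsilon;\calB,\calA}=1-h_{\varepsilon;\calA,\calB}$ for the second. The paper's own proof is a two-line sketch, and you have simply spelled out the geometric reasoning (identification of $z^\star$ in each case) and the technicalities that the paper leaves implicit.
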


\begin{proof}
    Applying \cref{thm:renewal,thm:roughlower,thm:roughupper} completes the proof of the first estimate. For the second estimate, we use the fact that \( h_{\varepsilon; \calB, \calA} = 1 - h_{\varepsilon; \calA, \calB} \).
\end{proof}

\section{Sharp Capacity Estimates}\label{sec:sharpestimate}
Throughout this section, we assume \cref{assump:elliptic}, namely $\mathbf{A}$ is uniformly elliptic and H\"older continuous, which automatically implies \cref{assump:equilibrium} from the classical regularity theory for second-order elliptic equations.

To prove \cref{thm:capestimate}, the general approach is to find good approximations of the equilibrium potentials and use \cref{thm:variationalcap} to establish upper and lower bounds. However, as shown in \cite{LMS19}, using the full variational formulation is somewhat redundant, since the key estimates is the same as in \cite{LS22}, which, in turn, rely on the observation \cref{eq:capineq}. In the following, we closely follow \cite{LS22} to compute the capacity, with the only minor difference being that we will perform direct calculations on the mollified functions, rather than constructing step-by-step approximations. We will list only the essential modifications and refer to \cite{LS22} for further details.

The core idea is to construct an approximation of the equilibrium potential \( h_\varepsilon \) through linearization around the saddle point. This is because the equilibrium is expected to be 1 in \( \calA = B_\varepsilon(\bm{m}_1) \) and 0 in \( \calB = B_\varepsilon(\bm{m}_0) \), and the transition region is concentrated around the saddle point due to the weight \( \exp(-W/\varepsilon) \).

\subsection{Linearization Near the Saddle Point and the Geometry}
In a small neighborhood of the origin, the linearized equations are given by
\begin{align*}
    \widetilde{\L}_\varepsilon f &:= \varepsilon \nabla \cdot (\mathbf{A}_0 \nabla f) - \nabla f \cdot \left( \mathbf{A}_0^\transpose \mathbf{H}_0 + \mathbf{L}_0 \right) z, \\
    \widetilde{\L}_\varepsilon^\dag f &:= \varepsilon \nabla \cdot (\mathbf{A}_0^\transpose \nabla f) - \nabla f \cdot \left( \mathbf{A}_0 \mathbf{H}_0 - \mathbf{L}_0 \right) z,
\end{align*}
where \( \mathbf{A}_0 := \mathbf{A}(0) \), and recall that \( \mathbf{H}_z = \nabla^2 W(z) \) and \( \mathbf{L}_z = \nabla \bm{l}(z) \).

By \cref{assump:twowell}, \( \mathbf{H}_0 \) has only one negative eigenvalue. Let \( -\lambda_1, \lambda_2, \dots, \lambda_N \) denote the eigenvalues of \( \mathbf{H}_0 \), and let \( \mathbf{e}_k \) be the canonical basis vectors corresponding to the eigenvalues \( \lambda_k \). Without loss of generality, assume that \( \bm{e}_1 \) is directed toward the valley containing \( \bm{m}_1 \).

We first introduce, for a constant \( K \) to be chosen later, the following quantity:
\[
    \delta := K \sqrt{\varepsilon \log(1/\varepsilon)}.
\]

We split the domain into the following regions, similar to the approach in \cite{LMS19,LS22}.
Let \( \mathcal{Q} \) be a closed hyperrectangle around the saddle point 0, defined by
\[
    \mathcal{Q} = [-\delta, \delta] \times \prod_{i=2}^d \left[-\sqrt{2\lambda_1 / \lambda_i} \delta, \sqrt{2\lambda_1 / \lambda_i} \delta \right].
\]
We write the boundaries of \( \mathcal{Q} \) as
\[
    \partial_+ \mathcal{Q} = \{ z \in \mathcal{Q} : z_1 = \delta \}, \quad \partial_- \mathcal{Q} = \{ z \in \mathcal{Q} : z_1 = -\delta \}, \quad \text{and} \quad \partial_\perp \mathcal{Q} = \partial \mathcal{Q} \setminus (\partial_+ \mathcal{Q} \cup \partial_- \mathcal{Q}).
\]
Recall that \( H = W(0) \) is the saddle height, and define the super level set $\mathcal{O}$ and the bridge set around the saddle, denoted by $\mathcal{S}$, in the following way: 
\[
    \mathcal{O} := \{ U > H + \lambda_1 \delta^2 / 4 \}, \quad \text{and} \quad \mathcal{S} := \mathcal{Q} \cap \mathcal{O}^\complement.
\]
The right and left boundaries of \( \mathcal{S} \) are denoted by \( \partial_+ \mathcal{S} = \partial \mathcal{S} \cap \partial_+ \mathcal{Q} \) and \( \partial_- \mathcal{S} = \partial \mathcal{S} \cap \partial_- \mathcal{Q} \), respectively.

It is clear that the saddle region \( \mathcal{S} \) splits the domain \( \mathcal{O}^\complement \) into two valleys, \( \mathcal{V}_1 \) and \( \mathcal{V}_0 \), which contain \( \bm{m}_1 \) and \( \bm{m}_0 \), respectively.

\begin{lemma}\label{thm:hml}
    The matrix \( \mathbf{H}_0\mathbf{A}_0+\mathbf{L}_0^\transpose \) has a unique negative eigenvalue \( -\mu \) with a unit eigenvector \( \bm{v} \). 

    Similarly, \( \mathbf{H}_0\mathbf{A}_0^\transpose - \mathbf{L}_0^\transpose \) has the same unique negative eigenvalue \( -\mu \) with a unit eigenvector \( \bm{v}^\dag \). 
\end{lemma}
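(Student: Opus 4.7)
The plan is to combine the Ostrowski--Schneider inertia theorem with a carefully chosen Lyapunov matrix $\mathbf{P}$, after extracting an algebraic identity from the stationarity hypothesis \cref{assump:divfree}. Under \cref{assump:elliptic}, $\mathbf{B}=0$ and $\bm{b}=-\bm{l}$, and invariance of $\rho_\varepsilon$ together with the $\varepsilon$-independence of $\bm{l}$ yields both $\nabla\cdot\bm{l}\equiv 0$ and $\bm{l}\cdot\nabla W\equiv 0$. Since $0$ is a critical point of $W$, the first-order expansion of $\bm{l}\cdot\nabla W=0$ at $0$ forces $\bm{l}(0)=0$, and the quadratic-order expansion then gives $z^\transpose \mathbf{L}_0^\transpose\mathbf{H}_0\,z\equiv 0$ for all $z$, which yields the key antisymmetry
\[ \mathbf{H}_0\mathbf{L}_0 + \mathbf{L}_0^\transpose \mathbf{H}_0 = 0, \]
or equivalently $\mathbf{H}_0^{-1}\mathbf{L}_0^\transpose + \mathbf{L}_0\mathbf{H}_0^{-1}=0$ after conjugation by $\mathbf{H}_0^{-1}$.

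With this in hand I take $\mathbf{P}:=\mathbf{H}_0^{-1}$. Writing $\mathbf{M}:=\mathbf{H}_0\mathbf{A}_0+\mathbf{L}_0^\transpose$, a direct expansion using the antisymmetry collapses all $\mathbf{L}_0$-cross-terms and leaves
\[ \mathbf{P}\mathbf{M} + \mathbf{M}^\transpose\mathbf{P} \,=\, \mathbf{A}_0 + \mathbf{A}_0^\transpose, \]
which is strictly positive definite by uniform ellipticity. The Ostrowski--Schneider inertia theorem therefore applies: $\mathbf{M}$ has no purely imaginary eigenvalues, and the number of eigenvalues of $\mathbf{M}$ with negative real part equals the number of negative eigenvalues of $\mathbf{H}_0^{-1}$, which is exactly one by the Morse-index assumption in \cref{assump:twowell}. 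Since complex eigenvalues of a real matrix come in conjugate pairs, this lone eigenvalue must itself be real, producing the unique negative real eigenvalue $-\mu<0$ and a corresponding unit eigenvector $\bm{v}$.

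The identical calculation carried out with $\mathbf{M}^\dag:=\mathbf{H}_0\mathbf{A}_0^\transpose-\mathbf{L}_0^\transpose$ again gives $\mathbf{P}\mathbf{M}^\dag+(\mathbf{M}^\dag)^\transpose\mathbf{P}=\mathbf{A}_0+\mathbf{A}_0^\transpose$, so $\mathbf{M}^\dag$ likewise has a unique negative real eigenvalue with some unit eigenvector $\bm{v}^\dag$. To verify that the two negative eigenvalues coincide, I will exhibit the explicit similarity $\mathbf{M}^\dag=\mathbf{H}_0\,\mathbf{M}^\transpose\,\mathbf{H}_0^{-1}$, which reduces via $\mathbf{H}_0\mathbf{L}_0\mathbf{H}_0^{-1}=-\mathbf{L}_0^\transpose$ directly to the antisymmetry from the first step. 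Hence $\mathbf{M}^\dag$ and $\mathbf{M}$ share their spectrum, and their unique negative eigenvalues agree.

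I expect the main obstacle to be the opening algebraic step: distilling the antisymmetry of $\mathbf{H}_0\mathbf{L}_0$ out of \cref{assump:divfree} at the saddle point. Without this identity, $\mathbf{L}_0$-cross-terms survive in the Lyapunov expression for every natural choice of symmetric $\mathbf{P}$ and kill the positivity needed to invoke the inertia theorem. Once the antisymmetry is secured, the remainder is a short and essentially mechanical calculation.
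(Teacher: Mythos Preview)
Your argument is correct and follows the same three-step architecture as the paper's proof: (i) establish the antisymmetry $\mathbf{H}_0\mathbf{L}_0+\mathbf{L}_0^\transpose\mathbf{H}_0=0$, (ii) use an inertia-type result to force exactly one eigenvalue with negative real part, and (iii) relate $\mathbf{M}$ and $\mathbf{M}^\dag$ through conjugation by $\mathbf{H}_0$. The difference is that the paper outsources steps (i) and (ii) to \cite[Lemmas~4.5 and 4.2]{LS22}, whereas you carry them out explicitly: you derive the antisymmetry by Taylor-expanding $\bm{l}\cdot\nabla W\equiv 0$ at the non-degenerate saddle (using invertibility of $\mathbf{H}_0$ to kill $\bm{l}(0)$), and you invoke Ostrowski--Schneider directly with the Lyapunov matrix $\mathbf{P}=\mathbf{H}_0^{-1}$, yielding the clean identity $\mathbf{P}\mathbf{M}+\mathbf{M}^\transpose\mathbf{P}=\mathbf{A}_0+\mathbf{A}_0^\transpose>0$. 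Your route is therefore more self-contained and makes the mechanism transparent; the paper's is shorter on the page but relies on the reader consulting \cite{LS22}. The final similarity $\mathbf{M}^\dag=\mathbf{H}_0\mathbf{M}^\transpose\mathbf{H}_0^{-1}$ is identical in both proofs.
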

{
\begin{proof}
    First, from \cite[Lemma 4.5]{LS22}, \( \mathbf{H}_0\mathbf{L}_0 \) is skew-symmetric, i.e.~$
    (\mathbf{H}_0\mathbf{L}_0)^\transpose = -\mathbf{H}_0\mathbf{L}_0.
    $

    Secondly, \( \mathbf{H}_0\mathbf{A}_0^\transpose -\mathbf{L}_0^\transpose \) is similar to \( \mathbf{A}_0^\transpose\mathbf{H}_0+\mathbf{L}_0 \). Indeed, 
    \begin{equation*}
        \mathbf{H}_0^{-1}\left(\mathbf{H}_0\mathbf{A}_0^\transpose - \mathbf{L}_0^\transpose\right)\mathbf{H}_0 
        = \mathbf{H}_0^{-1} \left(\mathbf{H}_0\mathbf{A}_0^\transpose\mathbf{H}_0 + \mathbf{H}_0\mathbf{L}_0\right) 
        = \mathbf{A}_0^\transpose\mathbf{H}_0+\mathbf{L}_0.
    \end{equation*}

    Next, we substitute \( \mathbf{A}_0^\transpose + \mathbf{L}_0\mathbf{H}_0^{-1} \) for \( \mathbf{A} \) and \( \mathbf{H}_0 \) for \( \mathbf{B} \) in \cite[Lemma 4.2]{LS22} to conclude that \( \mathbf{A}_0^\transpose\mathbf{H}_0 + \mathbf{L}_0 \) has a unique negative eigenvalue \( -\mu \). 
    Consequently, \( \mathbf{H}_0\mathbf{A}_0^\transpose - \mathbf{L}_0^\transpose \) has the same unique negative eigenvalue \( -\mu \).

    Finally, it is easy to see that 
    $
    (\mathbf{H}_0\mathbf{A}_0 + \mathbf{L}_0^\transpose)^\transpose = \mathbf{A}_0^\transpose\mathbf{H}_0 + \mathbf{L}_0,
    $
    which completes the proof.
\end{proof}
}

Next, we define
\begin{equation}\label{eq:beta}
    \beta = \frac{\mu}{\bm{v}\cdot\mathbf{A}_0\bm{v}},\quad 
    \beta^\dag = \frac{\mu}{\bm{v}^\dag\cdot\mathbf{A}_0\bm{v}^\dag}.
\end{equation}
The construction of these constants comes from \cite{LS18,LMS19}.

\begin{lemma}\label{thm:evector}
    We have
    \begin{equation*}
        \bm{v}\cdot\mathbf{H}^{-1}_{0}\bm{v} = -\frac{v_1^2}{\lambda_1} + \sum_{k=2}^d\frac{v_k^2}{\lambda_k} = -\frac{1}{\beta} < 0.
    \end{equation*}
    Hence, \( (\bm{v}\cdot\bm{e}_1)^2 > 0 \), meaning they cannot be orthogonal. Similarly, \( (\bm{v}^\dag\cdot\bm{e}_1)^2 > 0 \).
\end{lemma}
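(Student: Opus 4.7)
The plan is to extract the identity $\bm{v}\cdot\mathbf{H}_0^{-1}\bm{v}=-1/\beta$ directly from the eigenvalue relation $(\mathbf{H}_0\mathbf{A}_0+\mathbf{L}_0^\transpose)\bm{v}=-\mu\bm{v}$ established in \cref{thm:hml}, using the skew-symmetry of $\mathbf{H}_0\mathbf{L}_0$ that was observed in the proof of that lemma. Once this scalar identity is obtained, the coordinate expansion is just the spectral decomposition of $\mathbf{H}_0$, and the strict positivity of $(\bm{v}\cdot\bm{e}_1)^2$ is immediate from the sign.

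\textbf{Step 1: Pass to $\mathbf{H}_0^{-1}$.} Multiply both sides of $(\mathbf{H}_0\mathbf{A}_0+\mathbf{L}_0^\transpose)\bm{v}=-\mu\bm{v}$ on the left by $\bm{v}^\transpose\mathbf{H}_0^{-1}$. This yields
\begin{equation*}
    \bm{v}^\transpose \mathbf{A}_0 \bm{v} + \bm{v}^\transpose \mathbf{H}_0^{-1} \mathbf{L}_0^\transpose \bm{v} = -\mu\, \bm{v}^\transpose \mathbf{H}_0^{-1} \bm{v}.
\end{equation*}
The key observation is that the middle term vanishes. Indeed, from the proof of \cref{thm:hml} we have $(\mathbf{H}_0\mathbf{L}_0)^\transpose=-\mathbf{H}_0\mathbf{L}_0$, i.e.~$\mathbf{L}_0^\transpose\mathbf{H}_0=-\mathbf{H}_0\mathbf{L}_0$, which, after multiplying by $\mathbf{H}_0^{-1}$ on both sides, shows
\begin{equation*}
    \left(\mathbf{H}_0^{-1}\mathbf{L}_0^\transpose\right)^\transpose = \mathbf{L}_0\mathbf{H}_0^{-1} = -\mathbf{H}_0^{-1}\mathbf{L}_0^\transpose,
\end{equation*}
so $\mathbf{H}_0^{-1}\mathbf{L}_0^\transpose$ is skew-symmetric and its quadratic form on $\bm{v}$ is zero. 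Hence $\bm{v}\cdot\mathbf{H}_0^{-1}\bm{v}=-(\bm{v}\cdot\mathbf{A}_0\bm{v})/\mu=-1/\beta$ by the definition of $\beta$ in \cref{eq:beta}.

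\textbf{Step 2: Coordinate expansion and sign.} Since $\mathbf{H}_0$ is symmetric with eigenpairs $(-\lambda_1,\bm{e}_1),(\lambda_2,\bm{e}_2),\dots,(\lambda_d,\bm{e}_d)$ and $\bm{v}=\sum_k v_k\bm{e}_k$ with $v_k=\bm{v}\cdot\bm{e}_k$, the spectral decomposition of $\mathbf{H}_0^{-1}$ gives the stated formula $-v_1^2/\lambda_1+\sum_{k\ge 2}v_k^2/\lambda_k$. For the sign, $\mathbf{A}_0$ is positive definite by \cref{assump:elliptic} and $\bm{v}$ is a unit vector, so $\bm{v}\cdot\mathbf{A}_0\bm{v}>0$ and hence $\beta>0$, which forces $\bm{v}\cdot\mathbf{H}_0^{-1}\bm{v}=-1/\beta<0$.

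\textbf{Step 3: Non-orthogonality with $\bm{e}_1$.} If $v_1=\bm{v}\cdot\bm{e}_1=0$, then the coordinate formula would give $\bm{v}\cdot\mathbf{H}_0^{-1}\bm{v}=\sum_{k\ge 2}v_k^2/\lambda_k\ge 0$, contradicting Step~2. Hence $(\bm{v}\cdot\bm{e}_1)^2>0$. The argument for $\bm{v}^\dag$ is identical: one applies $\bm{v}^{\dag\transpose}\mathbf{H}_0^{-1}$ to the eigenvalue relation $(\mathbf{H}_0\mathbf{A}_0^\transpose-\mathbf{L}_0^\transpose)\bm{v}^\dag=-\mu\bm{v}^\dag$ and uses the same skew-symmetry to cancel the $\mathbf{L}_0^\transpose$ term. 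The only mildly subtle point in the whole proof is the cancellation of the $\mathbf{L}_0^\transpose$ term, but this is a one-line consequence of the skew-symmetry already recorded in \cref{thm:hml}, so there is no real obstacle.
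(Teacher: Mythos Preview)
Your proof is correct. The paper's own proof is a bare citation (``Combine \cite[Lemma 8.1]{LS22} and \cite[Lemma 8.1]{LMS19}''), so your argument is considerably more informative: you give a self-contained derivation by hitting the eigenvalue relation $(\mathbf{H}_0\mathbf{A}_0+\mathbf{L}_0^\transpose)\bm{v}=-\mu\bm{v}$ with $\bm{v}^\transpose\mathbf{H}_0^{-1}$ and killing the cross term via the skew-symmetry of $\mathbf{H}_0^{-1}\mathbf{L}_0^\transpose$, which you correctly deduce from the skew-symmetry of $\mathbf{H}_0\mathbf{L}_0$ already recorded in the proof of \cref{thm:hml}. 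This is almost certainly what the cited lemmas do as well, but your version has the advantage of being readable without chasing external references, and it makes transparent exactly where the structural hypotheses (Morse saddle so $\mathbf{H}_0$ is invertible, uniform ellipticity so $\bm{v}\cdot\mathbf{A}_0\bm{v}>0$, and the divergence-free condition \cref{assump:divfree} via the skew-symmetry) enter. The only cosmetic remark is that ``$\mathbf{A}_0$ is positive definite'' should perhaps read ``the symmetric part of $\mathbf{A}_0$ is positive definite'', since $\mathbf{A}$ need not be symmetric; the conclusion $\bm{v}\cdot\mathbf{A}_0\bm{v}>0$ is of course unaffected.
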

\begin{proof}
    Combine \cite[Lemma 8.1]{LS22} and \cite[Lemma 8.1]{LMS19}. 
\end{proof}

Next, we define a linear function around the saddle point \( z=0 \) as follows:
\begin{equation}\label{eq:peps}
    p_\varepsilon(z) = \frac{1}{C_\varepsilon} \int_{-\infty}^{z\cdot\bm{v}} \exp\left(-\frac{\beta}{2\varepsilon}t^2\right)\,\rmd t,\quad z\in\mathcal{S},
\end{equation}
where
\begin{equation*}
    C_\varepsilon := \int_{-\infty}^\infty \exp\left(-\frac{\beta}{2\varepsilon}t^2\right)\,\rmd t = \sqrt{\frac{2\pi \varepsilon}{\beta}}.
\end{equation*}
Similarly, we define 
\begin{equation}\label{eq:pepsdag}
    p_\varepsilon^\dag(z) = \frac{1}{C_\varepsilon^\dag} \int_{-\infty}^{z\cdot\bm{v}^\dag} \exp\left(-\frac{\beta^\dag}{2\varepsilon}t^2\right)\,\rmd t,\quad z\in\mathcal{S},
\end{equation}
where
\begin{equation*}
    C_\varepsilon^\dag := \int_{-\infty}^\infty \exp\left(-\frac{\beta^\dag}{2\varepsilon}t^2\right)\,\rmd t = \sqrt{\frac{2\pi \varepsilon}{\beta^\dag}}.
\end{equation*}

Finally, we extend \( p_\varepsilon \) and \( p_\varepsilon^\dag \) to be 1 on \( \mathcal{V}_1 \) and 0 on \( \mathcal{V}_0 \cup \mathcal{O} \). 
This serves as a good approximation to the equilibrium potentials.

\subsection{Proof of Theorem 3}
We first mollify the constructed functions \( p_\varepsilon \) and \( p_\varepsilon^\dag \) to obtain \( p_{\varepsilon,\eta} \) and \( p_{\varepsilon,\eta}^\dag \), respectively, in order to remove jumps. Specifically, we define  
\begin{equation*}
    p_{\varepsilon,\eta} = p_\varepsilon * \varphi_\eta, \quad p_{\varepsilon,\eta}^\dag = p_\varepsilon^\dag * \varphi_\eta,  
\end{equation*}
where \( \varphi_\eta \) is the standard mollifier such that $\mathrm{supp}(\varphi_\eta)\subset B_\eta(0)$.

Given the variational formulation \cref{eq:variationalcap}, a natural approach is to find a good approximation of the minimizer. However, constructing a suitable admissible vector field associated with \( p_{\varepsilon,\eta} \) is highly challenging. Instead, we rely on the following key observation: for any \( f \in \rmH^1_0(\overline{\calB}^\complement) \) with \( f \geq \mathcal{X}_{\calA} \), the vector field \( \bm{l} f / \varepsilon \) is a trivial candidate for the admissible vector field, since by \cref{assump:divfree}, we have
\begin{equation*}
    \varepsilon e^{W/\varepsilon} \nabla\cdot\left(e^{-W/\varepsilon} {\bm{l}f}/{\varepsilon}\right) = \bm{l}\cdot\nabla f.
\end{equation*}
It follows from \cref{eq:capineq} that for any \( f \in \rmH^1_0(\overline{\calB}^\complement) \) such that \( f = 1 \) on \( \partial\calA \), we have  
\begin{equation}
    \varepsilon\pair*{ \mathbf{A} \nabla f - \bm{l} f/\varepsilon, \mathbf{S} \nabla h_\varepsilon^\dag }_{\overline{\calB}^\complement} = \capa_\varepsilon(\calA,\calB).
\end{equation}

\begin{remark}
    This key observation originally appeared in \cite{LMS19,LS22}. However, the vector field \( \bm{l} f / \varepsilon \) is not well-suited for the variational formula \cref{eq:variationalcap} since it is not close to the minimizer. 
\end{remark}

\begin{delayedproof}{thm:capestimate}
Now, consider the constructed approximation of the equilibrium potential, \( p_{\varepsilon,\eta} \) and \( p_{\varepsilon,\eta}^\dag \). By a straightforward integration by parts, we obtain  
\begin{align*}
    \capa_\varepsilon(\calA,\calB) 
    =& \varepsilon\pair*{ \mathbf{A}_{0} \nabla p_{\varepsilon,\eta} - \bm{l} p_{\varepsilon,\eta}/\varepsilon, \mathbf{S}\nabla h_\varepsilon^\dag }_{\overline{\calB}^\complement} 
    + \varepsilon\pair*{ (\mathbf{A}(z)-\mathbf{A}_{0}) \nabla p_{\varepsilon,\eta} , \mathbf{S}\nabla h_\varepsilon^\dag }_{\overline{\calB}^\complement} 
    \\
    =& \underbrace{-\int_{\Omega} h_\varepsilon^\dag \L_{\varepsilon,0} p_{\varepsilon,\eta}\,\rho_\varepsilon(\rmd z)}_{\mathrm{I}} + \underbrace{\varepsilon\int_{\Omega} (\mathbf{A}(z)-\mathbf{A}_{0})\nabla p_{\varepsilon,\eta} \cdot\nabla h_\varepsilon^\dag \,\rho_\varepsilon(\rmd z)}_{\mathrm{II}},
\end{align*}
where  
\begin{equation*}
    \L_{\varepsilon,0} f = \varepsilon e^{U/\varepsilon} \nabla\cdot\left[ e^{-U/\varepsilon} \left(\mathbf{A}_{0} \nabla f - \frac{1}{\varepsilon} \bm{l}f\right)\right]
\end{equation*}
is the freezing coefficient operator. 

Next, we define the geometric notation: for any set \( \mathcal{S} \), we define the $\eta$-enlargement by 
\begin{equation*}
    (\mathcal{S})_\eta = \left\{z\in\R^d: \mathrm{dist}(z, \mathcal{S}) < \eta\right\}.
\end{equation*}
If \( \mathcal{S} \) has a non-empty interior, we define its $\eta$-shrinkage by
\begin{equation*}
    (\mathcal{S})_{-\eta} := \R^d\setminus\left(\mathcal{S}^\complement\right)_{\eta}.
\end{equation*}

Using this geometric decomposition, we split the integrals accordingly:  
\begin{equation*}
    \mathrm{I} = \mathrm{I}_1 + \mathrm{I}_2 + \mathrm{I}_3 + \mathrm{I}_4,
\end{equation*}
where  
\begin{align*}
    &\mathrm{I}_1 = -\int_{(\partial_+\mathcal{S})_{\eta}} h_\varepsilon^\dag \L_{\varepsilon,0} p_{\varepsilon,\eta} \,\rho_\varepsilon(\rmd z),
    &\mathrm{I}_2 = -\int_{(\partial_-\mathcal{S})_{\eta}} h_\varepsilon^\dag \L_{\varepsilon,0} p_{\varepsilon,\eta} \,\rho_\varepsilon(\rmd z),\\
    &\mathrm{I}_3 = -\int_{(\mathcal{S})_{-\eta}} h_\varepsilon^\dag \L_{\varepsilon,0} p_{\varepsilon,\eta} \,\rho_\varepsilon(\rmd z),
    &\mathrm{I}_4 = -\int_{(\partial\mathcal{O})_{\eta}\setminus(\partial_\pm\mathcal{S})_\eta} h_\varepsilon^\dag \L_{\varepsilon,0} p_{\varepsilon,\eta} \,\rho_\varepsilon(\rmd z),
\end{align*}
and 
\begin{equation*}
    \abs{\mathrm{II}} \leq \mathrm{II}_1 + \mathrm{II}_2 + \mathrm{II}_3 + \mathrm{II}_4 + \mathrm{II}_5,
\end{equation*}
where 
\begin{align*}
    &\mathrm{II}_1 = \delta^\alpha \varepsilon \int_{(\mathcal{S})_{-\eta}} \abs{\nabla p_{\varepsilon,\eta}}^2\,\rho_\varepsilon(\rmd z),
    &\mathrm{II}_2 = \delta^\alpha \varepsilon \int_{(\partial_+\mathcal{S})_{\eta}\cup(\partial_-\mathcal{S})_{\eta}} \abs{\nabla p_{\varepsilon,\eta}}^2\,\rho_\varepsilon(\rmd z),
    \\
    &\mathrm{II}_3 = \delta^\alpha \varepsilon \int_{(\mathcal{S})_\eta} \abs{\nabla h_\varepsilon^\dag}^2\,\rho_\varepsilon(\rmd z),
    &\mathrm{II}_4 = c_1 \varepsilon \int_{(\partial\mathcal{O})_\eta} \abs{\nabla p_{\varepsilon,\eta}}^2 \,\rho_\varepsilon(\rmd z),
    \\
    &\mathrm{II}_5 = c_1 \varepsilon \int_{(\partial \mathcal{O})_\eta} \abs{ \nabla h_\varepsilon^\dag }^{2} \,\rho_\varepsilon(\rmd z), &
\end{align*}
and \( c_1 \) is defined as
\begin{equation*}
    c_1:= \sup_{x\in(\partial\mathcal{O})_\eta} \norm*{\mathbf{A}(z) - \mathbf{A}(0)}_\infty.
\end{equation*}

We first claim that the main contribution comes from \( \mathrm{I}_1 \), while all remaining terms are small in comparison. To simplify notation, we introduce  
\begin{equation*}
    A_\varepsilon = \frac{(2\pi\varepsilon)^{d/2}}{Z_\varepsilon}e^{-H/\varepsilon}, \quad \omega_0 = \frac{\mu}{2\pi\sqrt{-\det(\mathbf{H}_0)}}.
\end{equation*}

\begin{proposition}\label{thm:estimates}
    For $K$ sufficiently large (chosen in the proof and independent of $\varepsilon$),
    we establish the following estimates:
    \begin{align*}
        &\mathrm{I}_1 = (1+o_\varepsilon(1)) A_\varepsilon \omega_0,\\
        &\mathrm{II}_{3} = \delta^\alpha \capa_\varepsilon(\calA,\calB) = o_\varepsilon(1)\capa_\varepsilon(\calA,\calB),\\
        &\mathrm{II}_{5} = o_\varepsilon(1) \sqrt{A_\varepsilon \capa_\varepsilon(\calA,\calB)} + o_\varepsilon(1) A_\varepsilon \omega_0,\\
        &\text{All other terms} = o_\varepsilon(1) A_\varepsilon \omega_0. 
    \end{align*}
\end{proposition}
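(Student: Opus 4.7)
My plan is to exploit that $p_\varepsilon$ was constructed (via \cref{thm:hml}) so that $\widetilde{\L}_\varepsilon p_\varepsilon = 0$ on the bridge $\mathcal{S}$. Consequently $\L_{\varepsilon,0} p_\varepsilon - \widetilde{\L}_\varepsilon p_\varepsilon$ retains only the Taylor remainders $W(z) - H - \tfrac12 z^\transpose \mathbf{H}_0 z = O(|z|^3)$ and $\bm{l}(z) - \mathbf{L}_0 z = O(|z|^2)$, which are controlled on $\mathcal{S}$ by $|z|\lesssim \delta$. After convolution with $\varphi_\eta$, $\L_{\varepsilon,0} p_{\varepsilon,\eta}$ additionally concentrates in thin layers around the three discontinuity sets of $p_\varepsilon$: at $\partial_+\mathcal{S}$ (jump up to $1$ on $\mathcal{V}_1$), at $\partial_-\mathcal{S}$ (jump down to $0$ on $\mathcal{V}_0$), and on $(\partial\mathcal{O})_\eta\setminus(\partial_\pm\mathcal{S})_\eta$ (jump to $0$). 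I expect only the first layer to contribute at leading order.

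For the main term $\mathrm{I}_1$, I first invoke \cref{thm:roughbound} to replace $h_\varepsilon^\dag$ by $1$: on $(\partial_+\mathcal{S})_\eta\subset\mathcal{V}_1$ it gives $|1-h_\varepsilon^\dag|\leq C\varepsilon^{-4}\exp(-(H-W)/\varepsilon)$, and the geometric definition of $\mathcal{O}$ yields $H - W \geq c\lambda_1\delta^2$ there, so choosing $K$ large enough makes $\varepsilon^{-4}\varepsilon^{cK^2}=o(1)$. Next, I rewrite $-\int\L_{\varepsilon,0} p_{\varepsilon,\eta}\,\rho_\varepsilon(\rmd z)$ as a surface flux across $\partial_+\mathcal{S}$, parametrize this surface via the eigenbasis of $\mathbf{H}_0$, and carry out the Laplace asymptotics: the $d-1$ stable Gaussian integrals produce $(2\pi\varepsilon)^{(d-1)/2}/\sqrt{\prod_{k\geq 2}\lambda_k}$; the flux $\mathbf{A}_0\nabla p_\varepsilon = C_\varepsilon^{-1}\mathbf{A}_0\bm{v}\exp(-\beta(z\cdot\bm{v})^2/(2\varepsilon))$, combined with \cref{thm:evector}, supplies the factor $\mu/(2\pi\sqrt{-\det\mathbf{H}_0})$. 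Multiplying reproduces $A_\varepsilon \omega_0$.

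The error terms split along expected lines. For $\mathrm{II}_3$, ellipticity of $\widehat{\mathbf{S}}$ together with the adjoint version of \cref{thm:caplower} yields $\varepsilon\int_\Omega|\nabla h_\varepsilon^\dag|^2\rho_\varepsilon(\rmd z)\leq C\capa_\varepsilon$, so $\mathrm{II}_3 \leq C\delta^\alpha \capa_\varepsilon = o_\varepsilon(1)\capa_\varepsilon$. For $\mathrm{II}_5$, H\"older continuity gives $c_1\lesssim\delta^\alpha$; a Cauchy--Schwarz split bounds the first factor by $O(\sqrt{A_\varepsilon\omega_0})$ using the explicit Gaussian form of $\nabla p_{\varepsilon,\eta}$ and the second by $O(\sqrt{\capa_\varepsilon})$, and AM--GM converts the product into the claimed $o(1)\sqrt{A_\varepsilon\capa_\varepsilon} + o(1) A_\varepsilon\omega_0$. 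The remaining pieces $\mathrm{I}_2, \mathrm{I}_3, \mathrm{I}_4, \mathrm{II}_1, \mathrm{II}_2, \mathrm{II}_4$ are all of size $o(1) A_\varepsilon\omega_0$: $\mathrm{I}_2$ inherits an extra $\varepsilon^{-4}\exp(-cK^2\log(1/\varepsilon))$ from the rough bound on $\mathcal{V}_0$; $\mathrm{I}_3$ combines the cubic Taylor remainder $O(\delta^3/\varepsilon)$ with Gaussian mass $O(A_\varepsilon\omega_0)$; $\mathrm{I}_4$ uses $|z\cdot\bm{v}|\gtrsim\delta$ to gain a Gaussian tail $\varepsilon^{cK^2}$; and the three $\mathrm{II}$ pieces follow from the direct Gaussian estimate $\varepsilon\int|\nabla p_{\varepsilon,\eta}|^2\rho_\varepsilon(\rmd z) = O(A_\varepsilon\omega_0)$ multiplied by the $\delta^\alpha$ prefactor.

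The central technical obstacle is the sharp $(1+o_\varepsilon(1))$ asymptotic in $\mathrm{I}_1$, which requires balancing three competing error scales: the $\varepsilon^{-4}$ prefactor in the rough bound on $h_\varepsilon^\dag$, the cubic Taylor error $O(\delta^3/\varepsilon) = O(K^3\varepsilon^{1/2}\log^{3/2}(1/\varepsilon))$, and the H\"older error $O(\delta^\alpha) = O(K^\alpha\varepsilon^{\alpha/2}\log^{\alpha/2}(1/\varepsilon))$. One must take $K$ large enough that $\varepsilon^{cK^2-4}$ vanishes while keeping $\delta$ small, and the mollification parameter $\eta$ is sent to $0$ after the main estimates, so that convolution smoothing does not disturb the leading-order Gaussian computation.
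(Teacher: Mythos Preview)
Your overall architecture matches the paper's, and your treatment of $\mathrm{I}_1$, $\mathrm{I}_2$, $\mathrm{I}_3$, $\mathrm{II}_1$, $\mathrm{II}_2$, $\mathrm{II}_3$ is essentially correct. There is, however, a genuine gap in your handling of $\mathrm{II}_5$ (and, to a lesser extent, $\mathrm{II}_4$ and $\mathrm{I}_4$).

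You assert that H\"older continuity gives $c_1\lesssim\delta^\alpha$. This is false: $c_1=\sup_{z\in(\partial\mathcal{O})_\eta}\|\mathbf{A}(z)-\mathbf{A}(0)\|_\infty$, and $\partial\mathcal{O}=\{W=H+\lambda_1\delta^2/4\}$ is a \emph{global} level set, not a $\delta$-neighborhood of the saddle. Points on $\partial\mathcal{O}$ can be at distance $O(1)$ from the origin, so $c_1$ is merely $O(1)$, not $o(1)$. Moreover, $\mathrm{II}_5=c_1\varepsilon\int_{(\partial\mathcal{O})_\eta}|\nabla h_\varepsilon^\dag|^2\,\rho_\varepsilon(\rmd z)$ contains no $\nabla p_{\varepsilon,\eta}$, so there is nothing to Cauchy--Schwarz against; the Young split into $\mathrm{II}_4+\mathrm{II}_5$ has already been performed. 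The only global information on $\nabla h_\varepsilon^\dag$ is $\varepsilon\int_\Omega|\nabla h_\varepsilon^\dag|^2\rho_\varepsilon\lesssim\capa_\varepsilon$, which yields only $\mathrm{II}_5\lesssim c_1\capa_\varepsilon=O(\capa_\varepsilon)$---useless.

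The missing idea is a \emph{local Caccioppoli estimate}. The paper tests the equation $\L_\varepsilon^\dag h_\varepsilon^\dag=0$ against $\psi=h_\varepsilon^\dag\phi$, where $\phi$ is a cutoff with $\phi\equiv1$ on $(\partial\mathcal{O})_\eta$, $\phi\equiv0$ outside $(\partial\mathcal{O})_{2\eta}$, and $|\nabla\phi|\lesssim1/\eta$. Expanding gives
\[
\varepsilon\int_{(\partial\mathcal{O})_\eta}|\nabla h_\varepsilon^\dag|^2\,\rho_\varepsilon(\rmd z)
\leq c\sqrt{\capa_\varepsilon}\Bigl(\varepsilon\int_{(\partial\mathcal{O})_{2\eta}}|\nabla\phi|^2\,\rho_\varepsilon(\rmd z)\Bigr)^{1/2}
+\int_{(\partial\mathcal{O})_{2\eta}}|\bm{l}\cdot\nabla\phi|\,\rho_\varepsilon(\rmd z).
\]
The crucial point is that on $\mathrm{supp}(\nabla\phi)\subset(\partial\mathcal{O})_{2\eta}$ one has $W\geq H+c\lambda_1\delta^2$, hence $e^{-W/\varepsilon}\leq e^{-H/\varepsilon}\varepsilon^{c_3K^2}$, and this weight smallness---not any smallness of $c_1$---produces the required $o_\varepsilon(1)\sqrt{A_\varepsilon\capa_\varepsilon}+o_\varepsilon(1)A_\varepsilon\omega_0$. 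The same mechanism (smallness of $\rho_\varepsilon$ on $(\partial\mathcal{O})_\eta$, not a Gaussian tail in $z\cdot\bm{v}$) is what actually controls $\mathrm{II}_4$ and $\mathrm{I}_4$.
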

The proof of these estimates is postponed to \cref{sec:estimate1,sec:estimate2}.

Finally, combining the above estimates, we obtain  
\begin{equation*}
    (1+o_\varepsilon(1))\capa_\varepsilon = (1+o_\varepsilon(1))A_\varepsilon \omega_0 + o_\varepsilon(1) \sqrt{A_\varepsilon \capa_\varepsilon}.
\end{equation*}
Solving this equation (see e.g.~\cite{LS22}), we conclude  
\begin{equation*}
    \capa_\varepsilon = (1 + o_\varepsilon(1)) A_\varepsilon \omega_0,
\end{equation*}
which proves \cref{thm:capestimate}.
\end{delayedproof}

\subsection{Estimation of I}\label{sec:estimate1}
We provide a detailed estimate for \(\mathrm{I}_1\). 
First, we expand the term using integration by parts on convolutions and use linearization, splitting it further into five terms:
\begin{equation*}
    \mathrm{I}_1 = -\int_{(\partial_+\mathcal{S})_{\eta}} h_\varepsilon^\dag \L_{\varepsilon,0} p_{\varepsilon,\eta} \,\rho_\varepsilon(\rmd z) = 
    \mathrm{I}_{11} + \cdots + \mathrm{I}_{15},
\end{equation*}
where
\begin{align*}
    &\mathrm{I}_{11} := \int_{(\partial_+\mathcal{S})_\eta} h_\varepsilon^\dag \left(\int_{B_\eta(z)\cap \partial\mathcal{S}} \left(\varepsilon \mathbf{A}_0 \nabla p_\varepsilon(y) + (p_\varepsilon(y)- 1) \mathbf{L}_0 y\right)\cdot\bm{n}_{\mathcal{S}} \varphi_\eta(x-y) \,\sigma(\rmd y) \right)\rho_\varepsilon(\rmd z),
    \\
    &\mathrm{I}_{12} := -\int_{(\partial_+\mathcal{S})_\eta} h_\varepsilon^\dag \left(\int_{B_\eta(z)\cap \mathcal{S}} \widetilde{\L}_{\varepsilon} p_\varepsilon(y) \varphi_\eta(x-y) \,\rmd y \right)\rho_\varepsilon(\rmd z),
    \\
    &\mathrm{I}_{13} := \int_{(\partial_+\mathcal{S})_\eta} h_\varepsilon^\dag \left(\int_{B_\eta(z)\cap \partial\mathcal{S}} \varepsilon (p_\varepsilon(y) - 1) \mathbf{A}_0 \bm{n}_{\mathcal{S}}\cdot \nabla_y \varphi_\eta(x-y) \,\sigma(\rmd y) \right)\rho_\varepsilon(\rmd z),
    \\
    &\mathrm{I}_{14} := -O(\delta^2 + \eta) \int_{(\partial_+\mathcal{S})_\eta} h_\varepsilon^\dag \left(\int_{B_\eta(z)\cap \mathcal{S}} \abs{\nabla p_\varepsilon(y)} \varphi_\eta(x-y) \,\rmd y \right)\rho_\varepsilon(\rmd z),
    \\
    &\mathrm{I}_{15} := \int_{(\partial_+\mathcal{S})_\eta} h_\varepsilon^\dag \mathbf{A}_0^\transpose \nabla W(z)  \left(\int_{B_\eta(z)\cap \partial\mathcal{S}} (p_\varepsilon(y) - 1) \varphi_\eta(x-y) \bm{n}_{\mathcal{S}} \,\sigma(\rmd y) \right)\rho_\varepsilon(\rmd z).
\end{align*}

Secondly, by the construction of \(p_\varepsilon\) in \cref{eq:peps}, we obtain
\begin{equation}\label{eq:I12}
    \mathrm{I}_{12} = 0.
\end{equation}

Next, for the term \(\mathrm{I}_{14}\), using \(0\leq h_\varepsilon^\dag \leq 1\) and following the proof of \cite[Lemma 8.7]{LMS19}, we derive
\begin{equation}\label{eq:I14}
    \abs{\mathrm{I}_{14}} \leq O(\delta^2) \int_{\mathcal{S}} \abs{\nabla p_\varepsilon} \rho_\varepsilon(\rmd z) = o_\varepsilon(1) A_\varepsilon.
\end{equation}

For \(\mathrm{I}_{13}\) and \(\mathrm{I}_{15}\), using \(0\leq h_\varepsilon^\dag \leq 1\), the boundedness of coefficients, and the result from \cite[Assertion 8.A]{LMS19}, which states that for \(y\in\partial_+\mathcal{S}\), we have 
\begin{equation*}
    \abs{p_\varepsilon(y)-1}  \leq \varepsilon^{c_2 K^2 } e^{-H/\varepsilon}
\end{equation*}
for some positive constant \(c_2\), where \(K\) can be chosen sufficiently large, we conclude that
\begin{equation}\label{eq:I1315}
    \mathrm{I}_{13}, \mathrm{I}_{15} = o_\varepsilon(1) A_\varepsilon.
\end{equation}

The dominant contribution comes from the surface integral \(\mathrm{I}_{11}\).
Using the estimate \(1 - h_\varepsilon^\dag(y) \leq o_\varepsilon(1)\) for \(y\in\partial_+\mathcal{S}\) from \cref{thm:roughbound}, the construction of \(p_\varepsilon\) in \cref{eq:peps}, the matrix structure and eigenvalues from \cref{thm:hml,thm:evector,eq:beta}, and following the detailed steps in \cite[Prop.~8.6]{LS22}, we obtain 
\begin{equation}\label{eq:I11}
    \mathrm{I}_{11} 
    = (1+o_\varepsilon(1)) \frac{1}{Z_\varepsilon} \frac{(2\pi\varepsilon)^{d/2}}{2\pi}\frac{\mu}{\sqrt{-\det (\mathbf{H}_{0})}}e^{-H/\varepsilon} = (1+o_\varepsilon(1)) A_\varepsilon \omega_0.
\end{equation}
Combining the estimates \cref{eq:I12,eq:I14,eq:I1315,eq:I11}, we conclude
\begin{equation*}
    \mathrm{I}_{1} = (1+o_\varepsilon(1)) A_\varepsilon \omega_0.
\end{equation*}

The fact that remaining terms \(\mathrm{I}_2,\mathrm{I}_3,\mathrm{I}_4\) are small compared to $\mathrm{I}_1$ follows from similar calculations as those for \(\mathrm{I}_1\). 
Specifically:
\begin{itemize}
    \item $\mathrm{I}_2$ is small because $h_\varepsilon^\dag$ is small by \cref{thm:roughbound};
    \item $\mathrm{I}_3$ is small because $\int_{\mathcal{S}}\widetilde{\L}_{\varepsilon} p_{\varepsilon}\,\rho_\varepsilon(\rmd z)$ is small, according to \cite[Lemma 8.7]{LMS19};
    \item $\mathrm{I}_4$ is small because the weight $\exp(-W/\varepsilon)$ is sufficiently small on $\partial\mathcal{O}$.
\end{itemize}
Combining these estimates yields the final result:
\begin{equation*}
    \mathrm{I}_2,\mathrm{I}_3,\mathrm{I}_4 = o_\varepsilon(1)A_\varepsilon \omega_0.
\end{equation*}
This completes the estimation of \(\mathrm{I}\).

\subsection{Estimation of II}\label{sec:estimate2}
First, from \cite[Lemma 8.4]{LMS19}, we obtain
\begin{equation*}
    \varepsilon \int_{\mathcal{S}} \nabla p_\varepsilon\cdot \mathbf{A}_0 \nabla p_\varepsilon \,\rho_\varepsilon(\rmd z) = (1+o_\varepsilon(1)) A_\varepsilon \omega_0.
\end{equation*}
Combining this with the calculations for mollification in \cref{sec:estimate1}, we deduce
\begin{equation}\label{eq:II1}
    \mathrm{II}_1 = \delta^\alpha (1+o_\varepsilon(1)) A_\varepsilon \omega_0.
\end{equation}
Next, using the smallness of \(\nabla p_{\varepsilon}\) on \(\partial\mathcal{S}_{\pm}\) and the smallness of \(\exp(-W(z)/\varepsilon)\) on \(\partial\mathcal{O}\), we obtain
\begin{equation}\label{eq:II24}
    \mathrm{II}_2, \mathrm{II}_4 = o_\varepsilon(1)A_\varepsilon \omega_0.
\end{equation}
Furthermore, from \cref{thm:variationalcap} and \cref{assump:elliptic}, it follows that
\begin{equation}\label{eq:II3}
    \mathrm{II}_3 \leq \delta^\alpha \capa_\varepsilon(\calA,\calB).
\end{equation}

It remains to estimate \(\mathrm{II}_5\). The key idea is that near \(\partial\mathcal{O}\), the weight \(\exp(-U(z)/\varepsilon)\) is small, which allows us to control the energy of \(\nabla h_\varepsilon^\dag\). To achieve this, we apply a standard energy estimate technique from regularity theory.

Since \(h_\varepsilon^\dag\) solves \cref{eq:equilibriumproblem}, it satisfies the weak formulation that for any test function \(\psi \in C_0^\infty(\Omega)\):
\begin{equation*}
    \int_{\Omega} \left(\varepsilon\mathbf{A}\nabla h_\varepsilon^\dag \cdot\nabla \psi - \psi \bm{l}\nabla h_\varepsilon^\dag  \right)\rho_\varepsilon(\rmd z) = 0,
\end{equation*} 
where \(\Omega = (\overline{\calA\cup\calB})^\complement\).
Let \(\phi\) be a cutoff function such that \(\phi = 1\) on \((\partial\mathcal{O})_{\eta}\) and \(\phi = 0\) outside \((\partial\mathcal{O})_{2\eta}\), with \(\abs{\nabla \phi}\leq c/\eta\) on \((\partial\mathcal{O})_{2\eta} \setminus (\partial\mathcal{O})_\eta\) for some constant \(c\).
Setting \(\psi = h_\varepsilon^\dag \phi\) in the weak formulation and expanding, we get
\begin{align*}
    0 &= \int_{\Omega} \left(\varepsilon\mathbf{A}\nabla h_\varepsilon^\dag \cdot\nabla (h_\varepsilon^\dag\phi) - (h_\varepsilon^\dag\phi) \bm{l}\nabla h_\varepsilon^\dag  \right)\rho_\varepsilon(\rmd z) \\
    &= \int_{\Omega} \varepsilon(\mathbf{A}\nabla h_\varepsilon^\dag \cdot\nabla h_\varepsilon^\dag)\phi \,\rho_\varepsilon(\rmd z) + \int_{\Omega} \varepsilon(\mathbf{A}\nabla h_\varepsilon^\dag \cdot\nabla \phi) h_\varepsilon^\dag \,\rho_\varepsilon(\rmd z) 
    - \int_{\Omega} \phi \bm{l}\nabla ((h_\varepsilon^\dag)^2 /2)  e^{U/\varepsilon}\,\rho_\varepsilon(\rmd z).
\end{align*}

For the second term, using $0\leq h_\varepsilon^\dag\leq 1$ and applying Hölder's inequality yields for some constant $c_2,c_3>0$ that
\begin{align*}
    \abs*{\int_{\Omega} \varepsilon (\mathbf{A}\nabla h_\varepsilon^\dag \cdot\nabla \phi)h_\varepsilon^\dag\,\rho_\varepsilon(\rmd z)} 
    &\leq c_2 \varepsilon \left(\int_{(\partial\mathcal{O})_{2\eta}} \abs{\nabla h_\varepsilon^\dag}\,\rho_\varepsilon(\rmd z)\right)^{1/2} 
    \left(\int_{(\partial\mathcal{O})_{2\eta}}\abs{\nabla \phi}^2\,\rho_\varepsilon(\rmd z)\right)^{1/2} \\
    &\leq c_2\sqrt{\capa_\varepsilon(\calA,\calB)} \left(\varepsilon^{c_3 K^2} e^{-H/\varepsilon}\right)^{1/2}.
\end{align*}

For the last term, integrating by parts (with no boundary contribution since \(\phi\) is compactly supported), we obtain
\begin{equation*}
    \abs*{\int_{\Omega} \phi \bm{l} \nabla((h_\varepsilon^\dag)^2/2) \,\rho_\varepsilon(\rmd z)} 
    \leq \int_{(\partial\mathcal{O})_{2\eta}} \abs{\bm{l}\cdot\nabla \phi}/2\,\rho_\varepsilon(\rmd z) 
    \leq c_2 \varepsilon^{c_3 K^2} e^{-H/\varepsilon}. 
\end{equation*}

Combining the above two estimates, we conclude
\begin{align*}
    \varepsilon\int_{(\partial\mathcal{O})_\eta} \abs{\nabla h_\varepsilon^\dag}\,\rho_\varepsilon(\rmd z) 
    &\leq \int_{(\partial\mathcal{O})_{\eta}} \varepsilon \mathbf{A} \nabla h_\varepsilon^\dag \cdot\nabla h_\varepsilon^\dag \,\rho_\varepsilon(\rmd z) 
    \\
    &\leq c_2 \sqrt{\capa_\varepsilon(\calA,\calB)} \left(\varepsilon^{c_3 K^2} e^{-H/\varepsilon}\right)^{1/2} + c_2 \varepsilon^{c_3 K^2} e^{-H/\varepsilon}.
\end{align*}
Choosing \(K\) sufficiently large, we obtain
\begin{equation}\label{eq:II5}
    \mathrm{II}_5 = o_\varepsilon(1)\sqrt{A_\varepsilon \capa_\varepsilon(\calA,\calB)} + o_\varepsilon(1) A_\varepsilon \omega_0.
\end{equation}

Collecting all the estimates \cref{eq:I11,eq:I12,eq:I1315,eq:I14,eq:II1,eq:II24,eq:II3,eq:II5} for \(\mathrm{I}\) and \(\mathrm{II}\) completes the proof of \cref{thm:estimates}.

\tocless{\section*{Acknowledgement}}
M.~Hou was supported by [Swedish Research Council dnr: 2019-04098].
The author wishes to thank Benny Avelin for fruitful discussions and revising the paper.

\printbibliography

\end{document}